\newcommand {\R} {\mathbb R}
\newcommand {\B} {\mathbb B}
\newcommand{\al}{\alpha}
\newcommand{\be}{\beta}
\newcommand{\de}{\delta}
\newcommand{\eps}{\varepsilon}
\newcommand{\bx}{\bar x}
\newcommand{\by}{\bar y}
\newcommand {\bd} {{\rm bd}\,}
\newcommand{\iv}{^{-1} }
\newcommand{\tr}{{\rm tr}[A,B](\bx)}
\newcommand{\str}{{\rm str}[A,B](\bx)}
\newcommand{\itr}{{\rm itr}[A,B](\bx)}
\newcommand{\itrd}[1]{{\rm itr}_{#1}[A,B](\bx)}
\newcommand{\strr}{{\rm str}'[A,B](\bx)}
\newcommand{\rg}{{\rm rg}[F](\bx,\by)}
\newcommand{\srg}{{\rm srg}[F](\bx,\by)}
\newcommand {\Int} {{\rm int}\,}
\newcommand {\ri} {{\rm ri}\,}
\newcommand{\ang}[1]{\left\langle #1 \right\rangle}
\newcommand{\qdtx}[1]{\quad\mbox{#1}\quad}
\newcommand\xqed{%
  \leavevmode\unskip\penalty9999 \hbox{}\nobreak\hfill
  \quad\hbox{$\triangle$}}
\def\lsc{lower semicontinuous}
\def\LHS{left-hand side}
\def\RHS{right-hand side}
\def\SVM{set-valued mapping}
\newcommand{\set}[2]{\left\{#1\,\left|\,#2\right.\right\}}
\newcommand{\norm}[1]{\left\|#1\right\|}
\newcommand {\Limsup} {\mathop{{\rm Lim\,sup}\,}}
\DeclareMathOperator{\cone}{{cone}}
\DeclareMathOperator{\dom}{{dom\,}}
\DeclareMathOperator{\gph}{gph}
\newcommand{\sd}{\partial}
\title{About subtransversality of collections of sets
\thanks{AYK was supported by Australian Research Council, project DP160100854. DRL was supported in part by German Israeli Foundation Grant G-1253-304.6 and Deutsche Forschungsgemeinschaft Research Training Grant 2088 TP-B5.
NHT was supported by German Israeli Foundation
Grant G-1253-304.6
}}
\author{
Alexander Y. Kruger
\and
D. Russell Luke
\and
Nguyen~H.~Thao
}
\institute{Alexander Y. Kruger \at
Centre for Informatics and Applied Optimization, Federation University Australia, POB 663, Ballarat, VIC 3350, Australia \\
\email{a.kruger@federation.edu.au}
\and
D. Russell Luke \at
Institut f\"ur Numerische und Angewandte Mathematik, Universit\"at G\"ottingen,
37083 G\"ottingen, Germany\\
\email{r.luke@math.uni-goettingen.de}
\and
Nguyen H. Thao \at
Institut f\"ur Numerische und Angewandte Mathematik, Universit\"at G\"ottingen,
37083 G\"ottingen, Germany.
Department of Mathematics, Teacher College, Can Tho University, Can Tho City, Vietnam\\
\email{h.nguyen@math.uni-goettingen.de,\ nhthao@ctu.edu.vn}
}
\date{Received: date / Accepted: date}
\begin{document}

\maketitle

\begin{abstract}
We provide dual sufficient conditions for subtransversality of collections of sets in an Asplund space setting.
For the convex case, we formulate a necessary and sufficient dual criterion of subtransversality in
general Banach spaces.  Our more general results suggest an intermediate notion of subtransversality,
what we call {\em weak intrinsic subtransversality}, which lies between intrinsic transversality
and subtransversality in Asplund spaces.

\keywords{Metric regularity \and Metric subregularity \and Transversality \and Subtransversality \and
Intrinsic transversality \and Error bound \and Normal  cone \and Alternating projections \and Linear convergence}
%\PACS{PACS code1 \and PACS code2 \and more}

\subclass{Primary 49J53 \and 65K10 \and Secondary 49K40 \and 49M05 \and 49M37 \and 65K05 \and 90C30}
\end{abstract}
% \tableofcontents
\section{Introduction}

We study ways several sets in a normed linear space can be arranged in a `regular' way near a point in their intersection.
Such \emph{regular intersection} or, in other words, \emph{transversality} properties are crucial for the validity of \emph{qualification conditions} in optimization as well as subdifferential, normal cone and coderivative calculus, and convergence analysis of computational algorithms.

For brevity, in this article we consider the case of two nonempty sets $A$ and $B$.
The extension of the definitions and characterizations of transversality properties to the case of any finite collection of $n$ sets ($n>1$) does not require much effort (cf. \cite{Kru05.1,Kru06.1,Kru09.1,KruTha13.1,KruTha15}).
The sets are assumed to have a common point $\bar{x}\in A\cap B$.
We shall use the notation $\{A,B\}$ when referring to the pair of two sets $A$ and $B$ as a single object.

The origins of the concept of regular arrangement of sets in space can be traced back to that of \emph{transversality} in differential geometry which deals of course with smooth manifolds (see, for instance, \cite{GuiPol74,Hir76}).
It is motivated by the problem of determining when the intersection of two smooth manifolds is also a smooth manifold near some point in the intersection.
%A sufficient condition for this to happen is
This is true when the collection $\{A,B\}$ of smooth manifolds is
\emph{transversal} at $\bx\in A\cap B $, that is, the sum of the \emph{tangent spaces} to $A$ and $B$ at $\bx$ generates the whole space.
%This notion has been used in \cite{LewMal08,GuiPol74,Iof16}.
%Under this assumption, $A\cap B$ is a smooth manifold around $\bx$ and the tangent cone to the intersection is equal to the intersection of the tangent cones at $x$ and the normal cones to the sets at $x$ have only the origin in common (cf. \cite{LewMal08,GuiPol74,Iof16}).
%The tangent cone intersection property is only a necessary condition and is in general weaker than the condition on the normal cones.
In finite dimensions,
%\todo{In infinite?}
this property can be equivalently characterized in dual terms:
\begin{align}\label{1}
N_{A}(\bar{x}) &\cap N_{B}(\bar{x})= \{0\},
\end{align}
where $N_{A}(\bar{x})$ and $N_{B}(\bar{x})$ are the \emph{normal spaces} (i.e., orthogonal complements to the tangent spaces) to $A$ and $B$, respectively, at the point $\bx$.

In the current article we study arbitrary (not necessarily smooth or convex) sets in a normed linear space and focus on a particular transversality concept, called \emph{subtransversality} which has emerged as a key -- by some estimates \emph{the key} -- notion in the analysis of convergence of iterative methods for solving feasibility problems.
Two equivalent primal space definitions and some qualitative and quantitative characterizations of this property are given in Section~\ref{s:subtrans}, where we also compare subtransversality with a more robust property called simply \emph{transversality} being a generalization of the discussed above corresponding property from differential geometry.

The properties of transversality and subtransversality (also known under many other names) of pairs of sets
%are crucial for the convergence analysis of alternating projections for consistent feasibility problems.
%They
correspond directly to \emph{metric regularity} and \emph{metric subregularity} of \SVM s, respectively; see Propositions~\ref{P2}, \ref{P3} and \ref{P2+} below.
This means, in particular, that characterizations of regularity properties of \SVM s can be translated into characterizations of the corresponding transversality properties of pairs of sets and vice versa.
In the current article, when proving characterizations of the subtransversality property of pairs of sets, we follow the sequence proposed in \cite{Kru15} when deducing metric subregularity characterizations for \SVM s.
Characterizations of subtransversality can also be obtained by direct translation of the corresponding statements from \cite{Kru15} using Propositions~\ref{P2} or \ref{P2+}.
We avoid doing this here, first, to keep a self-contained mostly geometrical presentation, and second, because the developments in the current article show that some statements in \cite{Kru15} are formulated not in the strongest form and can be improved.
In fact, the characterizations of subtransversality derived in the current article can be used to improve the corresponding statements in \cite{Kru15}.

In Section \ref{s:dual} we present dual sufficient conditions for subtransversality in Asplund spaces (Theorem~\ref{T1}) as well as a necessary and sufficient criterion for subtransversality of a pair of convex sets in a general Banach space (Theorem~\ref{T2}), and compare them with the corresponding criterion for transversality (Theorem~\ref{T0}).
All three assertions are in a sense analogues (Theorem~\ref{T0} being a direct extension) of the classical criterion \eqref{1}.
Theorem~\ref{T1} extends and strengthens the corresponding assertion announced in the recent paper \cite{KruLukTha}.
Along the way we successively establish several sufficient (and some also necessary) primal and dual conditions of subtransversality and also uncover a new notion of transversality, which we call \emph{weak intrinsic transversality}, that lies somewhere between transversality and subtransversality.
This property as well as a finer property of \emph{intrinsic transversality} (the name is borrowed from \cite{DruIofLew15.2}) are briefly discussed in Section~\ref{s:intrinsic trans}.
A more detailed study of intrinsic transversality and weak intrinsic transversality and their comparison with the corresponding finite dimensional property introduced in \cite{DruIofLew15.2} are going to appear in the forthcoming paper
\cite{Kru}.

\subsection{Notation and preliminaries}

Given a normed linear space $X$,
its topological dual is denoted $X^*$, while $\langle\cdot,\cdot\rangle$ denotes the bilinear form defining the pairing between the spaces.
$\B$ and $\B^*$ stand for the closed unit balls in $X$ and $X^*$, respectively, while $\B_\de(x)$ denotes the open ball with centre at $x\in X$ and radius $\de>0$.
Given a set $A$ in a normed linear space, its interior and boundary are denoted $\Int A$ and $\bd A$, respectively, while $\cone A$ denotes the cone generated by $A$: $\cone A:=\{ta\mid a\in A, t\ge0\}$.
$d_A(x)$ stands for the distance from a point $x$ to a set $A$.
Given an $\al\in\R_\infty:=\R\cup\{+\infty\}$, $\al_+$ denotes its positive part: $\al_+:=\max\{\al,0\}$.
$\mathbb{N}$ is a set of positive integers.

Dual characterizations of transversality and subtransversality properties involve dual space objects -- \emph{normal cones}.
Given a subset $A$ of a normed linear space $X$ and a point $\bx\in A$, the \emph{Fr\'echet normal cone} to $A$ at $\bx$ is defined as follows:
\begin{gather}\label{NC1}
N_{A}(\bx):= \left\{x^*\in X^*\mid \limsup_{a\to\bx,\,a\in A\setminus\{\bx\}} \frac {\langle x^*,a-\bx \rangle}{\|a-\bx\|} \leq 0 \right\}.
\end{gather}
It is a nonempty weak$^*$ closed convex cone, often trivial
%\blue{for $A$ not locally convex at $\bx$}
($N_{A}(\bx)=\{0\}$).
Similarly,
given a function $f:X\to\R_\infty:=\R\cup\{+\infty\}$ and a point $\bx\in\dom f$, the \emph{Fr\'echet subdifferential} of $f$ at $\bx$ is defined as
\begin{gather}\label{sd}
\sd f(\bx):= \left\{x^*\in X^*\mid \liminf_{x\to\bx,\,x\ne \bx} \frac {f(x)-f(\bx)-\langle x^*,x-\bx \rangle}{\|x-\bx\|} \geq 0 \right\}.
\end{gather}
It is a weak$^*$ closed convex set, often empty.
%\blue{for $f$ not locally convex at $\bx$}.
Using Fr\'echet normal cones, one can define more robust (and in general nonconvex) \emph{limiting normal cones}.
If $\dim X<\infty$, the definition of the \emph{limiting normal cone} to $A$ at $\bx$
takes the following form:
\begin{gather}\label{NC3}
\overline{N}_{A}(\bar x):= \Limsup_{a\to\bx,\,a\in A}N_{A}(a):=\left\{x^*=\lim_{k\to\infty}x^*_k\mid x^*_k\in N_{A}(a_k),\;a_k\in A,\;a_k\to\bx\right\}.
\end{gather}
If $X$ is a Euclidian space and $A$ is closed, the Fr\'echet normal cones in definition \eqref{NC3} can be replaced by the \emph{proximal} ones:
\begin{gather}\label{NC2}
N_{A}^p(\bx):=\cone\left(P_A^{-1}(\bx)-\bx\right).
\end{gather}
Here $P_A$ is the \emph{projection} mapping:
\begin{gather*}
P_{A}(x):=\set{a\in A}{\|x-a\|=d_A(x)},\quad x\in X.
\end{gather*}
It is easy to verify that
$N_{A}^p(\bx)\subset N_{A}(\bx)$,
and $\overline{N}_{A}(\bar x)\ne\{0\}$ if and only if $\bx\in\bd A$.
Unlike \eqref{NC1} and \eqref{NC2}, the cone \eqref{NC3} can be nonconvex.

If $A$ is a convex set, then all three cones \eqref{NC1}, \eqref{NC3} and \eqref{NC2} coincide and reduce to the normal cone in the sense of convex analysis:
\begin{gather*}\label{CNC}
N_{A}(\bx):= \left\{x^*\in X^*\mid \langle x^*,a-\bx \rangle \leq 0 \qdtx{for all} a\in A\right\}.
\end{gather*}

The proofs of the main results rely on two fundamental results of variational analysis: the \emph{Ekeland variational principle} (Ekeland \cite{Eke74}; cf., e.g., \cite[Theorem~2.1]{Kru03.1}, \cite[Theorem 2.26]{Mor06.1}, \cite[Theorem~4B.5]{DonRoc14}) and several kinds of \emph{subdifferential sum rules}.
Below we provide these results for completeness.

\begin{lemma}[Ekeland variational principle] \label{Ek}
Suppose $X$ is a complete metric space, $f:X\to\R_\infty$ is lower semicontinuous and bounded from below, $\varepsilon>0, \lambda>0$. If
$$
f(\bx)<\inf_X f + \varepsilon,
$$
then there exists an $\hat x\in X$ such that

{\rm (a)} $d(\hat x,\bx)<\lambda $,

{\rm (b)} $f(\hat x)\le f(\bx)$,

{\rm (c)} $f(x)+(\varepsilon/\lambda)d(x,\hat x)\ge f(\hat x)$ for all $x\in X$.
\end{lemma}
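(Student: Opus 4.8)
The plan is to realize $\hat x$ as a minimal element of a suitable partial order on $X$ and to produce that minimal element by an explicit iteration that exploits completeness. Writing $k:=\varepsilon/\lambda$, I would define a relation on $X$ by declaring $y\preceq x$ whenever $f(y)+k\,d(x,y)\le f(x)$. A direct check shows this is a partial order: reflexivity is immediate, antisymmetry follows by adding the two defining inequalities (which forces $d(x,y)=0$), and transitivity follows from the triangle inequality for $d$. The point of this reformulation is that conclusion (c) is precisely the statement that $\hat x$ has no strict predecessor, i.e.\ that $\hat x$ is $\preceq$-minimal; so the whole proof reduces to constructing a $\preceq$-minimal point lying below $\bx$. (An alternative, non-constructive route would apply Zorn's lemma to the chains below $\bx$, but I prefer the iteration since it makes the role of completeness transparent.)

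First I would set $x_0:=\bx$ and, recursively, given $x_n$, consider the lower section $S_n:=\{x\in X\mid x\preceq x_n\}$, which is nonempty (it contains $x_n$) and, by lower semicontinuity of $f$, closed. I would then choose $x_{n+1}\in S_n$ that nearly minimizes $f$ over $S_n$, say with $f(x_{n+1})\le \inf_{S_n}f+\eta_n$ for some fixed sequence $\eta_n\downarrow 0$. Transitivity gives $S_{n+1}\subseteq S_n$, while the defining inequality of $x_{n+1}\preceq x_n$ yields $k\,d(x_n,x_{n+1})\le f(x_n)-f(x_{n+1})$. Since $(f(x_n))$ is nonincreasing and bounded below (as $f$ is bounded below), it converges, so the right-hand sides are summable; hence $\sum_n d(x_n,x_{n+1})<\infty$, the sequence $(x_n)$ is Cauchy, and by completeness of $X$ it converges to some $\hat x$.

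Next I would verify that $\hat x$ inherits the order relations. For each fixed $n$ and all $m\ge n$ one has $x_m\preceq x_n$, and passing to the limit $m\to\infty$—using continuity of $d$ together with $f(\hat x)\le\liminf_m f(x_m)$—gives $\hat x\preceq x_n$. In particular $\hat x\preceq x_0=\bx$, which immediately yields (b). For (a), I would combine $f(\hat x)+k\,d(\bx,\hat x)\le f(\bx)$ with the hypothesis $f(\hat x)\ge\inf_X f>f(\bx)-\varepsilon$ to obtain $k\,d(\bx,\hat x)<\varepsilon$, that is, $d(\bx,\hat x)<\varepsilon/k=\lambda$.

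The crux is (c), the minimality, and I expect this to be the main obstacle. Suppose $x\preceq\hat x$. Since $\hat x\preceq x_n$ for every $n$, transitivity gives $x\in S_n$ for all $n$, whence $f(x)\ge\inf_{S_n}f\ge f(x_{n+1})-\eta_n$. On the other hand $x\preceq\hat x\preceq x_{n+1}$ gives $f(x)+k\,d(x_{n+1},x)\le f(x_{n+1})$, and combining the two estimates yields $k\,d(x_{n+1},x)\le\eta_n\to 0$. Thus $x_{n+1}\to x$; but $x_{n+1}\to\hat x$, so $x=\hat x$, and $\hat x$ is $\preceq$-minimal, which is exactly assertion (c). The two delicate points are the passage of the order to the limit—this is where lower semicontinuity must be invoked in precisely the right place—and this minimality argument, for which the near-minimizing choice with $\eta_n\downarrow 0$ is indispensable: without it one controls only that $\hat x$ lies below $\bx$, not that it is minimal.
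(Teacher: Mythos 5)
The paper does not prove this lemma at all: it states it as a classical result (Ekeland's variational principle) and cites the literature for its proof, so there is no in-paper argument to compare against. Your proof --- the ordering $y\preceq x$ defined by $f(y)+(\varepsilon/\lambda)\,d(x,y)\le f(x)$, the near-minimizing iteration over the lower sections $S_n$, and the limit passage using completeness, continuity of $d$ and lower semicontinuity of $f$ --- is the standard classical argument and is correct. Two cosmetic remarks: antisymmetry of $\preceq$ can fail at points where $f=+\infty$ (two distinct such points are mutually comparable), but this is harmless since every point your construction touches lies in $\left\{x\mid x\preceq\bx\right\}\subset\dom f$ where $f$ is finite; and minimality of $\hat x$ is in fact slightly stronger than conclusion (c) (which tolerates equality), so proving minimality, as you do, suffices.
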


\begin{lemma}[Subdifferential sum rules] \label{SR}
Suppose $X$ is a normed linear space, $f_1,f_2:X\to\R_\infty$, and $\bx\in\dom f_1\cap\dom f_2$.

{\rm (i) \bf Fuzzy sum rule}. Suppose $X$ is Asplund,
$f_1$ is Lipschitz continuous and
$f_2$
is lower semicontinuous in a neighbourhood of $\bar x$.
Then, for any $\varepsilon>0$, there exist $x_1,x_2\in X$ with $\|x_i-\bar x\|<\varepsilon$, $|f_i(x_i)-f_i(\bar x)|<\varepsilon$ $(i=1,2)$, such that
$$
\partial (f_1+f_2) (\bar x) \subset \partial f_1(x_1) +\partial f_2(x_2) + \varepsilon\B^\ast.
$$

%{\rm (ii) \bf Differentiable sum rule}. Suppose
%$f_1$ is Fr\'echet differentiable at $\bx$.
%Then,
%$$
%\partial (f_1+f_2) (\bar x) = \nabla f_1(\bx) +\partial f_2(\bx).
%$$

{\rm (ii) \bf Convex sum rule}. Suppose
$f_1$ and $f_2$ are convex and $f_1$ is continuous at a point in $\dom f_2$.
Then
$$
\partial (f_1+f_2) (\bar x) = \sd f_1(\bx) +\partial f_2(\bx).
$$
\end{lemma}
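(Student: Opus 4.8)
The statement bundles two results of very different depth, so I would treat them separately. Part (ii), the convex sum rule, is the Moreau--Rockafellar theorem and admits a short self-contained proof by Hahn--Banach separation. Part (i), the fuzzy sum rule, is the deep characterisation of Asplund spaces; I would either derive it from the Ekeland principle of Lemma~\ref{Ek} together with a smooth variational principle and separable reduction, or simply invoke the references already cited, since a from-scratch proof would essentially reproduce a chapter of Asplund-space theory.

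For the convex sum rule (ii), the inclusion $\partial f_1(\bx)+\partial f_2(\bx)\subset\partial(f_1+f_2)(\bx)$ is immediate: adding the two convex subgradient inequalities at $\bx$ produces a subgradient of the sum, and this direction needs no qualification condition. The content is the reverse inclusion. Given $x^*\in\partial(f_1+f_2)(\bx)$, convexity makes the subgradient inequality global, so the function $x\mapsto f_1(x)+f_2(x)-\ang{x^*,x-\bx}$ attains its minimum over $X$ at $\bx$. I would then separate, in $X\times\R$, the shifted epigraph
\[
C_1=\{(x,t)\mid t\ge f_1(x)-\ang{x^*,x-\bx}-f_1(\bx)\}
\]
from the hypograph
\[
C_2=\{(x,t)\mid t\le f_2(\bx)-f_2(x)\}.
\]
Both are convex and contain $(\bx,0)$ on their boundary, and the minimality forces $\Int C_1\cap C_2=\emptyset$; continuity of $f_1$ at a point of $\dom f_2$ guarantees that $C_1$ has nonempty interior, so Hahn--Banach yields a closed hyperplane separating them. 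Reading off its slope and normalising the vertical component produces $x_1^*\in\partial f_1(\bx)$ and $x_2^*\in\partial f_2(\bx)$ with $x^*=x_1^*+x_2^*$, which is exactly the claimed equality.

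For the fuzzy sum rule (i) the strategy I would follow is the Ekeland-based decoupling argument. Fix $x^*\in\partial(f_1+f_2)(\bx)$; absorbing the linear term into $f_1$, we may take $x^*=0$, so that $\bx$ minimises $f_1+f_2$ up to an $o(\norm{\cdot-\bx})$ error. Adding a small penalty $\eps\norm{\cdot-\bx}$ makes $\bx$ an approximate minimiser, and I would then apply the Ekeland principle of Lemma~\ref{Ek} to the decoupled function $(x,y)\mapsto f_1(x)+f_2(y)+(\text{coupling penalty on }\norm{x-y})$ on $X\times X$, obtaining an approximate minimiser $(x_1,x_2)$ near $(\bx,\bx)$ at which the coupling term links the two variables. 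The decisive step, and the main obstacle, is to convert the resulting Ekeland stationarity --- which only provides a lower support by $-(\eps/\la)\norm{\cdot-\hat x}$ --- into genuine Fr\'echet subgradients $x_i^*\in\partial f_i(x_i)$ whose sum lies in $\eps\B^*$. This is exactly where Asplundness is indispensable: it supplies, via a smooth variational principle and separable reduction, the Fr\'echet-differentiable perturbations needed to extract the subgradients from mere approximate stationarity. Because a complete argument would recapitulate standard Asplund-space theory, I would record only the Ekeland outline above and otherwise cite \cite{Kru03.1,Mor06.1}.
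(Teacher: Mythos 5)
The paper never proves Lemma~\ref{SR}: it is stated as a preliminary and imported from the literature (Fabian \cite{Fab89}, \cite[Rule~2.2]{Kru03.1}, \cite[Theorem~2.33]{Mor06.1} for part (i); \cite[Theorem~0.3.3]{IofTik79}, \cite[Theorem~2.8.7]{Zal02} for part (ii)), so there is no internal proof to compare yours against — the comparison is with those sources. Your part (ii) is a correct, essentially complete Moreau--Rockafellar argument: global minimality of $x\mapsto f_1(x)+f_2(x)-\ang{x^*,x-\bx}$ at $\bx$, separation of the shifted epigraph of $f_1$ from the hypograph of $f_2(\bx)-f_2(\cdot)$, with the continuity hypothesis supplying both the nonempty interior and the non-verticality of the separating hyperplane; this is the same proof as in the references the paper cites. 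Your part (i) is, as you acknowledge, an outline plus citation — which is also exactly the paper's stance — and the outline correctly locates the real difficulty: the Ekeland principle (Lemma~\ref{Ek}) yields only a nonsmooth norm perturbation and approximate stationarity, which cannot by itself produce Fr\'echet subgradients, so Asplundness must enter through a smooth variational principle and separable reduction, as in Fabian's proof. There is no objection, provided one accepts (as the paper does) that part (i) is quoted rather than proved.

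One caveat about what your argument for (i) would deliver if completed: by fixing $x^*\in\partial(f_1+f_2)(\bx)$ first and reducing to $x^*=0$, you obtain points $x_1,x_2$ \emph{depending on} $x^*$, i.e., the pointwise form ``for each $x^*$ there exist $x_1,x_2$ with $x^*\in\partial f_1(x_1)+\partial f_2(x_2)+\eps\B^*$''. The lemma as written asserts a single pair $(x_1,x_2)$ serving the whole set $\partial(f_1+f_2)(\bx)$, which is formally stronger; the cited sources state the pointwise (or union-over-pairs) form, e.g. \cite[Theorem~2.33]{Mor06.1}. This discrepancy is harmless for the paper, which only ever applies the rule to one specific subgradient at a time (to $0$, and to a given $(w^*_1,w^*_2,w^*)$, in the proof of Proposition~\ref{P8}), but in a self-contained write-up you should either state the pointwise version or insert the union over admissible pairs $(x_1,x_2)$ on the right-hand side.
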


The first sum rule in the lemma above is known as the \emph{fuzzy} or \emph{approximate} sum rule (Fabian \cite{Fab89}; cf., e.g., \cite[Rule~2.2]{Kru03.1}, \cite[Theorem~2.33]{Mor06.1}) for Fr\'echet subdifferentials in Asplund spaces.
The other one is an example of an \emph{exact} sum rule.
It is valid in arbitrary normed (or even locally convex) spaces.
%Rule (ii) can be found, e.g., in \cite[Corollary~1.12.2]{Kru03.1} and \cite[Proposition~1.107]{Mor06.1}.
For rule (ii) we refer the readers to \cite[Theorem~0.3.3]{IofTik79} and
\cite[Theorem~2.8.7]{Zal02}.

Recall that a Banach space is \emph{Asplund} if every continuous convex function on an open convex set is Fr\'echet differentiable on some its dense subset \cite{Phe89}, or equivalently, if the dual of each its separable subspace is separable.
We refer the reader to \cite{Phe89,Mor06.1,BorZhu05} for discussions about and characterizations of Asplund spaces.
All reflexive, in particular, all finite dimensional Banach spaces are Asplund.

\section{Transversality and subtransversality}\label{s:subtrans}

In this introductory section we briefly discuss two standard regularity properties of a pair of sets in a normed linear space, namely \emph{transversality} and \emph{subtransversality} (also known under other names) with the emphasis on the second one.

\begin{definition}\label{D1}
Suppose $X$ is a normed linear space, $A,B\subset X$, and $\bx\in A\cap B$.
$\{A,B\}$ is \emph{subtransversal} at $\bar x$ if one of the following two equivalent conditions is satisfied:
\begin{enumerate}
\item
there exist numbers $\alpha\in]0,1[$ and $\delta>0$ such that
\begin{equation}\label{D1-1}
\bigl(A+(\alpha\rho)\B\bigr)\cap \bigl(B+(\alpha\rho)\B\bigr)\cap\B_{\delta}(\bar x)\subset
\left(A\cap B\right)+\rho\B
\;\;\mbox{for all}\;\; \rho\in ]0,\delta[;
\end{equation}
\item
there exist numbers $\alpha\in]0,1[$ and $\delta>0$ such that
\begin{gather}\label{D1-2}
\alpha d\left(x,A\cap B\right)\le \max\left\{d(x,A),d(x,B)\right\}
\quad\mbox{for all}\quad x\in \B_{\delta}(\bar{x}).
\end{gather}
\end{enumerate}
%If, additionally, $\bar x$ is an isolated point of $A\cap B$, then $\{A,B\}$ is called \emph{strongly subtransversal} at $\bar x$.
The exact upper bound of all $\alpha\in]0,1[$ such that condition \eqref{D1-1} or condition \eqref{D1-2} is satisfied for some $\de>0$ is denoted $\str$ with the convention that the supremum of the empty subset of $\R_+$ equals~0.
\end{definition}
%\todo[inline]{Should we keep ‘strong subtransversality’ in Definition~\ref{D1} if we do not investigate this property?
%This could be a topic for future research.}
%\todo[inline]{Comment on the choice of a norm in $\R^2$.}

The requirement that $\al<1$ in both parts of Definition~\ref{D1} imposes no restrictions on the property.
It is only needed in the case $\bx\in\Int(A\cap B)$ (when conditions \eqref{D1-1} and \eqref{D1-2} are satisfied for some $\de>0$ with any $\al>0$) to ensure that $\str$ is always less than or equal to 1 and simplify the subsequent quantitative estimates.
It is easy to check that when $\bx\in\bd(A\cap B)$, each of the conditions \eqref{D1-1} and \eqref{D1-2} implies $\al\le1$.
We are going to use similar requirements in other definitions throughout the article.

The property in part (i) of Definition~\ref{D1} was introduced recently in \cite{KruTha15} (under the name \emph{subregularity}).
It can be viewed as a local analogue of the global \emph{uniform normal property} introduced in the convex setting in \cite[Definition~3.1(4)]{BakDeuLi05} as a generalization of the \emph{property (N)} of convex cones by Jameson \cite{Jam72}.
A particular case of the Jameson property (N) for convex cones $A$ and $B$ such that $B=-A$ and $A\cap(-A)=\{0\}$ was studied by M. Krein in the 1940s.
%\todo{A reference?}
Subtransversality constant $\str$ is, in a sense, a local analogue of the \emph{normality constant} in \cite[Definition~4.2]{BakDeuLi05}.

The metric property in part (ii) of Definition~\ref{D1} is a very well known regularity property that has been around for more than 30 years under various names ((local) \emph{linear regularity}, \emph{metric regularity}, \emph{linear coherence}, \emph{metric inequality}, and \emph{subtransversality}); cf. \cite{BakDeuLi05,
BauBor93,Dol82,
RocWet98,BauBor96, Iof89,Iof00_,Iof16,KlaLi99,HesLuk13,
LiNgPon07,NgaiThe01,Pen13,ZheNg08,ZheWeiYao10, DruIofLew15.2}.
It has been used as the key assumption when establishing linear convergence of sequences generated by alternating projection algorithms and a qualification condition for subdifferential and normal cone calculus formulae.
One can also observe that condition \eqref{D1-2} is equivalent to the function $x\mapsto\max\{d(x,A),d(x,B)\}$ having a local \emph{error bound} \cite{Aze03,FabHenKruOut10,Kru15}/\emph{weak sharp minimum} \cite{BurDeng02,BurDeng05,BurFer93} at $\bx$ with constant $\al$.
The equivalence of the two properties in Definition~\ref{D1} and the fact that the exact upper bounds of all $\alpha\in]0,1[$ in conditions \eqref{D1-1} and \eqref{D1-2} coincide were established in \cite[Theorem~3.1]{KruTha15}.

The subtransversality of $\{A,B\}$ is equivalent to the condition $\str>0$, and $\str$ provides a quantitative characterization of this property.
%It can be infinite (see Example~\ref{E2}).
%If finite, the value is always in the interval [0,1].

The subtransversality property of pairs of sets in Definition~\ref{D1} is a weaker version of another well known regularity property in the next definition.

\begin{definition}\label{D2+}
Suppose $X$ is a normed linear space, $A,B\subset X$, and $\bx\in A\cap B$.
$\{A,B\}$ is \emph{transversal} at $\bar x$ if one of the following two equivalent conditions is satisfied:
\begin{enumerate}
\item
there exist numbers $\alpha\in]0,1[$ and $\delta>0$ such that
\begin{equation}\label{D2-2}
(A-a-x_1)\cap(B-b-x_2)\cap(\rho\B )\neq \emptyset
\end{equation}
for all $\rho \in ]0,\delta[$, $a\in A\cap\B_{\delta}(\bar{x})$, $b\in B\cap\B_{\delta}(\bar{x})$, and all $x_1,x_2\in X$ with $\max\{\|x_1\|,\|x_2\|\}<\alpha\rho$;
\item
there exist numbers $\alpha\in]0,1[$ and $\delta>0$ such that
\begin{gather}\label{D2-3}
\alpha d\left(x,(A-x_1)\cap (B-x_2)\right)\le \max\left\{d(x,A-x_1),d(x,B-x_2)\right\}
\;\;\mbox{for all}\;\; x\in \B_{\delta}(\bar{x}),\;x_1,x_2\in \delta\B.
\hspace{-.2cm}
\end{gather}
\end{enumerate}
The exact upper bound of all $\alpha\in]0,1[$ such that condition \eqref{D2-2} or condition \eqref{D2-3} is satisfied for some $\de>0$ is denoted $\tr$ with the convention that the supremum of the empty subset of $\R_+$ equals 0.
\end{definition}

The property in part (i) of Definition~\ref{D2+} was introduced by the first author in 2005.
Since then the terminology in the papers (co-)authored by him has changed several time causing some confusion, for which he apologizes to the readers.
The next table reflects the evolution of the terminology.
\begin{center}
\begin{tabular}{|c|c|c|c|c|}
  \hline
  % after \\: \hline or \cline{col1-col2} \cline{col3-col4} ...
  2005 \cite{Kru05.1} & 2006 \cite{Kru06.1} & 2009 \cite{Kru09.1} & 2013 \cite{KruTha13.1} & 2017 \cite{KruLukTha} \\
  \hline
  Regularity & Strong regularity & Property (UR)$_S$ & Uniform regularity & Transversality \\
  \hline
\end{tabular}
\end{center}
%Part (i) of Definition~\ref{D2+} first appeared in \cite{Kru05.1} (see also \cite{Kru06.1}), where the property was referred to as simply \emph{regularity} (\emph{strong regularity}).
%In \cite{Kru09.1,KruTha13.1,KruTha15} the term \emph{uniform regularity} is used, while
In \cite{LewLukMal09} the property is called \emph{linearly regular intersection}.
If $A$ and $B$ are closed convex sets and $\Int A\ne\emptyset$, then this property is equivalent to the conventional qualification condition: $\Int A\cap B\ne\emptyset$ (cf. \cite[Proposition~14]{Kru05.1}).

The metric property in part (ii) of Definition~\ref{D2+} was referred to in \cite{Kru05.1,Kru06.1,Kru09.1} as \emph{strong metric inequality}.
The equivalence of the two properties in Definition~\ref{D2+} and the fact that the exact upper bounds of all $\alpha\in]0,1[$ in conditions (i) and (ii) coincide were established in \cite[Theorem~1]{Kru05.1}.

From comparing the second parts in Definitions~\ref{D1} and \ref{D2+}, one can see that the transversality of a pair of sets corresponds to the subtransversality of all their small translations holding uniformly (cf. \cite[p.~1638]{DruIofLew15.2}).
The next inequality is straightforward:
$$\tr\le\str.$$
%\begin{example}\label{E2}
%If $\bx\in\Int(A\cap B)$, then $\{A,B\}$ is trivially subtransversal at $\bar x$ with any $\alpha>0$.
%Thus, $\tr=\str=\infty$.
%\xqed\end{example}

\begin{example}\label{E3}
If $A=B$, then $d\left(x,A\cap B\right)=d(x,A)=d(x,B)$ for any $x\in X$.
Hence, condition \eqref{D1-2} holds with any  $\alpha\in]0,1[$ and $\delta>0$.
Thus, $\{A,B\}$ is subtransversal at $\bar x$ and $\str=1$.
\xqed\end{example}

Note that, under the conditions of Example~\ref{E3}, $\{A,B\}$ does not have to be transversal at $\bar x$.

\begin{example}\label{E1}
Let $ X=\R^2$, $A=B=\R\times\{0\}$, and $\bx=(0,0)$.
If $x_1=(0,\eps)$ and $x_2=(0,0)$, then condition \eqref{D2-2} does not hold for any $a\in A$, $b\in B$, $\rho>0$, and $\eps>0$.
Thus, $\{A,B\}$ is subtransversal at $\bar x$ thanks to Example~\ref{E3}, but not transversal, and $\tr=0$.
\xqed\end{example}

We refer the reader to \cite{KruTha15} for more examples illustrating the relationship between the properties in Definitions~\ref{D1} and \ref{D2+}.

The next proposition provides a useful metric characterization of the subtransversality property complementing the one in part (ii) of Definition~\ref{D1}.
It was established in \cite[Theorem~1(iii)]{KruLukTha} in the Euclidean space setting, but the proof given there is valid in an arbitrary normed linear space.

\begin{proposition}\label{P1}
Suppose $X$ is a normed linear space, $A,B\subset X$, and $\bx\in A\cap B$.
$\{A,B\}$ is \emph{subtransversal} at $\bar x$  if and only if there exist numbers $\alpha\in]0,1[$ and $\delta>0$ such that
\begin{equation}\label{P3-1}
\alpha d(x,A\cap B)\le d(x,B)\;\;\mbox{for all}\;\; x\in A\cap\B_{\delta}(\bar{x}).
\end{equation}
Moreover,
\begin{equation}\label{P3-2}
\frac{1}{2(\strr)\iv+1}\le\str\le\strr,
\end{equation}
where $\strr$ is the exact upper bound of all numbers $\alpha\in]0,1[$ such that condition (\ref{P3-1}) is satisfied, with the convention that the supremum of the empty subset of $\R_+$ equals 0.
\end{proposition}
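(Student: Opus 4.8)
The plan is to derive the two-sided estimate \eqref{P3-2} directly; the equivalence claimed in the proposition is then an automatic consequence, since \eqref{P3-2} shows that $\str>0$ if and only if $\strr>0$. Hence $\{A,B\}$ is subtransversal at $\bx$ (i.e. $\str>0$) exactly when \eqref{P3-1} holds for some $\alpha\in]0,1[$ and $\delta>0$ (i.e. $\strr>0$). So I would concentrate the argument on the numerical inequalities.

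The upper estimate $\str\le\strr$ is the routine direction. I would start from condition \eqref{D1-2} holding with some constant $\alpha$ on a ball $\B_{\delta}(\bx)$, and then simply restrict attention to points $x\in A\cap\B_{\delta}(\bx)$. For any such $x$ one has $d(x,A)=0$, so $\max\{d(x,A),d(x,B)\}=d(x,B)$, and \eqref{D1-2} collapses to exactly \eqref{P3-1} with the \emph{same} $\alpha$ and $\delta$. Thus every $\alpha$ admissible in \eqref{D1-2} is admissible in \eqref{P3-1}, which gives $\str\le\strr$.

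The lower estimate is the substantive part. Suppose \eqref{P3-1} holds with some $\alpha\in]0,1[$ and $\delta>0$; I want to produce an admissible constant for \eqref{D1-2} on a smaller ball. Fix $\delta'\in]0,\delta/2[$ and take an arbitrary $x\in\B_{\delta'}(\bx)$. For small $\eta>0$ choose a near-projection $a\in A$ with $\norm{x-a}<d(x,A)+\eta$ (approximate projections are needed because in a general normed space $P_A(x)$ may be empty). Since $\bx\in A$ gives $d(x,A)\le\norm{x-\bx}<\delta'$, one checks $\norm{a-\bx}\le\norm{a-x}+\norm{x-\bx}<2\delta'+\eta<\delta$ for $\eta$ small, so $a\in A\cap\B_{\delta}(\bx)$ and \eqref{P3-1} applies at $a$, yielding $\alpha\, d(a,A\cap B)\le d(a,B)$. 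Combining the triangle inequality $d(x,A\cap B)\le\norm{x-a}+d(a,A\cap B)$ with the $1$-Lipschitz bound $d(a,B)\le\norm{a-x}+d(x,B)$ gives
$$
d(x,A\cap B)\le\paren{1+\tfrac{1}{\alpha}}\norm{x-a}+\tfrac{1}{\alpha}d(x,B)\le\paren{1+\tfrac{1}{\alpha}}\bigl(d(x,A)+\eta\bigr)+\tfrac{1}{\alpha}d(x,B).
$$
Letting $\eta\downarrow0$ and bounding both $d(x,A)$ and $d(x,B)$ by $\max\{d(x,A),d(x,B)\}$ produces $d(x,A\cap B)\le(1+2\alpha^{-1})\max\{d(x,A),d(x,B)\}$, i.e. \eqref{D1-2} holds on $\B_{\delta'}(\bx)$ with constant $(2\alpha^{-1}+1)^{-1}$. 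Hence $\str\ge(2\alpha^{-1}+1)^{-1}$, and letting $\alpha\uparrow\strr$ gives $\str\ge(2(\strr)^{-1}+1)^{-1}$, with the convention that this equals $0$ when $\strr=0$.

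I expect the only delicate points to be bookkeeping rather than analytic: guaranteeing that the near-projection $a$ stays inside $\B_{\delta}(\bx)$ so that \eqref{P3-1} is genuinely applicable (which forces the compatible choice $\delta'<\delta/2$ and $\eta$ sufficiently small, uniformly in $x$), and correctly tracking the constant through the limit $\eta\downarrow0$ and the passage $\alpha\uparrow\strr$. The heart of the estimate is a single triangle-inequality chain, so once the neighbourhood sizes are chosen compatibly there is no real obstacle.
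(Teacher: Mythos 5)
Your proof is correct. Note that the paper itself does not prove Proposition~\ref{P1}: it cites \cite[Theorem~1(iii)]{KruLukTha}, remarking that the proof given there (in the Euclidean setting) remains valid in arbitrary normed linear spaces. Your argument is essentially that proof, written out self-containedly: the easy direction $\str\le\strr$ by restricting \eqref{D1-2} to $x\in A$, and the substantive direction via the triangle-inequality chain $d(x,A\cap B)\le\norm{x-a}+\alpha^{-1}d(a,B)\le(1+\alpha^{-1})\norm{x-a}+\alpha^{-1}d(x,B)$ applied at a near-projection $a$ of $x$ onto $A$ --- the use of $\eta$-approximate projections (with $\eta<\delta-2\delta'$ so that $a$ stays in $\B_\delta(\bx)$) being exactly the device that frees the argument from the Euclidean existence of $P_A(x)$. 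The bookkeeping, the resulting constant $\alpha/(\alpha+2)$, and the passage $\alpha\uparrow\strr$ all check out, so your write-up could serve as the omitted proof.
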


Proposition~\ref{P1} can be considered as a nonconvex extension of \cite[Theorem~3.1]{NgYang04}.

\begin{remark}%\label{R1}
1. The maximum of the distances in Definitions~\ref{D1} and \ref{D2+} (explicitly present in part (ii) and implicitly also in part (i)) and some other representations in the sequel corresponds to the maximum norm in $\R^2$ employed in all these definitions and assertions.
It can be replaced everywhere by the sum norm (pretty common in this type of definitions in the literature) or any other equivalent norm.
All the assertions above including the quantitative characterizations will remain valid (as long as the same norm is used everywhere), although the exact values of $\str$ and $\tr$ do depend on the chosen norm and some estimates (e.g. in Propositions~\ref{P1}) can change.

2. In some situations it can be convenient to use the reciprocal $(\str)\iv$ instead of $\str$ for characterizing the subtransversality property.
The property is obviously equivalent to $(\str)\iv<\infty$.
For instance, using the reciprocals, the quantitative estimates (\ref{P3-2}) in Propositions~\ref{P1} can be rewritten in a simpler form as
\sloppy
\begin{equation*}%\label{P3-2}
(\strr)\iv\le(\str)\iv\le2(\strr)\iv+1.
\end{equation*}

3. Thanks to Propositions~\ref{P1}, one can use $\strr$ instead of $\str$ for quantitative characterization of the subtransversality property.
Note that $\strr$ is not symmetric: ${\rm str}'[B,A](\bx)\ne\strr$.
One can strengthen the conclusion of Propositions~\ref{P1} by replacing $\strr$ in the \RHS\ of (\ref{P3-2}) by $\min\{\strr,{\rm str}'[B,A](\bx)\}$ and by $\max\{\strr,{\rm str}'[B,A](\bx)\}$ in its \LHS.
\xqed\end{remark}

Not surprisingly, transversality properties of pairs of sets %in Definitions~\ref{D1} and \ref{D2+}
are strongly connected with the corresponding regularity properties of \SVM s.
The properties in Definitions~\ref{D1} and \ref{D2+} correspond, respectively, to \emph{metric subregularity} and \emph{metric regularity} of \SVM s (cf., e.g., \cite{DonRoc14}), which partially explains the terminology adopted in the current article.

\begin{definition}\label{MR}
Suppose $X$ and $Y$ are metric spaces, $F:X \rightrightarrows Y$, and $(\bx,\by)\in\gph F:=\{(x,y)\in X\times Y\mid y\in F(x)\}$.
\begin{enumerate}
\item
$F$ is \emph{metrically regular} at $(\bx,\by)\in\gph F$ if there exist numbers $\alpha>0$ and $\delta>0$ such that
\begin{equation*}
\alpha d\left(x,F^{-1}(y)\right) \le d(y,F(x))\;\;\mbox{for all}\;\; x \in \B_{\delta}(\bar{x}),\;
y \in \B_{\delta}(\bar{y});
\end{equation*}
\item
$F$ is \emph{metrically subregular} at $(\bx,\by)\in\gph F$ if there exist numbers $\alpha>0$ and $\delta>0$ such that
\begin{equation*}
\alpha d\left(x,F^{-1}(\by)\right) \le d(\by,F(x))\;\;\mbox{for all}\;\; x \in \B_{\delta}(\bar{x}).
\end{equation*}
\end{enumerate}
\end{definition}
In a slight violation of the notation adopted in \cite{DonRoc14}, we will use $\rg$ and $\srg$ to denote the exact upper bounds of all $\al$ in parts (i) and (ii) of the above definition, respectively.

The regularity properties in Definition~\ref{MR} lie at the core of the contemporary variational analysis.
They have their roots in classical analysis and are crucial for the study of stability of solutions to (generalized) equations and various aspects of subdifferential calculus and optimization theory.
For the state of the art of the regularity theory of \SVM s and its numerous applications we refer the reader to the book by Dontchev and Rockafellar \cite{DonRoc14} and the comprehensive survey by Ioffe \cite{Iof16,Iof2}.

Given a pair of subsets $A$ and $B$ of a normed linear space $X$, one can define a \SVM\ $F:X \rightrightarrows X^2$ by the equality (cf. \cite{Iof00_,Iof16})
\begin{equation}\label{7}
F(x):=(A-x)\times(B-x), \quad x\in X.
\end{equation}
The next proposition employs the maximum norm on $X^2$ ($\norm{(x_1,x_2)}:=\max\{\norm{x_1},\norm{x_2}\}$, $x_1,x_2\in X$).

\begin{proposition}\label{P2}
Suppose $X$ is a normed linear space, $A,B\subset X$, $\bx\in A\cap B$, and a \SVM\ $F:X \rightrightarrows X^2$ is defined by \eqref{7}.
\begin{enumerate}
\item
$\{A,B\}$ is {transversal} at $\bar x$ if and only if $F$ is {metrically regular} at $(\bar x,0)$;
\item
$\{A,B\}$ is {subtransversal} at $\bar x$ if and only if $F$ is {metrically subregular} at $(\bar x,0)$.
\end{enumerate}
Moreover, $\tr={\rm rg}[F](\bx,0)$ and $\str={\rm srg}[F](\bx,0)$.
\end{proposition}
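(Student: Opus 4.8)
The plan is to reduce both equivalences to two elementary identities for the distance functions attached to $F$ and $F^{-1}$, and then to match these against the metric characterizations in part (ii) of Definitions~\ref{D1} and \ref{D2+}. Note first that $(\bx,0)\in\gph F$: since $\bx\in A\cap B$ we have $0\in A-\bx$ and $0\in B-\bx$, hence $(0,0)\in(A-\bx)\times(B-\bx)=F(\bx)$.

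First I would compute the forward distance. For $x\in X$ and $y=(y_1,y_2)\in X^2$ with the maximum norm, the product structure of $F(x)=(A-x)\times(B-x)$ together with the max norm lets the infimum separate across the two factors:
\[
d(y,F(x))=\inf_{a\in A,\,b\in B}\max\{\|y_1-(a-x)\|,\|y_2-(b-x)\|\}=\max\{d(x+y_1,A),\,d(x+y_2,B)\}.
\]
Setting $y=0$ gives $d(0,F(x))=\max\{d(x,A),d(x,B)\}$. Second, the preimage is read off directly: $x\in F^{-1}(y_1,y_2)$ iff $y_1\in A-x$ and $y_2\in B-x$, i.e. $x\in(A-y_1)\cap(B-y_2)$; hence $F^{-1}(y_1,y_2)=(A-y_1)\cap(B-y_2)$ and in particular $F^{-1}(0)=A\cap B$, so that $d(x,F^{-1}(0))=d(x,A\cap B)$.

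With these identities the two parts are immediate. For part (ii), the metric subregularity inequality $\alpha\,d(x,F^{-1}(0))\le d(0,F(x))$ for $x\in\B_{\delta}(\bx)$ becomes exactly $\alpha\,d(x,A\cap B)\le\max\{d(x,A),d(x,B)\}$, which is condition \eqref{D1-2}. For part (i), writing $x_i:=y_i$ and using $d(x+x_i,A)=d(x,A-x_i)$, the metric regularity inequality $\alpha\,d(x,F^{-1}(y))\le d(y,F(x))$ for $x\in\B_{\delta}(\bx)$ and $\|y\|<\delta$ (i.e. $\max\{\|y_1\|,\|y_2\|\}<\delta$) becomes $\alpha\,d(x,(A-x_1)\cap(B-x_2))\le\max\{d(x,A-x_1),d(x,B-x_2)\}$ for $x\in\B_{\delta}(\bx)$ and $x_1,x_2$ small, which is precisely condition \eqref{D2-3}. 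In both cases the inequalities coincide term by term, so not only are the properties equivalent but the ranges of admissible $\alpha$ agree, yielding $\str={\rm srg}[F](\bx,0)$ and $\tr={\rm rg}[F](\bx,0)$.

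The only points needing care are minor. One is the separation of the infimum in the forward-distance identity; it holds because, when $f$ depends only on $a$ and $g$ only on $b$, one has $\inf_{a,b}\max\{f(a),g(b)\}=\max\{\inf_a f(a),\inf_b g(b)\}$, verified in one line (the left-hand side dominates the right termwise, and near-minimizers of the two factors attain it). The other is the mismatch between the constraint $\alpha\in{]0,1[}$ in Definitions~\ref{D1} and \ref{D2+} and the unrestricted $\alpha>0$ in Definition~\ref{MR}; as explained after Definition~\ref{D1}, this cap is immaterial except when $\bx\in\Int(A\cap B)$, a degenerate situation in which both the (sub)transversality conditions and the metric (sub)regularity inequalities hold trivially and which is recorded separately. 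I expect no serious obstacle: once the two distance identities are in hand, the proposition is a matter of substitution and comparison.
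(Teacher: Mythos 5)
Your proof is correct, and it is worth noting that the paper itself gives no proof of Proposition~\ref{P2} at all: it defers to \cite[Theorems~2 and 3, and Corollaries~2.1 and 3.1]{Kru05.1} (see the paragraph following Proposition~\ref{P2+}). Your argument is the natural direct verification and makes the statement self-contained: the two identities
$d\bigl(y,F(x)\bigr)=\max\{d(x+y_1,A),\,d(x+y_2,B)\}$
(using the separation of the infimum over a product set under the maximum norm, which you justify correctly) and
$F^{-1}(y_1,y_2)=(A-y_1)\cap(B-y_2)$
reduce the metric regularity and subregularity inequalities of Definition~\ref{MR} term by term to conditions \eqref{D2-3} and \eqref{D1-2}, so both equivalences and the matching of admissible constants follow by pure substitution. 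This is presumably the same computation as in the cited source, so what your route buys is not novelty but transparency: the exact equality of constants (as opposed to the mere two-sided estimates in Propositions~\ref{P3} and \ref{P2+}) is visible precisely because the inequalities coincide literally under the maximum norm.

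Two small points deserve more care than your closing remark gives them. First, the cap $\alpha\in\left]0,1\right[$: in the degenerate case $\bx\in\Int(A\cap B)$ one has $\str=1$ (by the cap and the convention) while ${\rm srg}[F](\bx,0)=+\infty$, since the subregularity inequality then holds for every $\alpha>0$; so the ``moreover'' equalities, read literally, fail there. This is a defect of conventions in the statement itself rather than of your argument, but the statement does not in fact ``record it separately,'' so you should say explicitly that the equalities $\tr={\rm rg}[F](\bx,0)$ and $\str={\rm srg}[F](\bx,0)$ are proved for $\bx\in\bd(A\cap B)$, where the observation after Definition~\ref{D1} (any admissible $\alpha$ is automatically $\le 1$) is exactly what converts ``the admissible ranges agree'' into equality of the suprema. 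Second, a triviality worth one clause: \eqref{D2-3} quantifies over $x_1,x_2\in\delta\B$ (closed ball), while metric regularity gives the inequality for $\max\{\|y_1\|,\|y_2\|\}<\delta$; shrinking $\delta$ reconciles the two and does not affect the suprema.
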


Conversely, given a \SVM\ $F:X\rightrightarrows Y$ between normed linear spaces and a point $(\bx,\by)\in\gph F$, one can define two sets in $X\times Y$:
\begin{equation}\label{8}
A:=\gph F,\quad B:=X\times \{\bar y\}.
\end{equation}
The next proposition employs the maximum norm on $X\times Y$ ($\norm{(x,y)}:=\max\{\norm{x},\norm{y}\}$, $x\in X$, $y\in Y$).

\begin{proposition}\label{P3}
Suppose $X$ and $Y$ are normed linear spaces, $F:X\rightrightarrows Y$, $(\bx,\by)\in\gph F$, and sets $A$ and $B$ are defined by \eqref{8}.
\begin{enumerate}
\item
$F$ is {metrically regular} at $(\bar x,\by)$ if and only if $\{A,B\}$ is {transversal} at $(\bx,\by)$; \item
$F$ is {metrically subregular} at $(\bar x,\by)$ if and only if $\{A,B\}$ is subtransversal at $(\bx,\by)$.
\end{enumerate}
Moreover,
\begin{gather*}
\frac{1}{2(\rg)\iv+1} \le\tr\le\min\left\{\frac{\rg}{2},1\right\},
\\
\frac{1}{2(\srg)\iv+1} \le\str\le\min\left\{\frac{\srg}{2},1\right\}.
\end{gather*}
\end{proposition}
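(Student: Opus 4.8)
The plan is to read off both double inequalities directly from the \emph{metric} forms of the definitions, after recording three elementary distance formulas. Write points of $X\times Y$ as $w=(x,y)$ and use the maximum norm throughout. Since $(\bx,\by)\in\gph F$ we have $\bx\in F\iv(\by)$, and for $w=(x,y)$ one computes
\[
d(w,B)=\|y-\by\|,\qquad A\cap B=F\iv(\by)\times\{\by\},\qquad d(w,A\cap B)=\max\{d(x,F\iv(\by)),\,\|y-\by\|\}.
\]
The graph distance $d(w,A)=d(w,\gph F)$ will only ever be estimated, never computed exactly.

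For part (ii) I would prove the two bounds separately. For the \emph{lower} bound, assume $F$ is metrically subregular with constant $\alpha$ (any $\alpha<\srg$). Given $w=(x,y)$ near $(\bx,\by)$, pick $(u,v)\in\gph F$ almost realizing $r:=d(w,A)$; then $d(\by,F(u))\le\|v-\by\|\le\|v-y\|+\|y-\by\|$, and subregularity at $u$ together with the triangle inequality for $F\iv(\by)$ gives $d(x,F\iv(\by))\le(1+1/\alpha)r+(1/\alpha)\|y-\by\|$ as the approximation error vanishes. Writing $M:=\max\{d(w,A),d(w,B)\}$ this yields $d(w,A\cap B)\le(1+2/\alpha)M$, i.e. $\str\ge1/(2\alpha\iv+1)$, and letting $\alpha\uparrow\srg$ produces the left inequality. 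For the \emph{upper} bound --- where the factor $2$ appears --- assume $\{A,B\}$ is subtransversal with constant $\alpha$ (any $\alpha<\str$). Fix $x$ near $\bx$ and $v\in F(x)$ with $\|v-\by\|$ small, and apply inequality \eqref{D1-2} not at the graph point $(x,v)$ but at the \emph{midpoint} $w:=(x,\tfrac12(v+\by))$, lying halfway between $(x,v)\in A$ and $(x,\by)\in B$. The max norm makes $d(w,A)\le\tfrac12\|v-\by\|$ and $d(w,B)=\tfrac12\|v-\by\|$, so the right-hand side of \eqref{D1-2} is at most $\tfrac12\|v-\by\|$, while $d(w,A\cap B)\ge d(x,F\iv(\by))$. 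Hence $2\alpha\,d(x,F\iv(\by))\le\|v-\by\|$; taking the infimum over admissible $v\in F(x)$ gives metric subregularity with constant $2\alpha$, so $\srg\ge2\str$, i.e. $\str\le\srg/2$. The bound $\str\le1$ is the convention built into Definition~\ref{D1}.

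Part (i) follows the same two-step pattern, the only extra work being the translation bookkeeping in Definition~\ref{D2+}. Translating $B$ by $x_2=(0,\by-y)$ turns $B-x_2$ into $X\times\{y\}$, replacing the level $\by$ by an arbitrary nearby $y$; taking $x_1=0$ and running the midpoint construction with a graph point $(x,v)$, $v\in F(x)$, at level $y$ reproduces $2\alpha\,d(x,F\iv(y))\le\|v-y\|$ and hence $\rg\ge2\tr$. For the lower bound one repeats the subregular computation with a general translation $x_1=(a_1,b_1)$, $x_2=(a_2,b_2)$, setting $\tilde x:=x+a_1$ and $\tilde y:=\by+b_1-b_2$; the same triangle-inequality estimate, now invoking metric regularity at $(u,\tilde y)$ (with $u$ near $\bx$ and $\tilde y$ near $\by$), gives $\tr\ge1/(2\rg\iv+1)$, and $\tr\le1$ is again the convention.

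Finally, the qualitative equivalences in (i) and (ii) are read off from the quantitative estimates, since these force $\tr>0\Leftrightarrow\rg>0$ and $\str>0\Leftrightarrow\srg>0$. I expect the main obstacle to be organizational rather than conceptual: keeping all the auxiliary points $(u,v)$, the midpoint $w$, and the translated arguments $\tilde x,\tilde y$ inside the prescribed neighbourhoods while passing to infima over $v\in F(x)$, and checking the degenerate cases $\srg,\rg\in\{0,+\infty\}$ against the conventions. The one genuinely nonroutine idea is the midpoint test point, which is exactly what upgrades the naive estimate $\str\le\srg$ to the sharp $\str\le\srg/2$ (and likewise for $\tr$).
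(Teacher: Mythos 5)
Your proposal is correct, but there is no in-paper proof to compare it against: the paper states Proposition~\ref{P3} without proof, referring for the equivalences to \cite[Theorems~2 and 3, Corollaries~2.1 and 3.1]{Kru05.1} and for the quantitative estimates to \cite[Theorem~5.1]{KruTha15}. Judged on its own merits, your argument is sound and self-contained. The three distance formulas ($d(w,B)=\|y-\by\|$, $A\cap B=F\iv(\by)\times\{\by\}$, and $d(w,A\cap B)=\max\{d(x,F\iv(\by)),\|y-\by\|\}$ under the maximum norm) are exactly right, the triangle-inequality chain through an almost-nearest graph point $(u,v)$ gives $d(w,A\cap B)\le(1+2/\alpha)\max\{d(w,A),d(w,B)\}$ and hence the left-hand estimates, and the midpoint test point $w=(x,\tfrac12(v+\by))$ is precisely the device that sharpens the trivial bound $\str\le\srg$ to $\str\le\srg/2$; your translation bookkeeping $\tilde x=x+a_1$, $\tilde y=\by+b_1-b_2$ for part (i) is also correct, since $(A-w_1)\cap(B-w_2)=(F\iv(\tilde y)-a_1)\times\{\by-b_2\}$. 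One point deserves more than the label ``degenerate cases'': in part (i), the midpoint construction only yields $2\alpha\, d(x,F\iv(y))\le\|v-y\|$ for those $v\in F(x)$ whose midpoint stays in the transversality neighbourhood, and to pass to the infimum $d(y,F(x))$ over \emph{all} $v$ you must first bound $d(x,F\iv(y))$; unlike the subregularity case (where $\bx\in F\iv(\by)$ does this for free), here you need to apply the already-derived restricted inequality at $x=\bx$, $v=\by\in F(\bx)$ to conclude that $F\iv(y)\ne\emptyset$ and $d(\bx,F\iv(y))\le\|\by-y\|/(2\alpha)$ for $y$ near $\by$, after which the large-$\|v-y\|$ case is disposed of by shrinking the neighbourhood. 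Since you explicitly flag the infimum-passage as the place requiring care, this is a presentational gap rather than a mathematical one.
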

%\todo[inline]{Quantitative estimates to be checked.}

The equivalences in Propositions~\ref{P2} and \ref{P3} and some quantitative estimates can be found in \cite[Theorems~2 and 3, and Corollaries~2.1 and 3.1]{Kru05.1}; see also \cite[Proposition~3.5]{Iof00_}, \cite[Theorem~6.12]{Iof16}, \cite[Propositions~8 and 9]{Kru06.1}, \cite[Theorems~7 and 8, and Corollary~7.1]{Kru09.1} and \cite[Theorem~3]{KruLukTha}.
The quantitative estimates in Proposition~\ref{P3} are taken from \cite[Theorem~5.1]{KruTha15}.

\begin{remark}
The quantitative estimates in Proposition~\ref{P3} can be improved by choosing an appropriate norm on $X\times Y$.
\end{remark}

In the Euclidian space setting, the following (not more than) single-valued mapping $G:X^2\rightrightarrows X$ can replace \eqref{7} in the equivalences in Proposition~\ref{P2} (cf. \cite{LewMal08}):
\begin{equation}\label{9}
G(x_1,x_2):=
\begin{cases}
\{x_1-x_2\}&\mbox{if } x_1\in A\mbox{ and }x_2\in B,
\\
\emptyset&\mbox{otherwise}.
\end{cases}
\end{equation}

The next proposition employs the Euclidian norm on $X^2$ ($\norm{(x_1,x_2)}:=\sqrt{\norm{x_1}^2+\norm{x_2}^2}$, $x_1,x_2\in X$).

\begin{proposition}\label{P2+}
Suppose $X$ is a Euclidian space, $A,B\subset X$, $\bx\in A\cap B$, and a mapping $G:X^2\rightrightarrows X$ is defined by \eqref{9}.
\begin{enumerate}
\item
$\{A,B\}$ is {transversal} at $\bar x$ if and only if $G$ is {metrically regular} at $((\bar x,\bar x),0)$;
\item
$\{A,B\}$ is {subtransversal} at $\bar x$ if and only if $G$ is {metrically subregular} at $((\bar x,\bar x),0)$.
\end{enumerate}
Moreover,
\begin{gather*}
%\sqrt{\frac{2}{1+(\tr)^{-2}}}\le{\rm rg}[G]((\bx,\bx),0) \le\frac{2}{(\tr)\iv-1},
%\\
%\sqrt{\frac{2}{1+(\str)^{-2}}}\le{\rm srg}[G]((\bx,\bx),0) \le\frac{2}{(\str)\iv-1}.
%\\
\frac{1}{2({\rm rg}[G]((\bx,\bx),0))\iv+1}\le\tr
\le\frac{1}{\sqrt{2({\rm rg}[G]((\bx,\bx),0))^{-2}-1}},
\\
\frac{1}{2({\rm srg}[G]((\bx,\bx),0))\iv+1}\le\str
\le\frac{1}{\sqrt{2({\rm srg}[G]((\bx,\bx),0))^{-2}-1}}.
\end{gather*}
\end{proposition}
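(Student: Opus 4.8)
The plan is to argue directly with the mapping $G$, exploiting two elementary observations. First, for $(x_1,x_2)\in A\times B$ one has $G(x_1,x_2)=\{x_1-x_2\}$, so $d(y,G(x_1,x_2))=\|x_1-x_2-y\|$, whereas $d(y,G(x_1,x_2))=+\infty$ whenever $x_1\notin A$ or $x_2\notin B$; hence in Definition~\ref{MR} only pairs in $A\times B$ need to be examined. Second, $G^{-1}(y)=\{(w,w-y)\mid w\in A\cap(B+y)\}$, and in particular $G^{-1}(0)$ is the diagonal copy of $A\cap B$. The Euclidean norm on $X^2$ enters only through the midpoint identity: writing $m:=(x_1+x_2+y)/2$, for $(x_1,x_2)\in A\times B$ the substitution $(w,w-y)$ and $\|x_1-w\|^2+\|(x_2+y)-w\|^2=2\|m-w\|^2+\tfrac12\|x_1-x_2-y\|^2$ give, after taking the infimum over $w\in A\cap(B+y)$,
\[
d\bigl((x_1,x_2),G^{-1}(y)\bigr)^2=2\,d\bigl(m,A\cap(B+y)\bigr)^2+\tfrac12\,d\bigl(y,G(x_1,x_2)\bigr)^2 .
\]
This single identity produces the $\sqrt2$'s in the stated estimates and simultaneously mediates between the Euclidean norm here and the maximum norm underlying Proposition~\ref{P2}. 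Writing $\kappa:={\rm rg}[G]((\bx,\bx),0)$ and $\kappa':={\rm srg}[G]((\bx,\bx),0)$, the two ``iff'' assertions will follow at once from the quantitative two-sided bounds established in the next two steps.

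For the implications \emph{transversal}$\Rightarrow G$ metrically regular and \emph{subtransversal}$\Rightarrow G$ metrically subregular (which furnish the \emph{upper} bounds on $\tr$ and $\str$), I would fix $(x_1,x_2)\in A\times B$ near $(\bx,\bx)$ and $y$ near $0$ and bound $d(m,A\cap(B+y))$ from above. Since $x_1\in A$ and $x_2+y\in B+y$, the point $m$ lies within $\tfrac12\|x_1-x_2-y\|$ of both $A$ and $B+y$; inserting this into the transversality inequality \eqref{D2-3}, applied at $m$ with translation vectors $0$ and $-y$ (respectively into the subtransversality inequality \eqref{D1-2} with $y=0$), yields $d(m,A\cap(B+y))\le(2\alpha)^{-1}\|x_1-x_2-y\|$. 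Substituting into the displayed identity gives $d((x_1,x_2),G^{-1}(y))\le\sqrt{(1+\alpha^2)/(2\alpha^2)}\;d(y,G(x_1,x_2))$, i.e. $\kappa\ge\alpha\sqrt2/\sqrt{1+\alpha^2}$ for every admissible $\alpha$; squaring, rearranging and passing to the supremum over $\alpha$ produces exactly $\tr\le\bigl(2\kappa^{-2}-1\bigr)^{-1/2}$, and the same computation gives $\str\le\bigl(2(\kappa')^{-2}-1\bigr)^{-1/2}$.

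For the converse implications (the \emph{lower} bounds) I would verify \eqref{D2-3} directly. Given $x$ near $\bx$ and translation vectors $u_1,u_2\in\delta\B$, set $r:=\max\{d(x,A-u_1),d(x,B-u_2)\}$, choose $p\in A-u_1$ with $\|x-p\|\le r$, and put $z_1:=p+u_1\in A$, $z_2:=P_B(p+u_2)\in B$, so that $\|z_2-(p+u_2)\|=d(p,B-u_2)\le2r$ and thus $(z_1,z_2)\in A\times B$ with $d(u_1-u_2,G(z_1,z_2))=\|z_1-z_2-(u_1-u_2)\|\le2r$. Applying metric (sub)regularity of $G$ at $(z_1,z_2)$ with perturbation $y=u_1-u_2$, and discarding the second squared summand in the defining infimum to obtain $d((z_1,z_2),G^{-1}(u_1-u_2))\ge d(p,(A-u_1)\cap(B-u_2))$, I reach $d(p,(A-u_1)\cap(B-u_2))\le2r/c'$ for every $c'<\kappa$. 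The triangle inequality then gives $d(x,(A-u_1)\cap(B-u_2))\le r+2r/c'=(1+2/c')\,r$, which is precisely \eqref{D2-3} with constant $c'/(c'+2)$; the choice $u_1=u_2=0$ recovers \eqref{D1-2} and hence subtransversality (alternatively one may route the symmetric case through \eqref{P3-1} and Proposition~\ref{P1}, but this projection argument settles both properties uniformly). Passing to the supremum over $c'$ yields $\tr\ge\bigl(2\kappa^{-1}+1\bigr)^{-1}$ and $\str\ge\bigl(2(\kappa')^{-1}+1\bigr)^{-1}$.

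The main obstacle I anticipate is not any single inequality but the bookkeeping of neighbourhoods: one must check that the auxiliary points $m$ and $(z_1,z_2)$ and the perturbation $y$ all remain inside the balls on which the hypotheses are assumed valid, which forces $\delta$ to be shrunk by explicit norm-equivalence factors at each step. A secondary technical point is that, if $A$ and $B$ are not assumed closed, the projections $p$ and $z_2=P_B(\cdot)$ need not exist and should be replaced by near-projections with an $\varepsilon$ slack sent to $0$ at the end. Finally I would confirm that the degenerate cases match the formulas: an interior intersection point gives $\kappa=\kappa'=\sqrt2$ (the distance of a point to the diagonal), for which the upper-bound expressions return $1$, while a non-(sub)regular $G$ gives modulus $0$, and the lower bounds then collapse to $\tr=0$ (resp. $\str=0$), in agreement with the equivalences.
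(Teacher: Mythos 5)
Your proof is correct, but it is necessarily a different route from the paper's, because the paper offers no proof at all: Proposition~\ref{P2+} is stated there with only a pointer to \cite[Theorem~3]{KruLukTha} and \cite[Corollary~6.13]{Iof16}. Your argument is a genuine self-contained alternative, and the key device is well chosen: the parallelogram-law identity $d\bigl((x_1,x_2),G^{-1}(y)\bigr)^2=2\,d\bigl(m,A\cap(B+y)\bigr)^2+\tfrac12\,d\bigl(y,G(x_1,x_2)\bigr)^2$ with $m=(x_1+x_2+y)/2$ does all the work in both directions. In one direction it converts the transversality inequality \eqref{D2-3}, applied at $m$ with translations $0$ and $-y$ (respectively \eqref{D1-2} at $m$ when $y=0$), into metric (sub)regularity of $G$ with modulus $\alpha\sqrt2/\sqrt{1+\alpha^2}$, which rearranges exactly to the stated upper bounds on $\tr$ and $\str$; in the other direction, projecting $x$ onto $A-u_1$, then onto $B$, applying (sub)regularity of $G$ at the resulting pair with $y=u_1-u_2$, and discarding the second squared summand in the distance to $G^{-1}(u_1-u_2)$ yields \eqref{D2-3} (resp.\ \eqref{D1-2}) with constant $c'/(c'+2)$, i.e.\ the stated lower bounds; the two ``iff'' claims then follow from positivity of the respective moduli. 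The neighbourhood bookkeeping and the $\varepsilon$-slack needed when $A,B$ are not closed (the proposition indeed does not assume closedness) are routine, exactly as you say. What your approach buys is an elementary, fully quantitative proof from first principles (parallelogram law plus near-projections, no appeal to external regularity theory); what the paper's citation buys is brevity, since \cite{KruLukTha} derives the same constants within a broader framework relating several set-regularity and mapping-regularity notions.

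Two small slips in your final sanity-check paragraph, neither affecting the proof: at an interior intersection point $\kappa=\kappa'=\sqrt2$ gives $2\kappa^{-2}-1=0$, so the upper-bound expression is $+\infty$ (a vacuous bound, consistent with $\tr=\str=1$), not $1$; and when the modulus vanishes it is the \emph{upper} bounds, which then equal $0$, that force $\tr=0$ (resp.\ $\str=0$) --- the lower bounds become the trivial inequality $\ge 0$.
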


The above proposition is extracted from \cite[Theorem~3]{KruLukTha}; see also \cite[Corollary~6.13]{Iof16}.

In view of Propositions~\ref{P2}, \ref{P3} and \ref{P2+}, regularity models in terms of \SVM s and pairs of sets are in a sense equivalent.
In the current article we focus on the second model.

One of the typical applications of transversality properties of pairs (or more generally finite collections) of sets is to the convergence analysis of \emph{alternating (or cyclic) projections} for solving \emph{feasibility problems} \cite{Bre65,GubPolRai67,BauBor93,BauBor96, LewMal08,LewLukMal09, AttBolRedSou10,BauLukPhaWan13.1,BauLukPhaWan13.2, HesLuk13,DruIofLew15.2,NolRon16,KruTha16,%Pha16,
Iof2,KruLukTha,LukThaTeb}.

Given two sets $A$ and $B$, the feasibility problem consists in finding a point in their intersection $A\cap B$.
If these are closed sets in finite dimensions, alternating projections are determined by a sequence $(x_k)$ starting with some point $x_0$ and such that
\begin{equation*}%\label{ap}
x_{k+1}\in P_AP_B(x_{k})\quad(k=0,1,\ldots).
\end{equation*}
Here $P_A$ and $P_B$ stand for the Euclidean projection operators on the corresponding sets, i.e., e.g., $$P_A(x):=\{a\in A\mid \norm{x-a}=d(x,A)\},$$
where the Euclidean norm and distance are used.
If $A$ is closed and convex, then $P_A$ is a singleton.
%In fact, condition \eqref{ap} defines two sequences $(x_k)$ and $(y_k)$ such that $y_k\in P_B(x_{k})$ and $x_{k+1}\in P_A(y_k)$, which explains the name.
In analyzing convergence of the alternating projections $(x_k)$, it is usually helpful to look at the sequence of intermediate points $(b_k)$ with
$b_k\in P_B(x_{k})$ and $x_{k+1}\in P_A(b_k)$ ($k=0,1,\ldots$).
We denote the joining sequence by $(z_k)$, that is
\begin{align}\label{z_k}
z_{2n} = x_n \mbox{ and } z_{2n+1} = b_n,\quad (n=0,1,\ldots).
\end{align}
For simplicity of presentation let us assume throughout the discussion, without loss of generality, that $x_0\in A$.

%Equivalently, alternating projections are determined by a sequence $(z_k)$ starting with some point $x_0$ and such that
%\begin{equation}\label{ap2}
%z_{2k+1}\in P_B(z_{2k}),\quad z_{2k+2}\in P_A(z_{2k+1})\quad(k=0,1,\ldots).
%\end{equation}

Bregman \cite{Bre65} and Gubin et al \cite{GubPolRai67} showed that, if $A\cap B\ne\emptyset$ and the sets are closed and convex, the sequence converges to a point in $A\cap B$.
In the case of two subspaces, this fact was established by von Neumann in the mid-1930s; that is why the method of alternating projections is sometimes referred to as \emph{von Neumann’s method}.
It was noted in \cite{NolRon16} that alternating projections can be traced back to the 1869 work by Schwarz.
It was shown in \cite{GubPolRai67} that, if $\ri A\cap\ri B\ne\emptyset$, the convergence is linear, i.e.,
\begin{gather}\label{11}
\norm{x_k-\hat x}\le\al c^k\quad(k=0,1,\ldots),
\end{gather}
where $\hat x\in A\cap B$ is the limit of the sequence, $\al>0$ and $c\in]0,1[$.
If \eqref{11} holds, it is often said that $(x_k)$ converges with \emph{$R$-linear rate} $c$.
A systematic analysis of the convergence of alternating projections in the convex setting was done by Bauschke and Borwein \cite{BauBor93,BauBor96}, who demonstrated that it is the subtransversality property in Definition~\ref{D1} that is needed to ensure linear convergence.
In fact, as the next proposition taken from \cite{LukThaTeb} shows, subtransversality in the convex setting is necessary and sufficient for linear convergence of alternating projections.

\begin{proposition}\label{P5-}%\cite{LukThaTeb}
Suppose $X$ is a Euclidean space, $A,B\subset X$ are closed and convex, and $\bx\in A\cap B$.
\begin{enumerate}
\item
If
%$A$ and $B$ are convex, and
$\{A, B\}$ is subtransversal at $\bx$, then alternating projections converge linearly with rate at most
$1-\str^2$,
provided that the starting point is sufficiently close to $\bx$.
\item
If alternating projections converge linearly with rate $c\in]0,1[$ for any starting point sufficiently close to $\bx$, then $\{A, B\}$ is subtransversal at $\bx$ and $\str\ge\frac{1-c}{3-c}$.
%
%If, additionally, $A$ and $B$ are convex, then
%$\str\ge\frac{1-c}{3-c}$.
\end{enumerate}
\end{proposition}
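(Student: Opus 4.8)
The plan is to build both implications on two ingredients available for closed convex sets: the \emph{firm nonexpansiveness} of the metric projections $P_A,P_B$, and the metric characterization of subtransversality in Proposition~\ref{P1}. Recall that for a convex set $A$ and any $z\in A$ one has the basic projection inequality $\|P_A x-z\|^2\le\|x-z\|^2-d(x,A)^2$, and hence (with the notation of \eqref{z_k}, $x_0\in A$, $b_k=P_B(x_k)$, $x_{k+1}=P_A(b_k)$) the one-cycle estimate
\[
\|x_{k+1}-z\|^2\le\|x_k-z\|^2-d(x_k,B)^2-d(b_k,A)^2 \quad\text{for all } z\in A\cap B.
\]
In particular $(x_k)$ is Fej\'er monotone with respect to the nonempty set $A\cap B$, so it is bounded and, by the classical convex theory, converges to some $\hat x\in A\cap B$.

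For part (1) I would choose $z:=P_{A\cap B}(x_k)$ in the estimate above, so that $\|x_k-z\|=d(x_k,A\cap B)$ and $\|x_{k+1}-z\|\ge d(x_{k+1},A\cap B)$. Dropping the nonnegative term $d(b_k,A)^2$ and invoking the characterization \eqref{P3-1} of Proposition~\ref{P1} at the point $x_k\in A\cap\B_\delta(\bx)$ — which gives $d(x_k,B)\ge\alpha\,d(x_k,A\cap B)$ for any $\alpha<\strr$ — yields $d(x_{k+1},A\cap B)^2\le(1-\alpha^2)\,d(x_k,A\cap B)^2$, i.e.\ geometric decay of the gap. A telescoping argument, bounding $\|x_{k+1}-x_k\|\le d(x_k,B)+d(b_k,A)$ and summing the resulting geometric series, then transfers this decay to $\|x_k-\hat x\|$ and produces an $R$-linear rate as in \eqref{11}. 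Turning the raw factor coming out of this computation into the sharp value $1-\str^2$ is a delicate book-keeping step: I expect it to require feeding the discarded term $d(b_k,A)^2$ back through Proposition~\ref{P1} applied at $x_{k+1}$, optimizing the full two-sided estimate, and using the relations \eqref{P3-2} between $\str$ and $\strr$.

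For the converse, part (2), I would verify the metric definition directly. Fix $x$ near $\bx$, set $x_0:=P_A(x)\in A$, and run alternating projections from $x_0$; by hypothesis they converge $R$-linearly with rate $c$ to some $\hat x\in A\cap B$. Writing $d(x,A\cap B)\le\|x-\hat x\|\le d(x,A)+\|x_0-\hat x\|$, the task reduces to bounding $\|x_0-\hat x\|$ by a fixed multiple of $\max\{d(x,A),d(x,B)\}$. Here I would peel off the first iterates: the triangle inequality gives $\|x_0-\hat x\|\le\|x_0-x_1\|+\|x_1-\hat x\|$, the hypothesis bounds the tail $\|x_1-\hat x\|$ (this is the step needing care, see below), and the elementary estimates $\|x_0-x_1\|\le d(x_0,B)+d(b_0,A)\le 2d(x_0,B)$ together with $d(x_0,B)\le d(x,A)+d(x,B)$ close the loop. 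Collecting and rearranging the constants is intended to produce exactly the inequality $\tfrac{1-c}{3-c}\,d(x,A\cap B)\le\max\{d(x,A),d(x,B)\}$, which is \eqref{D1-2} with modulus $\tfrac{1-c}{3-c}$.

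The main obstacle lies in part (2): the definition \eqref{11} furnishes only $R$-linear convergence, i.e.\ $\|x_k-\hat x\|\le\gamma c^k$ with a multiplicative constant $\gamma$ that is neither controlled a priori nor a genuine per-step contraction factor. To obtain a \emph{uniform} subtransversality modulus one cannot simply write $\|x_1-\hat x\|\le c\,\|x_0-\hat x\|$; instead I would exploit the Fej\'er monotonicity of $(\|x_k-\hat x\|)$ — a convex-analytic feature absent in the general nonconvex setting — to replace the $\gamma$-dependence by telescoped triangle-inequality bounds, and then track every constant so that the final modulus is precisely $\tfrac{1-c}{3-c}$ rather than a cruder value such as $\tfrac{1-c}{5-c}$. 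In part (1) the analogous difficulty is the sharpness of the rate $1-\str^2$, which likewise hinges on not discarding the term $d(b_k,A)^2$.
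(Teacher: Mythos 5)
First, a point of reference: the paper does not prove Proposition~\ref{P5-} at all --- it is imported from \cite{LukThaTeb} (``the next proposition taken from \cite{LukThaTeb}''), so your attempt can only be measured against what a correct argument must contain, not against an in-paper proof.

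For part (i), your toolbox (projection inequality, Fej\'er monotonicity, Proposition~\ref{P1}) is the right one, but the full-cycle estimate cannot deliver the rate $1-\str^2$, and the repair you anticipate does not close the gap. Dropping $d(b_k,A)^2$ gives $d(x_{k+1},A\cap B)^2\le(1-\alpha^2)\,d(x_k,A\cap B)^2$, i.e.\ the distance itself contracts per cycle only by $\sqrt{1-\alpha^2}$; and feeding the discarded term back in the best possible way, via $d(b_k,A)\ge\beta\,d(b_k,A\cap B)\ge\beta\,d(x_{k+1},A\cap B)$ (with $\beta$ the constant of \eqref{P3-1} for the reversed pair $\{B,A\}$), still only gives
\begin{equation*}
d(x_{k+1},A\cap B)^2\le\frac{1-\alpha^2}{1+\beta^2}\,d(x_k,A\cap B)^2,
\qquad\text{where always}\qquad
\frac{1-\alpha^2}{1+\beta^2}>(1-\alpha^2)(1-\beta^2)
\end{equation*}
(take $\alpha=\beta$ to see the left-hand factor even exceeds $(1-\alpha^2)^2$). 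So no bookkeeping along your lines, combined with \eqref{P3-2}, produces the per-cycle factor $1-\str^2$. What does work is the \emph{half-step} estimate: since $z:=P_{A\cap B}(x_k)$ lies in $B$ as well as in $A$, the projection inequality applied to $P_B$ gives $d(b_k,A\cap B)^2\le\|b_k-z\|^2\le(1-\alpha^2)\,d(x_k,A\cap B)^2$, and symmetrically $d(x_{k+1},A\cap B)^2\le(1-\beta^2)\,d(b_k,A\cap B)^2$. Multiplying, the per-cycle factor is $\sqrt{(1-\alpha^2)(1-\beta^2)}\le 1-\min\{\alpha,\beta\}^2$, and since $\str\le\min\{\strr,\,{\rm str}'[B,A](\bx)\}$ (Proposition~\ref{P1} applied to both orderings of the pair), this does not exceed $1-\str^2$; the Fej\'er bound $\|x_k-\hat x\|\le 2d(x_k,A\cap B)$ then converts the decay of $d(x_k,A\cap B)$ into the claimed $R$-linear rate.

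For part (ii), you identify the right obstacle --- the multiplicative constant in $R$-linear convergence is uncontrolled --- but you do not overcome it, and the estimates you sketch overshoot the constant. The telltale algebra is $\frac{1-c}{3-c}=\bigl(2(1-c)^{-1}+1\bigr)^{-1}$: this is exactly what \eqref{P3-2} returns when fed the one-sided bound \eqref{P3-1} with $\alpha=1-c$ (just as $\frac{1-c}{5-c}$ in Proposition~\ref{Nec_of_SubTra} is \eqref{P3-2} fed with $\alpha=\frac{1-c}{2}$). So the whole burden is to prove $d(x,A\cap B)\le\frac{1}{1-c}\,d(x,B)$ for $x\in A$ near $\bx$ --- not $\frac{2}{1-2c}$ or $\frac{2}{1-c}$, which is where your route is headed: your inequality $\|x_0-x_1\|\le 2d(x_0,B)$ and the Fej\'er factor in $\|x_k-\hat x\|\le 2d(x_k,A\cap B)$ each inject a factor $2$ that cannot be removed afterwards. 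The clean path is: (a) invoke the equivalence, valid for convex sets and recorded after Proposition~\ref{Nec_of_SubTra} (proved in \cite{LukThaTeb}), between linear convergence and linear monotonicity \eqref{e:lin mon}/\eqref{e:lin extend} --- it is this equivalence, not Fej\'er monotonicity alone, that eliminates the constant $\gamma$; (b) once each half-step contracts the distance to $A\cap B$ by the factor $c$, a single triangle inequality gives, for $x\in A$ near $\bx$, $(1-c)\,d(x,A\cap B)\le d(x,A\cap B)-d(P_Bx,A\cap B)\le\|x-P_Bx\|=d(x,B)$; (c) Proposition~\ref{P1} then yields $\str\ge\frac{1-c}{3-c}$. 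Without step (a) your argument has no uniform per-step control, and with the factor-$2$ losses it cannot reach the stated modulus even formally.
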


The picture becomes much more complicated
% changes dramatically
if the convexity assumption is dropped.
In view of the following proposition taken from \cite{LukThaTeb}, subtransversality remains a necessary condition for certain types of linear convergence of alternating projections.

\begin{proposition}\label{Nec_of_SubTra}%\cite[Theorems 4.10 and 4.14]{LukThaTeb}
Suppose $X$ is a Euclidean space, $A,B\subset X$ are closed, and $\bx\in A\cap B$.
If for any starting point $x_0$ sufficiently close to $\bx$, \begin{enumerate}
\item
either every sequence of alternating projections $(x_k)$
% determined by \eqref{ap}
is linear monotone with rate $c\in]0,1[$ in the sense %of \cite{LukThaTeb}, i.e.,
that
\begin{gather}\label{e:lin mon}
d(x_{k+1},A\cap B)\le c d(x_k,A\cap B) \quad(k=0,1,\ldots),
\end{gather}
\item
or every sequence of joining alternating projections
$(z_k)$ determined by \eqref{z_k} satisfies the following conditions for a constant $c\in ]0,1[$
%\red{
%is linearly extendable rate $c\in ]0,1[$ in the sense of \cite{LukThaTeb}, i.e., there exist a natural number $m>0$ and a sequence $(z_k)$ such that $x_k=z_{mk}$ for all $k\in \mathbb{N}$ and the following conditions are satisfied:
\begin{subequations}\label{e:lin extend}
\begin{align}\label{LinCon_Seq+'_m}
&\|z_{k+2}-z_{k+1}\|\; \le\; \|z_{k+1}-z_{k}\|, \quad(k=0,1,\ldots),
\\
\label{LinCon_Seq+'_m2}
&\|z_{2k+2}-z_{2k+1}\|\; \le\; c\|z_{2k+1}-z_{2k}\|, \quad(k=0,1,\ldots),
\end{align}
\end{subequations}
%}
%with rate $c\in]0,1[$ in the sense  that
%\begin{align*}%\label{LinCon_Seq+'_m}
%\|z_{k+2}-z_{k+1}\|&\le\|z_{k+1}-z_{k}\|, \\%\label{LinCon_Seq+'_m2}
%\|z_{2(k+1)+1}-z_{2(k+1)}\|&\le c\|z_{2k+1}-z_{2k}\|
%\quad(k=0,1,\ldots),
%\end{align*}
\end{enumerate}
then $\{A, B\}$ is subtransversal at $\bx$ and $\str\ge\frac{1-c}{5-c}$.
\end{proposition}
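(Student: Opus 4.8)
The plan is to reduce the assertion, in each of the two cases, to the one-sided distance estimate of Proposition~\ref{P1}: I will exhibit numbers $\al\in]0,1[$ and $\de>0$ such that $\al\, d(x,A\cap B)\le d(x,B)$ for all $x\in A\cap\B_{\de}(\bx)$, which gives $\strr\ge\al$, and then pass to the symmetric constant through the estimate $\str\ge 1/\paren{2(\strr)\iv+1}$ from \eqref{P3-2}. Throughout, fix a point $x\in A$ close enough to $\bx$ for the convergence hypotheses to apply, start the iteration at $x_0:=x$, and write $b_k\in P_B(x_k)$, $x_{k+1}\in P_A(b_k)$, so that $d(x_k,B)=\norm{b_k-x_k}$ and $d(b_k,A)=\norm{x_{k+1}-b_k}$. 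Since $x_k\in A$, the elementary inequality $d(b_k,A)\le\norm{b_k-x_k}=d(x_k,B)$ holds, whence $\norm{x_{k+1}-x_k}\le d(x_k,B)+d(b_k,A)\le 2\,d(x_k,B)$; in particular $\norm{x_1-x_0}\le 2\,d(x_0,B)$ and $\norm{z_1-z_0}=\norm{b_0-x_0}=d(x_0,B)$.

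For case (1) I would use only the first decay inequality $d(x_1,A\cap B)\le c\,d(x_0,A\cap B)$. Choosing a nearest point $z^*\in P_{A\cap B}(x_1)$ (which exists since $A\cap B$ is nonempty and closed), the triangle inequality gives $d(x_0,A\cap B)\le\norm{x_0-z^*}\le\norm{x_0-x_1}+d(x_1,A\cap B)\le 2\,d(x_0,B)+c\,d(x_0,A\cap B)$. Rearranging yields $(1-c)\,d(x_0,A\cap B)\le 2\,d(x_0,B)$, so $\strr\ge(1-c)/2$, and Proposition~\ref{P1} produces $\str\ge 1/\paren{4/(1-c)+1}=(1-c)/(5-c)$.

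For case (2), set $t_k:=\norm{z_{k+1}-z_k}$. The two conditions in \eqref{e:lin extend} read $t_{k+1}\le t_k$ and $t_{2k+1}\le c\,t_{2k}$; together they give $t_{2k}\le c^k t_0$ and $t_{2k+1}\le c^{k+1}t_0$, hence $\sum_{k\ge0}t_k\le \paren{(1+c)/(1-c)}\,t_0$. Thus $(z_k)$ has summable increments and converges to some $\hat x$; since $z_{2n}=x_n\in A$ and $z_{2n+1}=b_n\in B$ with $A,B$ closed, $\hat x\in A\cap B$. Using $t_0=d(x_0,B)$, I obtain $d(x_0,A\cap B)\le\norm{x_0-\hat x}\le\sum_{k\ge0}t_k\le\paren{(1+c)/(1-c)}\,d(x_0,B)$, so $\strr\ge(1-c)/(1+c)$, and Proposition~\ref{P1} gives $\str\ge(1-c)/(3+c)$. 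Since $3+c\le 5-c$ for $c\in]0,1[$, this is in fact at least $(1-c)/(5-c)$.

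In either case $\str\ge(1-c)/(5-c)>0$, so $\{A,B\}$ is subtransversal at $\bx$, as claimed. The bulk of the work, and the only genuinely delicate point, lies in case (2): establishing convergence of the joining sequence from the summability of its increments and identifying the limit inside $A\cap B$, together with the bookkeeping needed to keep the whole orbit inside the neighbourhood on which the hypotheses hold. Case (1), by contrast, reduces to a single application of the triangle inequality together with the one-step decay, and it is this case that is responsible for the stated (weaker) constant $(1-c)/(5-c)$.
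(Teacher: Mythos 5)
Your proof is correct. One point of order before the mathematics: the paper itself contains no proof of Proposition~\ref{Nec_of_SubTra} --- it is imported verbatim from \cite{LukThaTeb} --- so the comparison here can only be with your argument on its own terms, which checks out completely. Your mechanism is the natural one: verify the one-sided estimate \eqref{P3-1} of Proposition~\ref{P1} at an arbitrary starting point $x\in A\cap\B_\de(\bx)$, then convert $\strr$ into $\str$ via \eqref{P3-2}. In case (1), the chain $d(x_0,A\cap B)\le\norm{x_0-x_1}+d(x_1,A\cap B)\le 2d(x_0,B)+c\,d(x_0,A\cap B)$ (using $d(b_0,A)\le\norm{b_0-x_0}$ because $x_0\in A$) gives $\strr\ge(1-c)/2$, and $\str\ge 1/\bigl(4/(1-c)+1\bigr)=(1-c)/(5-c)$, exactly the stated constant. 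In case (2), your index bookkeeping is right: $t_{2k}\le c^k t_0$ and $t_{2k+1}\le c^{k+1}t_0$ follow by induction from $t_{k+1}\le t_k$ and $t_{2k+1}\le c\,t_{2k}$, the increments are summable with sum at most $\frac{1+c}{1-c}\,t_0=\frac{1+c}{1-c}\,d(x_0,B)$, the limit $\hat x$ lies in $A\cap B$ by closedness of both sets along the two subsequences, and one gets $\strr\ge(1-c)/(1+c)$, hence $\str\ge(1-c)/(3+c)\ge(1-c)/(5-c)$ since $3+c\le 5-c$ on $]0,1[$. Both values of $\al$ lie in $]0,1[$ as required in Proposition~\ref{P1}.

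Two small remarks, neither a gap. First, the concern you raise at the end about ``bookkeeping needed to keep the whole orbit inside the neighbourhood'' is vacuous: the hypotheses of the proposition assert \eqref{e:lin mon}, respectively \eqref{e:lin extend}, for \emph{all} $k=0,1,\ldots$ as soon as the starting point is sufficiently close to $\bx$, so no localization of the later iterates is needed; your proof in fact never uses such localization, so the remark can simply be deleted. Second, the universal quantifier ``every sequence'' in the hypotheses is used only to the extent that the particular sequence you construct (one choice of $b_k\in P_B(x_k)$, $x_{k+1}\in P_A(b_k)$, which exists since projections onto nonempty closed sets in Euclidean space are nonempty) satisfies the stated inequalities --- worth one sentence in a final write-up, but not a defect.
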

%\todo[inline]{To be checked.
%Is the term `extendable' still applicable?}

As shown in \cite{LukThaTeb}, properties \eqref{e:lin mon} and
\eqref{e:lin extend} both
imply linear convergence of alternating projections with $R$-linear rate,
and the three properties are equivalent when the sets are convex.
%A motivation for introducing the first two notions in \cite{LukThaTeb} is that they have been the only paths for formulating all (to our best awareness) linear convergence criteria for nonconvex alternating projections.
%This observation together with Proposition~\ref{Nec_of_SubTra} has led to the
It is conjectured in \cite{LukThaTeb} that subtransversality is necessary for linear convergence of (both convex and nonconvex) alternating projections.
At the same time, simple examples show that subtransversality is not sufficient to guarantee (any) convergence of alternating projections to a solution of the feasibility problem.

A study of the convergence of alternating
projections in the nonconvex setting was initiated recently by Lewis and Malick \cite{LewMal08}, and Lewis et al. \cite{LewLukMal09}, who demonstrated in the Euclidean space setting that a stronger transversality property in Definition~\ref{D2+} is sufficient for the local linear convergence of alternating projections for, respectively, a pair of smooth manifolds or a pair of arbitrary closed sets one of which is \emph{super-regular} at the reference point.
The last property holds, in particular, for convex sets and smooth manifolds.
It was shown later by Drusvyatskiy et al. \cite{DruIofLew15.2} that transversality guarantees local
linear convergence of alternating projections for a pair of closed sets in a Euclidean space without the super-regularity assumption.
The role of the transversality property in the convergence analysis of alternating projections in the nonconvex setting is studied in Drusvyatskiy et al. \cite{DruIofLew15.2}, Kruger and Thao \cite{KruTha16}, Noll and Rondepierre \cite{NolRon16}, and Kruger et al. \cite{KruLukTha}.

In view of Propositions~\ref{P5-} and \ref{Nec_of_SubTra} and the above discussion, subtransversality is close to being necessary for the local linear convergence of alternating projections for a pair of closed sets in a Euclidean space, but is not sufficient unless the sets are convex.
On the other hand, transversality is sufficient, but is far from being necessary even in the convex case.
For example, transversality always fails when the affine span of the union of the sets is not equal to the whole space, while alternating projections can still converge linearly as is the case when the sets are convex with nonempty intersection of their relative interiors.
A quest has started for the weakest regularity property lying between transversality and subtransversality and being sufficient for the local linear convergence of alternating projections.
We mention here the articles by Bauschke et al. \cite{BauLukPhaWan13.1,BauLukPhaWan13.2} utilizing \emph{restricted normal cones}, Drusvyatskiy et al. \cite{DruIofLew15.2} introducing and successfully employing \emph{intrinsic transversality}, Noll and Rondepierre \cite{NolRon16} introducing a concept of \emph{separable intersection}, with 0-separability being a weaker property than intrinsic transversality and still implying the local linear convergence of alternating projections under the additional assumption that one of the sets is 0-\emph{H\"older regular} at the reference point with respect to the other.

\section{Dual characterizations}\label{s:dual}

%From now on we assume that $A$ and $B$ are closed sets.
%and $\bx\in\bd A\cap\bd B$.
The dual criterion for the transversality property in Definition~\ref{D2+} in Asplund spaces is well known, see \cite{Kru05.1,Kru06.1,Kru09.1,KruTha13.1,KruTha15}.

\begin{theorem}\label{T0}
Suppose $X$ is Asplund, $A,B\subset X$ are closed, and $\bx\in A\cap B$. Then
$\{A,B\}$ is transversal at $\bar x$ if and only if there exist numbers $\alpha\in]0,1[$ and $\delta>0$ such that
$\|x^*_1+x^*_2\|>\alpha$ for all $a\in A\cap\B_\delta(\bx)$, $b\in B\cap\B_\delta(\bx)$, and all $x_1^*\in N_{A}(a)$ and $x_2^*\in N_{B}(b)$ satisfying $\|x^*_1\|+\|x^*_2\|=1$.
Moreover, the exact upper bound of all such $\al$ equals $\tr$.
\end{theorem}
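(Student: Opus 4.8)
The plan is to prove the quantitative assertion $\tr=\theta$, where I write $\theta$ for the supremum of all $\alpha\in]0,1[$ admitting a $\delta>0$ that makes the displayed dual estimate hold; the qualitative equivalence then follows since transversality means $\tr>0$ and the dual condition means $\theta>0$. I would establish the two inequalities separately: the necessity $\theta\ge\tr$ by an elementary argument from the primal definition, and the harder sufficiency $\tr\ge\theta$ via the Ekeland principle and the fuzzy sum rule. Throughout it is convenient to use the metric reformulations in Definition~\ref{D2+}: the intersection condition \eqref{D2-2} for necessity and the inequality \eqref{D2-3} for sufficiency.

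For $\theta\ge\tr$, I would fix $\alpha<\tr$ with an associated $\delta$ so that \eqref{D2-2} holds, shrink the neighbourhood to some $\delta'$, and take arbitrary $a\in A\cap\B_{\delta'}(\bx)$, $b\in B\cap\B_{\delta'}(\bx)$ together with $x_1^*\in N_{A}(a)$, $x_2^*\in N_{B}(b)$ satisfying $\|x_1^*\|+\|x_2^*\|=1$. For small $\rho>0$ and any $x_1,x_2$ with $\max\{\|x_1\|,\|x_2\|\}<\alpha\rho$, condition \eqref{D2-2} produces $w\in\rho\B$ with $a+x_1+w\in A$ and $b+x_2+w\in B$. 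Feeding $u=a+x_1+w$ and $v=b+x_2+w$ into the defining inequalities of the Fr\'echet normal cones \eqref{NC1} (valid up to an arbitrarily small multiple of $\|u-a\|$ and $\|v-b\|$ near $a,b$) and adding them, the cross term $\langle x_1^*+x_2^*,w\rangle$ is controlled by $\|x_1^*+x_2^*\|\rho$, while choosing $x_1,x_2$ almost aligned with $x_1^*,x_2^*$ via the norm--dual-norm duality makes $\langle x_1^*,x_1\rangle+\langle x_2^*,x_2\rangle$ as close as desired to $\alpha\rho(\|x_1^*\|+\|x_2^*\|)=\alpha\rho$. Dividing by $\rho$ and letting the errors vanish yields $\|x_1^*+x_2^*\|\ge\alpha$, so the dual estimate holds with every constant below $\alpha$; hence $\theta\ge\tr$. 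This direction needs neither Asplundness nor closedness.

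For $\tr\ge\theta$, I would fix $\alpha<\theta$ with its $\delta$ and prove that \eqref{D2-3} holds with every $\alpha'<\alpha$ on a small enough ball. Fixing $x$ near $\bx$ and $x_1,x_2$ small, set $\psi(u):=\max\{d(u+x_1,A),\,d(u+x_2,B)\}$, a nonnegative Lipschitz function whose zero set is $(A-x_1)\cap(B-x_2)$. If the error bound $\alpha'\,d(x,\{\psi=0\})\le\psi(x)$ failed, the Ekeland variational principle (Lemma~\ref{Ek}) with $\lambda=\psi(x)/\alpha'$ would furnish a point $\hat x$ with $\psi(\hat x)>0$ minimizing $\psi(\cdot)+\alpha'\|\cdot-\hat x\|$, whence some $\hat x^*\in\partial\psi(\hat x)$ has $\|\hat x^*\|\le\alpha'$. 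Applying the fuzzy sum rule (Lemma~\ref{SR}(i)) through the maximum of the two distance functions, together with the fact that a Fr\'echet subgradient of a distance function at an external point is a unit-norm normal at a near-projection, I would decompose $\hat x^*$ up to an arbitrarily small error as $x_1^*+x_2^*$ with $x_1^*\in N_{A}(a)$, $x_2^*\in N_{B}(b)$ for points $a\in A$, $b\in B$ close to $\bx$ and $\|x_1^*\|+\|x_2^*\|=1$ (the convex-combination weights coming from the max). The dual estimate then forces $\|\hat x^*\|>\alpha-\varepsilon$, contradicting $\|\hat x^*\|\le\alpha'<\alpha$. Hence \eqref{D2-3} holds, giving $\tr\ge\alpha$ and, upon taking the supremum, $\tr\ge\theta$.

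The delicate step is the sufficiency decomposition: pushing the fuzzy sum rule through the maximum of two distance functions while (i) keeping the auxiliary points $a,b$ inside $\B_\delta(\bx)$ so the dual estimate applies --- which rests on $\psi(x)$, and hence $\lambda$ and the displacement $\|\hat x-x\|$, being small for $x,x_1,x_2$ near $\bx\in A\cap B$ --- (ii) recovering genuine Fr\'echet normals rather than mere subgradients of distances, and (iii) securing the exact normalization $\|x_1^*\|+\|x_2^*\|=1$ so that the constant $\alpha$ transfers without loss. Controlling the accumulation of the several $\varepsilon$-errors, from Ekeland, from the sum rule, and from the unit-norm approximation of distance subgradients, is precisely what upgrades a qualitative equivalence to the exact equality $\tr=\theta$, and is where I expect the real work to lie.
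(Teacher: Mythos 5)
Your proposal cannot be checked against a proof in the paper, because the paper does not prove Theorem~\ref{T0}: it is cited as known from \cite{Kru05.1,Kru06.1,Kru09.1,KruTha13.1,KruTha15}. The closest internal benchmark is the machinery the paper builds for Theorems~\ref{T1} and \ref{T2} (Propositions~\ref{P4}--\ref{P8} and Lemma~\ref{l02}), and your architecture belongs to the same family: necessity by testing the primal condition \eqref{D2-2} against the definition \eqref{NC1} of Fr\'echet normals, sufficiency by Ekeland plus fuzzy subdifferential calculus, with both inequalities tracked quantitatively so that the exact equality of $\tr$ with the dual constant comes out. Your necessity half is correct and essentially complete, and you rightly note that it needs neither Asplundness nor closedness; the quantifier bookkeeping (deriving $\|x_1^*+x_2^*\|\ge\al$ for every $\al<\tr$ and then passing to strict inequality with any smaller constant) is sound.

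The gap is in the sufficiency half, at the decomposition step. You propose to apply ``the fuzzy sum rule (Lemma~\ref{SR}(i)) through the maximum of the two distance functions'', but a sum rule cannot decompose a pointwise maximum: at the tie points where $d(\hat x+x_1,A)=d(\hat x+x_2,B)>0$ (and these are exactly the points that cannot be excluded, since the non-tie case self-destructs), $\psi$ is locally neither of the two distance functions nor their sum, and Lemma~\ref{SR}(i) gives you nothing. What you actually need is a fuzzy max (or chain) rule for Fr\'echet subdifferentials of nonconvex functions, \emph{plus} the separate nontrivial lemma that a Fr\'echet subgradient of $d(\cdot,A)$ at a point off $A$ is, within any $\varepsilon$, a unit-norm Fr\'echet normal at a near-projection point of $A$. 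Both statements are true in Asplund spaces, and the second is precisely what secures your normalization $\|x_1^*\|+\|x_2^*\|=1$ without degrading the constant $\al$, but neither is among the paper's stated tools and neither is proved in your sketch, so as written the argument does not close. The paper's own machinery shows how to avoid both ingredients at once: lift to $X^3$ and work with $f(a,b,x)=\max\{\|x-a\|,\|x-b\|\}$ plus the indicator $i_{A\times B}$, as in \eqref{f}--\eqref{hatf}. Then the maximum sits inside a \emph{convex} function of the lifted variables, whose subdifferential is computed exactly by convex analysis (Lemma~\ref{l02}) --- alignment, normalization and unit norms come for free --- and the only nonsmooth splitting required is of a sum (convex max-function plus indicator), which is exactly what Lemma~\ref{SR}(i) handles, with the normals to $A$ and $B$ appearing directly from the indicator term. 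So either import the two missing lemmas with proofs or citations, or restructure your Ekeland argument on $X^3$ along the lines of Propositions~\ref{P4} and \ref{P8}; with that repair, your scheme does deliver both the equivalence and the equality of the exact bounds.
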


In finite dimensions, the above criterion admits convenient equivalent reformulations in terms of limiting normals.

\begin{corollary}\label{C00}
Suppose $\dim X<\infty$, $A,B\subset X$ are closed, and $\bx\in A\cap B$.
Then $\{A,B\}$ is transversal at $\bar x$ if and only if one of the following two equivalent conditions is satisfied:
\begin{enumerate}
\item
there exists a number $\alpha\in]0,1[$ such that ${\|x^*_1+x^*_2\|>\alpha}$
for all $x^*_1\in\overline{N}_{A}(\bar x)$ and $x^*_2\in\overline{N}_{B}(\bar x)$
satisfying
$\|x^*_1\|+\|x^*_2\|=1$;
\item
$\overline{N}_{A}(\bar x)\cap\left(-\overline{N}_{B}(\bar x)\right)=\{0\}$.
\end{enumerate}
Moreover, the exact upper bound of all $\alpha$ in {\rm (i)} equals $\tr$.
\end{corollary}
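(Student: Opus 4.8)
The plan is to deduce Corollary~\ref{C00} from Theorem~\ref{T0} by showing that, when $\dim X<\infty$, the \emph{neighbourhood} condition of Theorem~\ref{T0} (a uniform estimate over Fr\'echet normals at \emph{all} nearby points of $A$ and $B$) is equivalent to the \emph{pointwise} condition (i), stated in terms of limiting normals at $\bx$ alone, with matching exact bounds. Since $\dim X<\infty$ forces $X$ to be Asplund, Theorem~\ref{T0} is available and already gives that its neighbourhood condition is satisfiable iff $\{A,B\}$ is transversal at $\bx$, with exact upper bound $\tr$. I would introduce the quantity
\[
\theta:=\inf\bigl\{\,\|x^*_1+x^*_2\|\ \big|\ x^*_1\in\overline{N}_A(\bx),\ x^*_2\in\overline{N}_B(\bx),\ \|x^*_1\|+\|x^*_2\|=1\,\bigr\},
\]
with the convention $\inf\emptyset=+\infty$. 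Because $\dim X<\infty$, the limiting normal cones are closed and the normalised admissible pairs form a compact set, so this infimum is attained whenever finite; consequently condition (i) holds for a given $\alpha$ exactly when $\theta>\alpha$, the exact upper bound of all $\alpha\in\,]0,1[$ admissible in (i) equals $\min\{\theta,1\}$, and (i) is satisfiable iff $\theta>0$.

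Next I would dispose of the equivalence (i)$\Leftrightarrow$(ii) directly at the level of cones. If $\theta=0$, the attained minimiser is a normalised pair $(x^*_1,x^*_2)$ with $x^*_1+x^*_2=0$, so $x^*_1=-x^*_2\neq0$ lies in $\overline{N}_A(\bx)\cap(-\overline{N}_B(\bx))$ and (ii) fails. Conversely, any nonzero $y^*\in\overline{N}_A(\bx)\cap(-\overline{N}_B(\bx))$ rescales (both cones being positively homogeneous) to the normalised pair $\bigl(y^*/(2\|y^*\|),\,-y^*/(2\|y^*\|)\bigr)$ witnessing $\theta=0$. Hence $\theta>0$ is equivalent to (ii), and combined with the first paragraph this gives the equivalence of (i), (ii), and the condition $\theta>0$.

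It then remains to prove $\min\{\theta,1\}=\tr$. The easy inequality is $\theta\ge\tr$: assume the neighbourhood condition holds with some $\alpha\in\,]0,1[$ and $\delta>0$, and take any normalised limiting pair $x^*_1,x^*_2$. By the definition \eqref{NC3} of the limiting normal cone choose $a_k\to\bx$ in $A$, $b_k\to\bx$ in $B$, and $x^*_{1,k}\in N_A(a_k)$, $x^*_{2,k}\in N_B(b_k)$ with $x^*_{i,k}\to x^*_i$. For large $k$ one has $a_k,b_k\in\B_\delta(\bx)$ and $s_k:=\|x^*_{1,k}\|+\|x^*_{2,k}\|\to1$, so normalising by $s_k$ and applying Theorem~\ref{T0} gives $\|x^*_{1,k}+x^*_{2,k}\|>\alpha s_k$; letting $k\to\infty$ yields $\|x^*_1+x^*_2\|\ge\alpha$, whence $\theta\ge\alpha$, and taking the supremum over admissible $\alpha$ gives $\theta\ge\tr$ (so $\min\{\theta,1\}\ge\tr$ since $\tr\le1$).

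The reverse inequality $\tr\ge\min\{\theta,1\}$ is the main obstacle, which I would settle by contraposition using compactness. Fix $\alpha<\min\{\theta,1\}$ and suppose the neighbourhood condition fails for this $\alpha$ and every $\delta>0$; taking $\delta=1/k$ produces $a_k\in A\cap\B_{1/k}(\bx)$, $b_k\in B\cap\B_{1/k}(\bx)$, and $x^*_{1,k}\in N_A(a_k)$, $x^*_{2,k}\in N_B(b_k)$ with $\|x^*_{1,k}\|+\|x^*_{2,k}\|=1$ and $\|x^*_{1,k}+x^*_{2,k}\|\le\alpha$. Since $\dim X<\infty$, the unit sphere of the sum norm on $X^*\times X^*$ is compact, so along a subsequence $x^*_{i,k}\to x^*_i$; as $a_k,b_k\to\bx$, the definition \eqref{NC3} places $x^*_1\in\overline{N}_A(\bx)$ and $x^*_2\in\overline{N}_B(\bx)$, with $\|x^*_1\|+\|x^*_2\|=1$ and $\|x^*_1+x^*_2\|\le\alpha<\theta$, contradicting the definition of $\theta$. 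Hence the neighbourhood condition holds for every $\alpha<\min\{\theta,1\}$, giving $\tr\ge\min\{\theta,1\}$ and therefore $\tr=\min\{\theta,1\}$, which is simultaneously the exact upper bound in (i). Together with the chain (i) $\Leftrightarrow$ (ii) $\Leftrightarrow$ $\theta>0$ $\Leftrightarrow$ $\tr>0$ (the last being transversality, by Theorem~\ref{T0}), this establishes the corollary. The genuinely delicate point is precisely this compactness extraction: it is where finite-dimensionality is essential, both to pass to convergent subsequences of normals and to guarantee that their limits remain in the limiting normal cones.
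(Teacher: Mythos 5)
Your proof is correct. There is actually no in-paper proof to compare against: Corollary~\ref{C00} is stated immediately after Theorem~\ref{T0} as a known finite-dimensional reformulation (with references to the literature), so your argument fills in what the paper leaves implicit, and it does so by the natural route -- deduction from Theorem~\ref{T0}. The two directions are placed exactly where they should be: the inequality $\theta\ge\tr$ follows by approximating limiting normals with Fr\'echet normals at nearby points, normalizing, and passing to the limit in the strict inequality of Theorem~\ref{T0}; the reverse inequality $\tr\ge\min\{\theta,1\}$ is the genuinely finite-dimensional step, settled by extracting convergent subsequences of normalized Fr\'echet normals and observing that their limits land in $\overline{N}_{A}(\bar x)$ and $\overline{N}_{B}(\bar x)$ by definition \eqref{NC3}. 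Your handling of the degenerate cases is consistent with the paper's conventions: when the admissible set of normalized pairs is empty ($\theta=+\infty$, e.g.\ when $\bx\in\Int A\cap\Int B$), condition (i) holds vacuously and your formula $\tr=\min\{\theta,1\}$ correctly accounts for the cap $\al<1$ built into Definition~\ref{D2+}. One small point worth making explicit in a final write-up: the equivalence ``(i) holds for a given $\al$ iff $\theta>\al$'' uses attainment of the infimum, which in turn requires closedness of the limiting normal cones; this is standard (a diagonal argument directly from \eqref{NC3}) but deserves a sentence, since without attainment one would only get $\theta\ge\al$ and the identification of the \emph{exact} upper bound in (i) with $\tr$ would need a slightly different phrasing.
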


The property in part (ii) of Corollary~\ref{C00} is a well known qualification condition/nonseparabilty property that has been around for about 30 years under various names (\emph{basic qualification condition}, \emph{normal qualification condition}, \emph{transversality}, \emph{transversal intersection}, \emph{regular intersection}, \emph{linearly regular intersection}, and \emph{alliedness property}); cf. \cite{Mor88,Mor06.1,ClaLedSteWol98,Pen13,LewMal08, LewLukMal09,Iof16}.
When $A$ and $B$ are smooth manifolds, it coincides with \eqref{1}.

The next theorem deals with the subtransversality property in Definition~\ref{D1}.
It provides a dual sufficient condition for this property in an Asplund space.

\begin{theorem}\label{T1}
Suppose $X$ is Asplund, $A,B\subset X$ are closed, and $\bx\in A\cap B$. Then
$\{A,B\}$ is subtransversal at $\bar x$ if there exist numbers $\alpha\in]0,1[$ and $\delta>0$ such that,
for all $a\in(A\setminus B)\cap\B_\de(\bx)$, $b\in(B\setminus A)\cap\B_\de(\bx)$ and $x\in\B_\de(\bx)$ with $\norm{x-a}=\norm{x-b}$,
there exists an $\eps>0$ such that
$\|x^*_1+x^*_2\|>\alpha$ for all $a'\in A\cap\B_\eps(a)$, $b'\in B\cap\B_\eps(b)$, $x_1'\in\B_\eps(a)$, $x_2'\in\B_\eps(b)$, $x'\in\B_\eps(x)$, and $x_1^*,x_2^*\in X^*$
satisfying
\begin{gather}\label{T1-1}
\norm{x'-x_1'}=\norm{x'-x_2'},
\\\label{T1-2}
\|x^*_1\|+\|x^*_2\|=1,\quad
\ang{x^*_1,x'-x_1'}=\|x^*_1\|\|x'-x_1'\|,\quad
\ang{x^*_2,x'-x_2'}=\|x^*_2\|\|x'-x_2'\|,
\\\label{T1-3}
d(x_1^*,N_{A}(a'))<\delta,\quad d(x_2^*,N_{B}(b'))<\delta.
\end{gather}
Moreover, $\str\ge\al$.
\end{theorem}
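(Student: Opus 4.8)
The plan is to establish the metric characterization in part (ii) of Definition~\ref{D1}. Fix an arbitrary $\alpha'\in\,]0,\alpha[$; I will show that some $\delta'>0$ satisfies $\alpha'\,d(x,A\cap B)\le\max\{d(x,A),d(x,B)\}$ for all $x\in\B_{\delta'}(\bx)$, which, since $\alpha'<\alpha$ is arbitrary, yields $\str\ge\alpha$. I argue by contradiction: if this fails for every $\delta'$, there is a sequence $x_k\to\bx$ with $x_k\notin A\cap B$ and $f(x_k)<\alpha'\,d(x_k,A\cap B)$, where $f:=\max\{d(\cdot,A),d(\cdot,B)\}$ is nonnegative, Lipschitz, and satisfies $\inf_X f=0$, attained exactly on $A\cap B$.

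Next I would apply the Ekeland variational principle (Lemma~\ref{Ek}) to $f$ started at $x_k$, choosing the parameters so that the perturbation coefficient is a fixed $\alpha''\in\,]\alpha',\alpha[$ while $\lambda_k\to0$. This yields $\hat x_k\to\bx$ with $f(\hat x_k)\le f(x_k)$, $\hat x_k\notin A\cap B$, minimizing $x\mapsto f(x)+\alpha''\|x-\hat x_k\|$. I then show the distances are \emph{balanced} at $\hat x_k$, i.e.\ $d(\hat x_k,A)=d(\hat x_k,B)>0$: were, say, $d(\hat x_k,A)>d(\hat x_k,B)$, then $f=d(\cdot,A)$ near $\hat x_k$, so $0\in\partial\bigl(d(\cdot,A)+\alpha''\|\cdot-\hat x_k\|\bigr)(\hat x_k)$, and the sum rule produces a Fr\'echet subgradient of $d(\cdot,A)$ at the exterior point $\hat x_k$ of norm $\le\alpha''<1$, contradicting the fact that such subgradients have unit norm.

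With balance secured I would pass to $X\times X\times X$ and work with the (in $(x,a,b)$ convex-plus-indicator) function $(x,a,b)\mapsto\max\{\|x-a\|,\|x-b\|\}+\alpha''\|x-\hat x_k\|+\iota_A(a)+\iota_B(b)$, whose infimum in $(a,b)$ recovers $f(x)+\alpha''\|x-\hat x_k\|$. Choosing near-optimal $a_k\in A$, $b_k\in B$ (approximate projections of $\hat x_k$) and applying the fuzzy sum rule (Lemma~\ref{SR}(i)) at an error level below all relevant tolerances, I obtain, at points within that error of the minimizer, functionals $x_1^*=\lambda u$ and $x_2^*=(1-\lambda)v$, where $u,v$ are unit norming functionals of the nonzero vectors $x'-x_1'$ and $x'-x_2'$ and $\lambda\in[0,1]$. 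Exact convex calculus for the max term and the indicator contributions from $N_A(a')$, $N_B(b')$ then give, upon matching the three components, the normalization $\|x_1^*\|+\|x_2^*\|=1$, the alignment and normal-cone-proximity relations \eqref{T1-2}--\eqref{T1-3}, and $\|x_1^*+x_2^*\|\le\alpha''+(\text{error})<\alpha$. If the max is \emph{unbalanced} at the evaluation point, one of $x_1^*,x_2^*$ vanishes and the other has unit norm, contradicting $\alpha''<1$ outright; if it is balanced, then \eqref{T1-1} holds, and feeding the configuration $(a_k,b_k,\hat x_k)$ into the hypothesis and choosing the error below the resulting $\eps$ contradicts $\|x_1^*+x_2^*\|>\alpha$.

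The hard part is securing a \emph{valid} configuration: the hypothesis is invoked only for $a\in A\setminus B$, $b\in B\setminus A$ with the \emph{exact} equality $\|x-a\|=\|x-b\|$, whereas in a general Asplund space metric projections may fail to exist, so the approximate projections $a_k,b_k$ are only approximately equidistant and need not lie off the opposite set. I would restore exact equidistance by moving $\hat x_k$ to a nearby point on the metric bisector of $a_k$ and $b_k$ (at a cost controlled by the approximation error), and extract the memberships $a_k\in A\setminus B$, $b_k\in B\setminus A$ from $d(\hat x_k,A)=d(\hat x_k,B)>0$ via a careful calibration of the Ekeland parameters $\alpha''$ and $\lambda_k$. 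Propagating all of these perturbation and fuzzy-sum-rule errors simultaneously---so that the final estimate stays strictly below $\alpha$ while the configuration remains inside the tolerance $\eps$ supplied by the hypothesis---is the crux of the argument; in the Euclidean setting of \cite{KruLukTha} genuine projections exist and $(a_k,b_k,\hat x_k)$ may be taken exactly equidistant from the start, which streamlines this step.
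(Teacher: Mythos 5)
Your overall architecture---Ekeland applied in $X$ to $x\mapsto\max\{d(x,A),d(x,B)\}$, a balance argument based on the unit-norm property of Fr\'echet subgradients of distance functions at points outside the set (a true fact in any Banach space, but not among the paper's stated lemmas, so it would need its own proof), and then a lift to $X^3$ with the fuzzy sum rule and convex calculus for the max-of-norms function---is genuinely different from the paper's route: the paper works from the outset with $\hat f(a,b,x)=\max\{\norm{x-a},\norm{x-b}\}+i_{A\times B}(a,b)$ on $X^3$ equipped with the weighted norm \eqref{norm}, applies Ekeland once there (Propositions~\ref{P4}, \ref{P5}), and then splits $\partial\hat f$ by the fuzzy sum rule and Lemma~\ref{l02} (Proposition~\ref{P8}(ii)). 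Your plan is viable up to the step you yourself call the crux, but there it has two genuine gaps.

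First, the fuzzy sum rule (Lemma~\ref{SR}(i)) is a statement about $\partial(f_1+f_2)$ at a given point; to get anything from it you need $0\in\partial(\cdot)$ at the reference point, i.e.\ the reference point must be a local minimizer of your three-variable functional. The triple $(\hat x_k,a_k,b_k)$ with $a_k,b_k$ approximate projections is only an \emph{approximate} minimizer, so the rule cannot be invoked there; a second application of Ekeland, in $X^3$, is required to pass to an exact minimizer of a slightly perturbed functional. Second, and more seriously, your device for restoring exact equidistance---moving $\hat x_k$ to the metric bisector of $a_k$ and $b_k$ ``at a cost controlled by the approximation error''---fails: the bisector can lie at distance comparable to $d(\hat x_k,A)$, not to the error. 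Take $X=\R$, $\hat x_k=0$, $a_k=1$, $b_k=1+s$ with $s$ tiny (such configurations do occur, e.g.\ when the distance to $B$ is not attained); the unique equidistant point is $1+s/2$, at distance greater than $1$ from $\hat x_k$. The correct mechanism, which is in effect what the paper uses, is variational rather than geometric: at any configuration with $\norm{x-a}\ne\norm{x-b}$, or with $a\in B$ or $b\in A$, the functional admits a descent direction whose rate is bounded below by a positive constant depending only on $\alpha''$ (move $x$ towards the farther of $a,b$, respectively collapse $(a,b,x)$ to the common point), so such configurations cannot be produced by the second Ekeland step; equivalently, they may be excluded for free---this is exactly the content of Propositions~\ref{P5} and \ref{P6}, and of the dichotomy in Lemma~\ref{l02} that forces $\de_1=0$ in the proof of Proposition~\ref{P8}(ii). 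Once the base configuration fed into the hypothesis is taken to be the output of that second Ekeland step, the quantifiers also fall into place: fix it, receive $\eps$ from the hypothesis, and only then choose the fuzzy-sum-rule accuracy below $\eps$. As written, your sketch leaves these repairs unmade, so the argument is incomplete precisely at its decisive step.
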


In the convex case, one can formulate a necessary and sufficient dual criterion of subtransversality in general Banach spaces which takes a simpler form.

\begin{theorem}\label{T2}
Suppose $X$ is a Banach space, $A,B\subset X$ are closed and convex, and $\bx\in A\cap B$.
Then $\{A,B\}$ is subtransversal at $\bar x$ if and only if there exist numbers $\alpha\in]0,1[$ and $\delta>0$ such that $\|x^*_1+x^*_2\|>\alpha$
for all $a\in(A\setminus B)\cap\B_\de(\bx)$, $b\in(B\setminus A)\cap\B_\de(\bx)$, $x\in\B_\de(\bx)$ with $\norm{x-a}=\norm{x-b}$, and $x_1^*,x_2^*\in X^*$ satisfying
\begin{gather}\label{T2-1}
\|x^*_1\|+\|x^*_2\|=1,\quad
\ang{x^*_1,x-a}=\|x^*_1\|\|x-a\|,\quad
\ang{x^*_2,x-b}=\|x^*_2\|\|x-b\|,
\\\notag
d(x_1^*,N_{A}(a))<\delta,\quad d(x_2^*,N_{B}(b))<\delta.
\end{gather}
Moreover, the exact upper bound of all such $\al$ equals $\str$.
\end{theorem}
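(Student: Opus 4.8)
The plan is to reduce the statement to a subdifferential slope estimate for the convex function $f:=\max\{d(\cdot,A),d(\cdot,B)\}$, which is continuous, convex, nonnegative, and satisfies $f^{-1}(0)=A\cap B$. By part (ii) of Definition~\ref{D1}, $\{A,B\}$ is subtransversal at $\bx$ with constant $\alpha$ exactly when the error bound $\alpha\,d(x,A\cap B)\le f(x)$ holds on a ball $\B_\delta(\bx)$, and $\str$ is the supremum of all such $\alpha$. I would prove the two inequalities $\widehat\alpha\ge\str$ (necessity) and $\str\ge\widehat\alpha$ (sufficiency), where $\widehat\alpha$ denotes the exact upper bound of the $\alpha$ appearing in the dual condition; together they yield both the equivalence and the equality $\str=\widehat\alpha$. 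The engine on the dual side is that, for $x\notin A$, every element of $\partial d(\cdot,A)(x)$ has norm one and is a supporting functional aligned with $x-P_A(x)$ lying in $N_A(P_A(x))$; together with the convex max-rule $\partial f(x)=\conv\bigl(\partial d(\cdot,A)(x)\cup\partial d(\cdot,B)(x)\bigr)$ at points where $d(x,A)=d(x,B)>0$, this shows that the pairs $(x^*_1,x^*_2)$ in \eqref{T2-1} with exact normals parametrize precisely the subgradients $x^*_1+x^*_2\in\partial f(x)$, the relation $\|x^*_1\|+\|x^*_2\|=1$ encoding the convex combination.

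For necessity I would fix $\alpha_0<\str$, so that the error bound holds with $\alpha_0$ on some $\B_\delta(\bx)$. Given a configuration as in \eqref{T2-1} with exact normals $x^*_1\in N_A(a)$, $x^*_2\in N_B(b)$, the alignment identities force $a\in P_A(x)$ and $b\in P_B(x)$: indeed $x^*_1\in N_A(a)$ gives $\langle x^*_1,a'-a\rangle\le0$ for all $a'\in A$, whence $\|x^*_1\|\,\|x-a'\|\ge\langle x^*_1,x-a'\rangle\ge\langle x^*_1,x-a\rangle=\|x^*_1\|\,\|x-a\|$, so $a$ is nearest. Hence $x^*_1+x^*_2\in\partial f(x)$ with $f(x)=d(x,A)=d(x,B)>0$, and for any near-projection $s\in A\cap B$ of $x$ the subgradient inequality gives $f(x)=f(x)-f(s)\le\langle x^*_1+x^*_2,x-s\rangle\le\|x^*_1+x^*_2\|\,d(x,A\cap B)$. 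Combined with $d(x,A\cap B)\le f(x)/\alpha_0$ this yields $\|x^*_1+x^*_2\|\ge\alpha_0$, i.e. the dual condition with any constant $\alpha<\alpha_0$, so $\widehat\alpha\ge\str$.

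For sufficiency I would assume the dual condition holds with $\alpha,\delta$ and fix $\alpha'<\alpha$. Given $x\in\B_{\delta'}(\bx)$ with $f(x)>0$, Ekeland's principle (Lemma~\ref{Ek}) applied to $f$ with $\eps$ slightly above $f(x)$ and $\lambda\approx f(x)/\alpha'$ produces $\hat x$ near $x$ with $f(\hat x)\le f(x)$ minimizing $y\mapsto f(y)+(\eps/\lambda)\|y-\hat x\|$. The convex sum rule (Lemma~\ref{SR}(ii)) and the max-rule then give $\xi\in\partial f(\hat x)$ with $\|\xi\|\le\eps/\lambda=\alpha'<1$. If $f(\hat x)>0$, then necessarily $d(\hat x,A)=d(\hat x,B)>0$ with nearest points $a\in (A\setminus B)$, $b\in (B\setminus A)$: were, say, $a\in A\cap B$, moving along $\hat x_t=(1-t)\hat x+ta$ would give $f(\hat x_t)\le(1-t)f(\hat x)$ and $\|\hat x_t-\hat x\|=t\,f(\hat x)$, so the perturbed objective would drop below $f(\hat x)$ for small $t>0$ because $\eps/\lambda<1$, contradicting minimality. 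Writing $\xi=x^*_1+x^*_2$ with the induced triple $(a,b,\hat x)$ then violates $\|x^*_1+x^*_2\|>\alpha$; hence $f(\hat x)=0$, i.e. $\hat x\in A\cap B$, and $d(x,A\cap B)\le\|x-\hat x\|<\lambda\approx f(x)/\alpha'$. This is the error bound with $\alpha'$, so $\str\ge\alpha'$ for all $\alpha'<\alpha$, giving $\str\ge\widehat\alpha$.

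The main obstacle is that everything above uses nearest points $P_A(x)$ and exact normals, which need not exist in a general Banach space; this is exactly what the relaxation $d(x^*_i,N_A(a))<\delta$ in the statement is meant to absorb. The remedy I would use is to replace projections by approximate ones and subgradients of $d(\cdot,A)$ by their defining supporting functionals $x^*_1$ with $\langle x^*_1,\hat x\rangle-\sigma_A(x^*_1)=d(\hat x,A)$, then choose $a\in A$ with $\langle x^*_1,a\rangle>\sigma_A(x^*_1)-\eta$, so that $a$ is $\eta$-nearest and $\langle x^*_1,a'-a\rangle\le\eta$ for all $a'\in A$. The genuinely delicate point, and the reason the relaxed condition is stated as it is, is converting this separation-type approximate normality into the norm-estimate $d(x^*_1,N_A(a))<\delta$ at the nearby point $a$; carrying the resulting $\eta$- and $\delta$-errors through both estimates perturbs the constants only by $O(\delta)$, which vanishes on passing to the suprema. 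In the necessity direction the same relaxation enters merely as a norm perturbation $\|x^*_1+x^*_2\|\ge\alpha_0-O(\delta)$, again harmless as $\delta\downarrow0$, so the clean exact-normal equivalence and the relaxed one share the same exact bound $\str$.
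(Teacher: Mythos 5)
Your reduction to the function $f=\max\{d(\cdot,A),d(\cdot,B)\}$ on $X$ is a genuinely different route from the paper, which instead lifts everything to $X^3$ and works with $\hat f(a',b',u)=\max\{\norm{u-a'},\norm{u-b'}\}+i_{A\times B}(a',b')$ under the $\rho$-weighted norm \eqref{norm}. Your \emph{necessity} direction is essentially correct and in fact more elementary than the paper's chain (Propositions~\ref{P7}(ii) and \ref{P8}(iii)): the normal-cone inequality $\ang{u_1^*,s-a}\le0$ for $s\in A\cap B$, combined with the alignment identities and the error bound of Definition~\ref{D1}(ii), gives $\|x_1^*+x_2^*\|\ge\al_0-O(\de)$ directly, and the case where one of $\|x_i^*\|$ is small is harmless since then $\|x_1^*+x_2^*\|$ is close to $1$.

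The \emph{sufficiency} direction, however, has a genuine gap, and it is not the one you flagged. After Ekeland and the max-rule you hold a supporting functional $\xi_1\in\partial d(\cdot,A)(\hat x)$, i.e.\ $\|\xi_1\|=1$ and $\ang{\xi_1,\hat x}-\sup_{a'\in A}\ang{\xi_1,a'}=d(\hat x,A)$, and you must manufacture a triple $(a,b,\hat x)$ with \emph{exact} alignment $\ang{x_1^*,\hat x-a}=\|x_1^*\|\norm{\hat x-a}$, \emph{exact} equality $\norm{\hat x-a}=\norm{\hat x-b}$, and only \emph{approximate} normality $d(x_1^*,N_A(a))<\de$ --- because the dual hypothesis of Theorem~\ref{T2} constrains only such configurations. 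Since nearest points need not exist in a Banach space, an approximate nearest point $a\in A$ gives only $\ang{\xi_1,\hat x-a}\ge\norm{\hat x-a}-\eta$, i.e.\ approximate alignment, so no contradiction with the dual hypothesis is available. Repairing this requires either (a) perturbing $\xi_1$ to a functional exactly aligned with the \emph{fixed} vector $\hat x-a$ while staying norm-close to $\xi_1$ --- a Bishop--Phelps--Bollob\'as-type statement with the vector frozen, which fails in general Banach spaces (Bollob\'as' theorem must perturb the vector too; with it fixed, the conclusion amounts to strong subdifferentiability of the norm); or (b) choosing by Hahn--Banach a functional $\zeta_1$ exactly aligned with $\hat x-a$, but then there is no reason for $\norm{\la_1\zeta_1+\la_2\zeta_2}$ to remain below $\al'$, nor for $\zeta_1$ to be near $N_A(a)$: in $\R^2$ with the maximum norm, $A=\{(s,t)\mid s\le0\}$, $\hat x=(1,0)$, $a=(0,1)$ is an exact nearest point, yet the aligned functional $(0,-1)$ is at unit distance from $N_A(a)=\R_+(1,0)$. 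Br\o ndsted--Rockafellar, which you implicitly invoke to pass from $\eps$-normality to genuine normality, moves the base point from $a$ to some nearby $a''$ and thereby destroys exact alignment again. This is precisely the needle the paper threads by design: applying Ekeland on $X^3$ and then the \emph{exact} convex sum rule (Lemma~\ref{SR}(ii)) splits the stationarity condition at a single point $(a,b,x)$ into a subgradient of the max-of-norms part --- which by Lemma~\ref{l02} is automatically exactly aligned and forces $\norm{x-a}=\norm{x-b}$ --- plus exact normals $u_i^*\in N_A(a)\times N_B(b)$, with the $\rho$-weighted dual norm \eqref{normd} guaranteeing $\|x_i^*-u_i^*\|<\rho$. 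In short, your scheme proves the theorem when projections exist and the norm is nice (e.g.\ finite dimensions), but not in the generality claimed.
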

%\todo[inline]{Does the dual characterization in Theorem~\ref{T1} reduce to that in Theorem~\ref{T2} when the sets are convex?
%There should be a way to simplify the concluding part in Theorem~\ref{T1}.}

\begin{remark}%\label{R2+}
1. It is sufficient to check the conditions of  Theorems~\ref{T0}, \ref{T1} and \ref{T2} only for $x^*_1\ne0$ and $x^*_2\ne0$.
Indeed, if one of the vectors $x^*_1$ and $x^*_2$ equals 0, then by the normalization condition $\|x^*_1\|+\|x^*_2\|=1$, the norm of the other one equals 1, and consequently $\|x^*_1+x^*_2\|=1$, i.e., such pairs $x^*_1,x^*_2$ do not impose any restrictions on $\al$.

2. Similarly to the classical condition \eqref{1}, the (sub)transversality characterizations in Theorems~\ref{T0}, \ref{T1} and \ref{T2} require that among all admissible (i.e., satisfying all the conditions of the theorems) pairs of nonzero elements $x^*_1$ and $x^*_2$ there is no one with $x^*_1$ and $x^*_2$ oppositely directed.

3. The sum $\|x^*_1\|+\|x^*_2\|$ in Theorems~\ref{T0}, \ref{T1} and \ref{T2} corresponds to the sum norm on $\R^2$, which is dual to the maximum norm on $\R^2$ used in Definitions~\ref{D1} and \ref{D2+}.
It can be replaced by $\max\{\|x^*_1\|,\|x^*_2\|\}$ (cf. \cite[(6.11)]{Pen13}) or any other norm on $\R^2$.

4. Condition \eqref{D1-2} is equivalent to the inequality $\alpha d\left(x,A\cap B\right)\le \max\left\{\|x-a\|,\|x-b\|\right\}$ holding for all triples $x\in\B_{\delta}(\bar{x})$, $a\in A$ and $b\in B$.
Since $\bx\in A\cap B$, it is sufficient to check this inequality only for those triples which satisfy $\alpha \|x-\bx\|>\max\left\{\|x-a\|,\|x-b\|\right\}$.
This simple observation shows that the statement of Theorem~\ref{T1} can be slightly strengthened by adding the following condition: $\norm{x-a}=\norm{x-b}<\alpha\|x-\bx\|$.
\xqed\end{remark}

The proof of Theorem~\ref{T1} follows the sequence proposed in \cite{Kru15} when deducing metric subregularity characterizations for \SVM s and consists of a series of propositions providing lower primal and dual estimates for the constant $\str$ and, thus, sufficient conditions for the subtransversality of the pair $\{A,B\}$ at $\bx$ which can be of independent interest.

First observe that constant $\str$ characterizing subtransversality and introduced in Definition~\ref{D1} can be written explicitly as
\begin{equation}\label{errcon}
\str= \liminf_{\substack{a\to\bar{x},\, b\to\bar x,\, x\to\bar x\\a\in A,\;b\in B,\; x\notin A\cap B}} \frac{f(a,b,x)} {d\left(x,A\cap B\right)}= \liminf_{\substack{a\to\bar{x},\, b\to\bar x,\, x\to\bar x\\x\notin A\cap B}} \frac{\hat f(a,b,x)} {d\left(x,A\cap B\right)},
\end{equation}
with the convention that the infimum over the empty set equals 1, and the functions $f: X^{3}\to\R$ and $\hat f: X^{3}\to\R_\infty$ defined, respectively, by
\begin{gather}\label{f}
f(x_1,x_2,x):=\max\{\norm{x_1-x},\norm{x_2-x}\},\quad x_1,x_2,x\in X,
\\\label{hatf}
\hat f(x_1,x_2,x):= f(x_1,x_2,x)+i_{A\times B}(x_1,x_2),\quad x_1,x_2,x\in X,
\end{gather}
where $i_{A\times B}$ is the indicator function of $A\times B$: $i_{A\times B}(x_1,x_2)=0$ if $x_1\in A$, $x_2\in B$ and $i_{A\times B}(x_1,x_2)=+\infty$ otherwise.

Below,
we are going to use two different norms on $X^3$: a norm depending on a parameter $\rho>0$ and defined as follows:
\begin{equation}\label{norm}
\norm{(x_1,x_2,x)}_{\rho} :=\max\left\{\norm{x},\rho \norm{x_1},\rho \norm{x_2}\right\},\quad x_1,x_2,x\in X,
\end{equation}
and the conventional maximum norm $\norm{(\cdot,\cdot,\cdot)}$ corresponding to $\rho=1$ in the above definition;
we drop the subscript $\rho$ in this case.
It is easy to check that the dual norm corresponding to \eqref{norm} has the following form:
\begin{equation}\label{normd}
\norm{(x^*_1,x^*_2,x^*)}_{\rho} =\|x^*\|+\rho^{-1}(\|x^*_1\|+\|x^*_2\|),\quad x^*_1,x^*_2,x^*\in X^*.
\end{equation}

The next proposition provides an equivalent primal space representation of the subtransversality constant \eqref{errcon}.
Its proof is based on the application of the Ekeland variational principle (Lemma~\ref{Ek}).

\begin{proposition}\label{P4}
Suppose $X$ is a Banach space, $A,B\subset X$ are closed, and $\bx\in A\cap B$.
Then the following representation of the subtransversality constant \eqref{errcon} is true:
\begin{align}\label{str}
\str&= \lim\limits_{\rho\downarrow0} \inf\limits_{\substack{a\in A\cap\B_\rho(\bx),\; b\in B\cap\B_\rho(\bx)\\x\in\B_\rho(\bx),\; \max\{\norm{x-a},\norm{x-b}\}>0}}
\sup_{\substack{a'\in A,\,b'\in B,\, u\in X\\(a',b',u)\neq (a,b,x)}} \frac{\left(f(a,b,x)- f(a',b',u)\right)_+} {\norm{(a',b',u)-(a,b,x)}_{\rho}},
\end{align}
with the convention that the infimum over the empty set equals 1.
\end{proposition}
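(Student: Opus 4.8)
The plan is to read the inner supremum in \eqref{str} as the \emph{nonlocal slope} of $\hat f$ with respect to the parametrised norm \eqref{norm}, and then to establish the representation as an instance of the identity ``error bound modulus $=$ limiting infimum of nonlocal slopes'', with the asymmetric scaling in \eqref{norm} arranged so that, as $\rho\downarrow0$, the $\rho$-distance to the zero set of $\hat f$ collapses onto $d(x,A\cap B)$. Write $S:=\{(w,w,w)\mid w\in A\cap B\}$ for the zero set of $\hat f$, and for $a\in A$, $b\in B$, $x\in X$ with $f(a,b,x)>0$ put
\[
\theta_\rho(a,b,x):=\sup_{\substack{a'\in A,\,b'\in B,\,u\in X\\(a',b',u)\ne(a,b,x)}}\frac{\bigl(f(a,b,x)-f(a',b',u)\bigr)_+}{\norm{(a',b',u)-(a,b,x)}_\rho}.
\]
Since $\hat f(a',b',u)=+\infty$ unless $(a',b')\in A\times B$, the constraints $a'\in A$, $b'\in B$ are automatic and $\theta_\rho(a,b,x)$ is exactly the nonlocal slope of $\hat f$ at $(a,b,x)$. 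Denoting by $\sigma(\rho)$ the infimum of $\theta_\rho$ over the local region indicated in \eqref{str}, the right-hand side of \eqref{str} is $\lim_{\rho\downarrow0}\sigma(\rho)$ (the degenerate case $\bx\in\Int(A\cap B)$ being absorbed by the empty-set convention). I would then prove $\liminf_{\rho\downarrow0}\sigma(\rho)\ge\str$ and $\limsup_{\rho\downarrow0}\sigma(\rho)\le\str$.

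For the lower estimate it suffices to test the slope against the diagonal direction: for every $w\in A\cap B$ one has $f(w,w,w)=0$, so choosing $(a',b',u)=(w,w,w)$ gives
\[
\theta_\rho(a,b,x)\ge\frac{f(a,b,x)}{\max\{\norm{x-w},\,\rho\norm{a-w},\,\rho\norm{b-w}\}}.
\]
Taking $w$ a near-projection of $x$ onto $A\cap B$ and writing $r:=d(x,A\cap B)$, the triangle inequality yields $\norm{a-w},\norm{b-w}\le f(a,b,x)+r$, so the denominator is at most $\max\{r,\rho(f(a,b,x)+r)\}$. A short case split finishes the argument: either the term $r$ dominates the $\rho$-scaled terms, in which case the denominator equals $r$ and $\theta_\rho(a,b,x)\ge f(a,b,x)/r\ge\str-\eps$ by the definition \eqref{errcon} of $\str$ on a small enough ball; or a $\rho$-scaled term dominates, forcing $r<2\rho f(a,b,x)$ and hence $\theta_\rho(a,b,x)\ge 1/(\rho(1+2\rho))$, which exceeds $\str$ once $\rho$ is small. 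In both cases $\sigma(\rho)\ge\str-\eps$ for all small $\rho$.

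For the upper estimate I would invoke Lemma~\ref{Ek}. Choose $a_k\in A$, $b_k\in B$, $x_k\notin A\cap B$ with $a_k,b_k,x_k\to\bx$ and $f(a_k,b_k,x_k)/d(x_k,A\cap B)\to\str$. Fix a small $\rho>0$, set $\gamma:=\str+\eps$, $\bar\eps_k:=\hat f(a_k,b_k,x_k)=f(a_k,b_k,x_k)$ and $\lambda_k:=\bar\eps_k/\gamma$. As $\inf\hat f=0$, applying Ekeland's principle to the \lsc{} function $\hat f$ on the complete space $(X^3,\norm{\cdot}_\rho)$ produces $(\hat a,\hat b,\hat u)$ within $\rho$-distance $\lambda_k$ of $(a_k,b_k,x_k)$ and minimising $\hat f+(\bar\eps_k/\lambda_k)\norm{\cdot-(\hat a,\hat b,\hat u)}_\rho$, whence $\theta_\rho(\hat a,\hat b,\hat u)\le\bar\eps_k/\lambda_k=\gamma$. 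The essential bookkeeping is that for large $k$ the ratio lies below $\gamma$, so $\lambda_k<d(x_k,A\cap B)$, and since the unscaled $x$-coordinate of the $\rho$-norm forces the $\norm{\cdot}_\rho$-distance from $(a_k,b_k,x_k)$ to $S$ to be at least $d(x_k,A\cap B)$, the perturbed minimiser stays off $S$ (so $f(\hat a,\hat b,\hat u)>0$ and $\hat a\in A$, $\hat b\in B$); because $\lambda_k\to0$ with $\rho$ fixed, it also lies in $\B_\rho(\bx)$. Hence $\sigma(\rho)\le\gamma=\str+\eps$ for every small $\rho$.

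Combining the two estimates and letting $\eps\downarrow0$ gives $\lim_{\rho\downarrow0}\sigma(\rho)=\str$, which is \eqref{str}. I expect the Ekeland step to be the main obstacle: the radius $\lambda_k$ must be taken large enough that the exact-slope bound $\bar\eps_k/\lambda_k$ stays near $\str$, yet small enough (below $d(x_k,A\cap B)$, which is precisely what the unscaled coordinate of the $\rho$-norm controls from below) that the perturbed minimiser cannot jump onto the solution set $S$. Guaranteeing this separation uniformly is exactly the purpose of the asymmetric norm \eqref{norm}, and verifying the accompanying membership and positivity conditions ($\hat a\in A$, $\hat b\in B$, $f(\hat a,\hat b,\hat u)>0$, localisation in $\B_\rho(\bx)$) is where the technical care is concentrated.
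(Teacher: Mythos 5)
Your proposal is correct and takes essentially the same route as the paper's proof: the inequality $\str\le R$ is obtained by testing the supremum at diagonal points $(w,w,w)$ with $w\in A\cap B$ a near-projection of $x$ (with the same case split on which term of the $\rho$-norm dominates), and the inequality $R\le\str$ by applying the Ekeland principle (Lemma~\ref{Ek}) to $\hat f$ on $(X^3,\norm{\cdot}_\rho)$ with the radius kept strictly below $d(x,A\cap B)$, which is exactly the mechanism behind the paper's choice $\lambda=\mu(1-\mu^{\frac{\rho}{2-\rho}})$ ensuring the perturbed point stays off $A\cap B$. Your deviations are cosmetic (a minimizing sequence with $\lambda_k=\bar\varepsilon_k/\gamma$ instead of the paper's single point with explicit $\eta$ and $\lambda$), and the marginal non-strictness in the Ekeland hypothesis (taking $\varepsilon$ equal to the current function value) is a slip the paper's proof makes as well.
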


\begin{proof}
Let $R$ denote the expression in the \RHS\ of \eqref{str}.
We first show that $\str\le R$.
If $\str=0$, the inequality holds trivially.
Let $0<\al<\str$.
%We are going to show that $R\ge\al$.
By \eqref{errcon}, there is a $\delta>0$ such that
\begin{equation}\label{3ar}
\frac{f(a,b,x)} {d\left(x,A\cap B\right)}>\al
\end{equation}
for all $a\in A\cap\B_\de(\bx)$, $b\in B\cap\B_\de(\bx)$ and $x\in\B_\de(\bx)$ with $x\notin A\cap B$.
Choose a positive $\rho<\min\{\de,(\al+1)\iv\}$ and any $a\in A\cap\B_\rho(\bx)$, $b\in B\cap\B_\rho(\bx)$ and $x\in\B_\rho(\bx)$ with $\max\{\norm{x-a},\norm{x-b}\}>0$.
If $x\notin A\cap B$, then, in view of \eqref{3ar}, one can find a $u\in A\cap B$ such that
$$
\frac{f(a,b,x)} {\norm{u-x}}>\al.
$$
Then,
\begin{align*}
\frac{f(a,b,x)-f(u,u,u)} {\norm{(u,u,u)-(a,b,x)}_{\rho}}
&=\frac{f(a,b,x)} {\max\{\norm{u-x},\rho\norm{u-a},\rho\norm{u-b}\}}
\\
&\ge\frac{f(a,b,x)} {\max\{\norm{u-x},\rho(\norm{u-x}+\norm{x-a}), \rho(\norm{u-x}+\norm{x-b})\}}
\\
&=\frac{f(a,b,x)} {\max\{\norm{u-x},\rho(\max\{\norm{x-a}, \norm{x-b}\}+\norm{u-x})\}}
\\
&=\frac{f(a,b,x)} {\max\{\norm{u-x},\rho(f(a,b,x)+\norm{u-x})\}}
\\
&=\min\left\{\frac{f(a,b,x)}{\norm{u-x}}, \frac{1} {\rho\left(1+\left(\frac{f(a,b,x)} {\norm{u-x}}\right)\iv\right)}\right\}> \al.
\end{align*}
If $x\in A\cap B$, then
\begin{align}\label{22}
\frac{f(a,b,x)-f(x,x,x)} {\norm{(x,x,x)-(a,b,x)}_{\rho}} =\frac{f(a,b,x)} {\max\{\rho\norm{x-a},\rho\norm{x-b}\}} =\rho\iv>\al+1>\al.
\end{align}
Combining the two cases, we obtain
\begin{align*}
\inf\limits_{\substack{a\in A\cap\B_\rho(\bx),\; b\in B\cap\B_\rho(\bx)\\x\in\B_\rho(\bx),\; \max\{\norm{x-a},\norm{x-b}\}>0}}
\sup_{\substack{a'\in A,\,b'\in B,\, u\in X\\(a',b',u)\neq (a,b,x)}} \frac{\left(f(a,b,x)- f(a',b',u)\right)_+} {\norm{(a',b',u)-(a,b,x)}_{\rho}} \ge\al.
\end{align*}
The claimed inequality follows after letting $\rho\downarrow0$ and $\al\uparrow\str$.

Now we show the opposite inequality: $R\le\str$.
%If $\str=\infty$, the inequality holds trivially.
Let $\str<\alpha<\infty$.
Choose an $\al'>0$ and a $\rho>0$ such that $\str<\alpha'<\alpha$ and $\rho<1-\al'/\al$, and set
\begin{equation}\label{28}
\eta:= \min\left\{\frac{\rho}{4},\frac{\rho}{2\alpha'}, \rho^{\frac{2}{\rho}}\right\}.
\end{equation}
By \eqref{errcon}, there are $\hat a\in A$, $\hat b\in B$ and $\hat x\in B_{\eta}(\bar x)\setminus(A\cap B)$ such that
\begin{equation}\label{29}
f(\hat a,\hat b,\hat x)<\alpha' d\left(\hat x,A\cap B\right).
\end{equation}
As $\hat x\notin A\cap B$, we have either $\hat x\ne\hat a$ or $\hat x\ne\hat b$; hence $\varepsilon:=f(\hat a,\hat b,\hat x)>0$.
Denote $\mu:=d\left(\hat x,A\cap B\right)$.
Then $0<\eps<\alpha'\mu$ and $\mu\le \norm{\hat x-\bar x}\le \eta\le \frac{\rho}{4}<1$.
Applying to the \lsc\ function \eqref{hatf} the Ekeland variational principle (Lemma~\ref{Ek}) with $\varepsilon$ as above and
%\todo{To be formulated in the Introduction}
\begin{equation}\label{30}
\lambda:=\mu(1-\mu^{\frac{\rho}{2-\rho}})>0,
\end{equation}
we find points $a\in A$, $b\in B$ and $x\in X$ such that
\begin{equation}\label{31}
\|(a,b,x)-(\hat a,\hat b,\hat x)\|_{\rho}<\lambda,\quad f(a,b,x)\le f(\hat a,\hat b,\hat x),
\end{equation}
and
\begin{equation}\label{32}
f(a',b',u)+\frac{\varepsilon}{\lambda} \norm{(a',b',u)-(a,b,x)}_{\rho} \ge f(a,b,x)\qdtx{for all} (a',b',u)\in A\times B\times X.
\end{equation}
Thanks to \eqref{31}, \eqref{30}, \eqref{28} and \eqref{29}, we have
$$
\norm{x-\hat x}<\lambda <\mu \le \norm{\hat x-\bar x},
$$
\begin{equation}\label{33}
d\left(x,A\cap B\right)\ge d\left(\hat x,A\cap B\right)- \norm{x-\hat x}>\mu-\lambda =\mu^{\frac{2}{2-\rho}},
\end{equation}
\begin{equation}\label{34}
\norm{x-\bar x}\le \norm{x-\hat x}+\norm{\hat x-\bar x}<2\norm{\hat x-\bar x}\le 2\eta\le\frac{\rho}{2},
\end{equation}
\begin{equation}\label{35}
f(a,b,x)\le f(\hat a,\hat b,\hat x)<\alpha'\mu\le \alpha'\eta\le\frac{\rho}{2}.
\end{equation}
It follows from \eqref{33} that $x\notin A\cap B$, and consequently, either $x\ne a$ or $x\ne b$.
Besides, by \eqref{34} and \eqref{35},
$$
\norm{x-\bar x}<\rho\qdtx{and} \max\{\norm{a-\bx},\norm{b-\bx}\} \le \max\{\norm{x-a},\norm{x-b}\}+\norm{x-\bar x}<\rho.
$$
Observe that $\mu^{\frac{\rho}{2-\rho}}\le \eta^{\frac{\rho}{2-\rho}}< \eta^{\frac{\rho}{2}}\le \rho$, and consequently, by \eqref{29} and \eqref{30},
$$
\frac{\varepsilon}{\lambda} <\frac{\alpha'\mu}{\lambda} =\frac{\alpha'}{1-\mu^{\frac{\rho}{2-\rho}}} <\frac{\alpha'}{1-\rho}<\alpha.
$$
Thanks to \eqref{32} and \eqref{f}, we have
$$
f(a,b,x)-f(a',b',u)\le \al\norm{(a',b',u)-(a,b,x)}_{\rho} \qdtx{for all} (a',b',u)\in A\times B\times X.
$$
It follows that
\begin{align*}
\inf\limits_{\substack{a\in A\cap\B_\rho(\bx),\; b\in B\cap\B_\rho(\bx)\\x\in\B_\rho(\bx),\; \max\{\norm{x-a},\norm{x-b}\}>0}}
\sup_{\substack{a'\in A,\,b'\in B,\, u\in X\\(a',b',u)\neq (a,b,x)}} \frac{f(a,b,x)- f(a',b',u)} {\norm{(a',b',u)-(a,b,x)}_{\rho}} \le\al.
\end{align*}
Taking limits in the last inequality as $\rho\downarrow 0$ and $\alpha\downarrow\str$ yields the claimed inequality.
\qed\end{proof}

\begin{remark}\label{R3}
1. The \RHS\ of \eqref{str} is the \emph{uniform strict outer slope} \cite{Kru15} of the function \eqref{hatf} (considered as a function of two variables $x$ and $(x_1,x_2)$) at $(\bx,(\bx,\bx))$.

2. The inequality `$\le$' in \eqref{str} is valid in arbitrary (not necessarily complete) normed linear spaces.
The completeness of the space $X$ is only needed for the inequality `$\ge$', the proof of which is based on the application of the Ekeland variational principle.
\xqed\end{remark}

The next proposition provides another two primal space  representations of the subtransversality constant \eqref{errcon} which impose additional restrictions on the choice of $a$, $b$ and $x$ under the $\inf$ in \eqref{str}.

\begin{proposition}\label{P5}
Suppose $X$ is a Banach space, $A,B\subset X$ are closed, and $\bx\in A\cap B$.
Then the following representations of the subtransversality constant \eqref{errcon} are true:
\begin{align}\notag
\str&=\lim\limits_{\rho\downarrow0} \inf\limits_{\substack{a\in(A\setminus B)\cap\B_\rho(\bx),\;b\in(B\setminus A)\cap\B_\rho(\bx)\\x\in\B_\rho(\bx)}}
\sup_{\substack{a'\in A,\,b'\in B,\, u\in X\\(a',b',u)\neq (a,b,x)}} \frac{\left(f(a,b,x)- f(a',b',u)\right)_+} {\norm{(a',b',u)-(a,b,x)}_{\rho}}
\\\label{str+}
&=\lim\limits_{\rho\downarrow0} \inf\limits_{\substack{a\in(A\setminus B)\cap\B_\rho(\bx),\;b\in(B\setminus A)\cap\B_\rho(\bx)\\x\in\B_\rho(\bx),\; \norm{x-a}=\norm{x-b}}}
\sup_{\substack{a'\in A,\,b'\in B,\, u\in X\\(a',b',u)\neq (a,b,x)}} \frac{\left(f(a,b,x)- f(a',b',u)\right)_+} {\norm{(a',b',u)-(a,b,x)}_{\rho}},
\end{align}
with the convention that the infimum over the empty set equals 1.
\end{proposition}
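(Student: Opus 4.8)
The plan is to sandwich the two limits in \eqref{str+} between $\str$ on both sides, using Proposition~\ref{P4} for one inequality and a sharpening of its Ekeland construction for the other. Denote by $R_1$ and $R_2$ the right-hand sides of the first and second representations in \eqref{str+}, and recall that by \eqref{str} the constant $\str$ is the same limit taken over the \emph{larger} index set $a\in A\cap\B_\rho(\bx)$, $b\in B\cap\B_\rho(\bx)$, $x\in\B_\rho(\bx)$ with $\max\{\norm{x-a},\norm{x-b}\}>0$. For a triple $(a,b,x)$ write $S_\rho(a,b,x)$ for the inner supremum
\[
S_\rho(a,b,x):=\sup_{\substack{a'\in A,\,b'\in B,\,u\in X\\(a',b',u)\neq(a,b,x)}}\frac{\bigl(f(a,b,x)-f(a',b',u)\bigr)_+}{\norm{(a',b',u)-(a,b,x)}_{\rho}}.
\]

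First I would settle the easy inequality $\str\le R_1\le R_2$. Every triple admissible in the second representation is admissible in the first, and every triple admissible in the first one is admissible in \eqref{str}: indeed $a\notin B$ and $b\in B$ force $a\ne b$, hence $\max\{\norm{x-a},\norm{x-b}\}>0$. Shrinking the index set of an infimum only increases it, so passing to the limit $\rho\downarrow0$ and invoking Proposition~\ref{P4} yields $\str\le R_1\le R_2$. It remains to prove $R_2\le\str$, and for this it suffices to produce, for each $\alpha$ with $\str<\alpha<1$ and all sufficiently small $\rho$, a triple admissible in the second representation with $S_\rho(a,b,x)<\alpha$. (We may assume $\str<1$; the borderline value $\str=1$ is covered by the convention on empty index sets.)

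Here I would rerun the Ekeland-based construction from the proof of Proposition~\ref{P4} \emph{verbatim}: with $\alpha$ fixed and $\alpha'$, $\rho$, $\eta$, $\lambda$ chosen exactly as there, applying Lemma~\ref{Ek} to the function $\hat f$ of \eqref{hatf} delivers a triple $(a,b,x)$ with $a\in A$ and $b\in B$ inside $\B_\rho(\bx)$, with $x\in\B_\rho(\bx)$, $x\notin A\cap B$, and with $S_\rho(a,b,x)\le\varepsilon/\lambda<\alpha<1$. The crux is that this strict bound below $1$ alone forces $(a,b,x)$ into the doubly restricted index set of \eqref{str+}; once this is shown, $R_2\le\alpha$ follows, and letting $\rho\downarrow0$ and $\alpha\downarrow\str$ finishes. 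I would verify the three defining properties by exhibiting competitors that, when a property fails, drive $S_\rho$ up to at least $1$. If $a\in B$, take $(a',b',u)=(a,a,a)$: then $f(a,a,a)=0$, while $\norm{x-a}\le f(a,b,x)$ and, using $\norm{a-b}\le 2\max\{\norm{x-a},\norm{x-b}\}$ together with $\rho\le\tfrac12$, also $\rho\norm{a-b}\le f(a,b,x)$, so the denominator $\max\{\norm{x-a},\rho\norm{a-b}\}$ is at most $f(a,b,x)$ and the quotient is $\ge1$; the case $b\in A$ is symmetric via $(b,b,b)$. If instead $\norm{x-a}\neq\norm{x-b}$, say $\norm{x-a}>\norm{x-b}$, I move only the free slot along $u_s:=x+s(a-x)$ with $s>0$ small; then $f(a,b,u_s)=(1-s)\norm{x-a}$ and both numerator and denominator equal $s\norm{x-a}$, so the quotient is identically $1$. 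Hence $a\in A\setminus B$, $b\in B\setminus A$ and $\norm{x-a}=\norm{x-b}$, i.e.\ $(a,b,x)$ is admissible in the second representation.

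The main obstacle is precisely this forcing step. The key realization is that one must \emph{not} try to perturb the Ekeland point into the restricted index set (its slope estimate \eqref{32} is a nonlocal quantity and would be lost under perturbation); rather, membership is automatic, being a direct consequence of $S_\rho<1$. The only quantitative points needing care are the elementary bound $\norm{a-b}\le 2\max\{\norm{x-a},\norm{x-b}\}$ used---together with $\rho\le\tfrac12$---in the first competitor, and the continuity argument choosing $s$ small enough in the second so that the $a$-term realizes the maximum defining $f(a,b,u_s)$; both are routine.
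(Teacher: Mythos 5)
Your core argument is correct where it applies, and it rests on exactly the same two computations as the paper's proof: the competitor $(a,a,a)$ (resp.\ $(b,b,b)$) showing the inner supremum is at least $1$ whenever $a\in A\cap B$ (resp.\ $b\in A\cap B$), and the competitor obtained by sliding $u$ toward the farther of $a$, $b$ showing the same whenever $\norm{x-a}\ne\norm{x-b}$. The packaging differs: the paper never re-runs Ekeland in this proof; it applies those two facts as an exclusion principle directly to the infima (index points whose supremum is $\ge 1$ ``can be excluded'' because $\str\le1$), whereas you re-run the Ekeland construction of Proposition~\ref{P4} to produce a witness triple with supremum $<\alpha<1$ and then use the same two facts, contrapositively, to force the witness into the restricted index set. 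For $\str<1$ both routes are valid and essentially equivalent; yours costs a second pass through Ekeland, the paper's costs the exclusion step.

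The genuine gap is your parenthetical claim that ``the borderline value $\str=1$ is covered by the convention on empty index sets.'' That is false: $\str=1$ does not make the index sets in \eqref{str+} empty. Take $X=\R$, $A=(-\infty,0]$, $B=[0,+\infty)$, $\bx=0$: then $d(x,A\cap B)=|x|=\max\{d(x,A),d(x,B)\}$ for every $x$, so $\str=1$, while $(A\setminus B)\cap\B_\rho(\bx)$ and $(B\setminus A)\cap\B_\rho(\bx)$ are nonempty for every $\rho>0$. In this situation your argument cannot even start, since there is no $\alpha$ with $\str<\alpha<1$, and the case is not a routine patch: in that example every admissible triple---e.g.\ $a<x<b$ with $\norm{x-a}=\norm{x-b}$, so $x=(a+b)/2$---admits the competitor $(a+\delta,b-\delta,x)$ with $\delta>0$ small, whose numerator is $\delta$ and whose $\rho$-norm is $\rho\delta$, so every inner supremum is at least $1/\rho$ and the restricted infima do not stay below $1$ at all. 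To be fair, the paper's own exclusion step suffers from the same blind spot (deleting index points with value $\ge1$ from an infimum is harmless only if that infimum stays below $1$), so your proof is no worse off than the paper's; but the specific claim you use to dismiss the boundary case is incorrect, and you should either impose $\str<1$ as an explicit hypothesis or give a genuine argument for $\str=1$ rather than attribute it to the empty-set convention.
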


\begin{proof}
Let $R$, $R_1$ and $R_2$ denote the \RHS\ of \eqref{str}, and the first and the second expressions in \eqref{str+}, respectively.
Comparing the sets of restrictions on the choice of $a$, $b$ and $x$ under the $\inf$ in these expressions, it is easy to observe that $R\le R_1\le R_2$.
Next we show that both inequalities hold as equalities.
%Next we prove the opposite inequalities.

\underline{$R=R_1$}.
Let $a\in A$, $b\in B$, $x\in X$,
$\max\{\norm{x-a},\norm{x-b}\}>0$, and $\rho\in]0,1/2[$.
If $b\in A$, i.e., $b\in A\cap B$, then $f(b,b,b)=0$
and
\begin{align*}
\norm{(b,b,b)-(a,b,x)}_{\rho} &=\max\left\{\norm{b-x},\rho \norm{b-a}\right\} \le\max\left\{\norm{b-x},\rho (\norm{b-x}+\norm{a-x})\right\}
\\
&\le\max\left\{1,2\rho\right\} \max\left\{\norm{b-x},\norm{a-x}\right\} =\max\left\{\norm{b-x},\norm{a-x}\right\}.
\end{align*}
Similarly, if $a\in B$, i.e., $a\in A\cap B$, then $f(a,a,a)=0$ and
\begin{align*}
\norm{(a,a,a)-(a,b,x)}_{\rho} \le\max\left\{\norm{b-x},\norm{a-x}\right\}.
\end{align*}
Thus, in both cases,
\begin{equation}\label{P5P-1}
\sup_{\substack{u\in X,\,a'\in A,\,b'\in B\\(a',b',u)\neq (a,b,x)}} \frac{f(a,b,x)- f(a',b',u)} {\norm{(a',b',u)-(a,b,x)}_{\rho}}\ge1.
\end{equation}
Since $\str\le1$, all points $a$ and $b$ with either $a\in A\cap B$ or $b\in A\cap B$
can be excluded when computing $\str$ using \eqref{str}.
This proves $R=R_1$.

\underline{$R_1=R_2$}.
Let $a\in A$, $b\in B$, $x\in X$ and $\rho>0$.
If $\norm{x-a}<\norm{x-b}$, then $f(a,b,x)=\norm{x-b}$.
Taking $u_t:=x-t(x-b)$ for $t>0$, we have
$f(a,b,u_t)=(1-t)\norm{x-b}$ for all sufficiently small $t>0$, and $\norm{(a,b,u_t)-(a,b,x)}_{\rho} =\norm{u_t-x}=t\norm{x-b}$.
Hence,
\begin{equation}\label{P5P-2}
\frac{f(a,b,x)- f(a,b,u_t)} {\norm{(a,b,u_t)-(a,b,x)}_{\rho}}=1.
\end{equation}
Similarly, if $\norm{x-b}<\norm{x-a}$, then we can take $u_t:=x-t(x-a)$ to arrive at the same equality \eqref{P5P-2} for all sufficiently small $t>0$.
Thus, in both cases,
inequality \eqref{P5P-1} holds, and points with $\norm{x-a}\ne\norm{x-b}$
can be excluded when computing $\str$ using the first representation in \eqref{str+}.
%This proves $R_1=R_2$.
\qed\end{proof}

\begin{remark}%\label{R3}
The expression after $\sup$ in the \RHS s of \eqref{str} and \eqref{str+} can be greater than 1 (see \eqref{22} when $\rho<1$).
Nevertheless, $\str$ computed in accordance with \eqref{str} or \eqref{str+} (under the conventions employed in Propositions~\ref{P4} and \ref{P5}) is always less than or equal to 1.
\xqed\end{remark}

Now we define a `localized' subtransversality constant:
\begin{gather}\label{str1}
{\rm str}_1[A,B](\bar x):= \lim\limits_{\rho\downarrow0} \inf\limits_{\substack{a\in(A\setminus B)\cap\B_\rho(\bx),\;b\in(B\setminus A)\cap\B_\rho(\bx)\\x\in\B_\rho(\bx)}}
\limsup_{\substack{a'\to a,\,b'\to b,\, u\to x\\ a'\in A,\,b'\in B\\(a',b',u)\neq (a,b,x)}} \frac{\left(f(a,b,x)- f(a',b',u)\right)_+} {\norm{(a',b',u)-(a,b,x)}_{\rho}},
\end{gather}
with the convention that the infimum over the empty set equals 1.
It corresponds to the first expression in \eqref{str+} with $\sup$ replaced by $\limsup$.
Observe that
\begin{equation*}
\limsup_{\substack{u\to x,\,a'\to a,\,b'\to b\\ a'\in A,\,b'\in B\\(a',b',u)\neq (a,b,x)}} \frac{\left(f(a,b,x)- f(a',b',u)\right)_+} {\norm{(a',b',u)-(a,b,x)}_{\rho}}.
\end{equation*}
in the above definition
is the \emph{$\rho$-slope} \cite{Kru15} (i.e., the %(strong)
slope \cite{DMT,Aze03,Iof00_,FabHenKruOut10} with respect to the distance in $ X^3$ corresponding to the norm defined by \eqref{norm}) at $(x,(a,b))$ of the function $(u,(a',b'))\mapsto f(a',b',u)$.

\begin{proposition}\label{P6}
Suppose $X$ is a normed linear space, $A,B\subset X$ are closed, and $\bx\in A\cap B$.
Then the following representation of the subtransversality constant \eqref{str1} is true:
\begin{gather}\label{str1"}
{\rm str}_1[A,B](\bar x)= \lim\limits_{\rho\downarrow0} \inf\limits_{\substack{a\in(A\setminus B)\cap\B_\rho(\bx),\;b\in(B\setminus A)\cap\B_\rho(\bx)\\x\in\B_\rho(\bx),\; \norm{x-a}=\norm{x-b}}}
\limsup_{\substack{a'\to a,\,b'\to b,\, u\to x\\ a'\in A,\,b'\in B\\(a',b',u)\neq (a,b,x)}} \frac{\left(f(a,b,x)- f(a',b',u)\right)_+} {\norm{(a',b',u)-(a,b,x)}_{\rho}},
\end{gather}
with the convention that the infimum over the empty set equals 1.
\end{proposition}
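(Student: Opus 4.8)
The plan is to follow the scheme used for the equality $R_1=R_2$ in the proof of Proposition~\ref{P5}, the only change being that the inner $\sup$ there is replaced by an inner $\limsup$. Denote by $R_1'$ and $R_2'$ the right-hand sides of \eqref{str1} and \eqref{str1"}, respectively, so that ${\rm str}_1[A,B](\bar x)=R_1'$ and the assertion to be proved is $R_1'=R_2'$. Since for each $\rho>0$ the infimum in \eqref{str1"} is taken over the subset of those triples $(a,b,x)$ from the index set of \eqref{str1} that additionally satisfy $\norm{x-a}=\norm{x-b}$, the inequality $R_1'\le R_2'$ is immediate, and the whole task reduces to proving $R_2'\le R_1'$, i.e. that the triples with $\norm{x-a}\ne\norm{x-b}$ may be discarded from the infimum in \eqref{str1} without changing its value.

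To that end I would show that every admissible triple $(a,b,x)$ with $\norm{x-a}\ne\norm{x-b}$ makes the inner $\limsup$ at least $1$. Suppose, say, $\norm{x-a}<\norm{x-b}$ (the reverse case is symmetric, with the roles of $a$ and $b$ interchanged). Then $f(a,b,x)=\norm{x-b}>0$, and for $u_t:=x-t(x-b)$ one has, for all sufficiently small $t>0$, that $f(a,b,u_t)=(1-t)\norm{x-b}$ while $\norm{(a,b,u_t)-(a,b,x)}_\rho=\norm{u_t-x}=t\norm{x-b}$; hence the corresponding ratio equals $1$. Since $(a,b,u_t)\to(a,b,x)$ and $(a,b,u_t)\neq(a,b,x)$ as $t\downarrow0$, this is precisely the construction behind \eqref{P5P-2}, and the point is that it furnishes a genuine sequence converging to $(a,b,x)$, so it bounds the $\limsup$ from below by $1$ just as well as it bounded the $\sup$ in Proposition~\ref{P5}.

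Consequently, for each $\rho>0$ the infimum in \eqref{str1} splits as the minimum of the infimum over the balanced triples (the quantity defining $R_2'$) and an infimum over the unbalanced triples that is bounded below by $1$. To see that the unbalanced part is irrelevant I would invoke the bound $R_2'\le1$: for each fixed $\rho$ the inner $\limsup$ in \eqref{str1"} is majorized by the inner $\sup$ figuring in the second (balanced) representation in \eqref{str+}, so passing to the limit $\rho\downarrow0$ and appealing to Proposition~\ref{P5} gives $R_2'\le\str\le1$, the last inequality being the one recorded after Definition~\ref{D1}. Letting $\rho\downarrow0$ in the splitting then yields $R_1'=\min\{R_2',\,(\ge1)\}=R_2'$, as required.

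I expect the only delicate point to be the interplay between the inner $\limsup$ and the outer $\lim_{\rho\downarrow0}\inf$: one has to ensure that the decomposition of the infimum into balanced and unbalanced parts, the lower bound $1$ on the unbalanced part, and the bound $R_2'\le1$ all survive the passage to the limit, and that they are consistent with the convention that the infimum over an empty set equals $1$. Since these are exactly the limit manipulations already carried out for Proposition~\ref{P5}, I anticipate no new idea beyond the $u_t$ construction and the elementary comparison $\limsup\le\sup$.
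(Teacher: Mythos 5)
Your first two steps coincide with the paper's own proof: the inequality ${\rm str}_1[A,B](\bar x)\le R_2'$ is immediate from comparing the index sets, and the $u_t$ construction behind \eqref{P5P-2} shows that every admissible triple with $\norm{x-a}\ne\norm{x-b}$ has inner $\limsup$ at least $1$, since $(a,b,u_t)\to(a,b,x)$ as $t\downarrow0$ makes the ratio a genuine limit and not merely a value attained by the $\sup$. The gap is in how you justify discarding the unbalanced triples. You prove $R_2'\le 1$ via the chain $R_2'\le R_2=\str\le1$, where $R_2$ is the second expression in \eqref{str+} and the middle equality is Proposition~\ref{P5}. But Proposition~\ref{P5} assumes $X$ is a \emph{Banach} space, and the half of Propositions~\ref{P4}--\ref{P5} you actually need, namely $R_2\le\str$, is exactly the half whose proof rests on the Ekeland variational principle and hence on completeness (see Remark~\ref{R3}.2: only the inequality $\str\le R$ survives in incomplete spaces). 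Proposition~\ref{P6}, however, is stated for an arbitrary normed linear space --- this weaker hypothesis is precisely why it is formulated separately from Propositions~\ref{P4} and \ref{P5}. So your argument, as written, proves the representation \eqref{str1"} only under a completeness assumption that the statement does not make.

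The paper's proof avoids this by never routing through $\str$: both quantities in \eqref{str1} and \eqref{str1"} are treated directly, the unbalanced triples are excluded at the level of definition \eqref{str1} on the sole ground that each of them contributes a value $\ge1$ to the infimum, and no appeal to Propositions~\ref{P4}--\ref{P5} (hence no Ekeland argument) is made, which is what keeps the statement valid in incomplete normed spaces. Concretely, your splitting $I_\rho=\min\{I_\rho^{=},I_\rho^{\ne}\}$ with $I_\rho^{\ne}\ge1$ already yields $I_\rho=I_\rho^{=}$ whenever $I_\rho^{=}\le1$, and this is the observation the paper relies on; what must not enter the argument is the identity $R_2=\str$. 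In the Banach (in particular Asplund) setting where Proposition~\ref{P6} is subsequently used --- Proposition~\ref{P8} and Theorems~\ref{T1}--\ref{T2} --- your proof is correct and even makes explicit a point the paper leaves terse; but as a proof of the proposition in its stated generality it has a genuine gap.
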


\begin{proof}
The proof follows that of the last equality in \eqref{str+}.
Comparing \eqref{str1} and \eqref{str1"}, we immediately get the inequality ${\rm str}_1[A,B](\bar x)\le R$, where $R$ denotes the \RHS\ of \eqref{str1"}.
If $a\in A$, $b\in B$, $x\in X$, and $\norm{x-a}<\norm{x-b}$, we take $u_t:=x-t(x-b)$ for $t>0$ and arrive at the equality \eqref{P5P-2} valid for all sufficiently small $t>0$.
This yields an analogue of the inequality \eqref{P5P-1} with $\sup$ replaced by $\limsup$ as in \eqref{str1}.
The same argument applies in the case $\norm{x-b}<\norm{x-a}$.
As a result, the points with $\norm{x-b}\ne\norm{x-a}$
can be excluded when computing ${\rm str}_1[A,B](\bar x)$ using definition \eqref{str1}.
This proves representation \eqref{str1"}.
\qed\end{proof}

\begin{remark}
One can define an analogue of ${\rm str}_1[A,B](\bar x)$ using the limiting procedure in the representation of $\str$ in \eqref{str}.
Unlike the `nonlocal' case in Propositions~\ref{P4} and \ref{P5}, such an analogue does not coincide in general with ${\rm str}_1[A,B](\bar x)$ defined by \eqref{str1}, although it can still be used for formulating sufficient conditions of subtransversality.
%\todo{Example?}
In this paper, we are not going to use quantities defined with the help of the limiting procedure in the representation of $\str$ in \eqref{str}.
\xqed\end{remark}

The next proposition clarifies the relationship between ${\rm str}_1[A,B](\bar x)$ and $\str$.

\begin{proposition}\label{P7}
Suppose $X$ is a Banach space, $A,B\subset X$ are closed, and $\bx\in A\cap B$.
Then
\begin{enumerate}
\item
${\rm str}_1[A,B](\bar x)\le\str$;
\item
if $A$ and $B$ are convex, then {\rm (i)} holds as equality.
\end{enumerate}
\end{proposition}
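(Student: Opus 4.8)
The plan is to work directly from the representations already at hand: the localized constant ${\rm str}_1[A,B](\bx)$ from \eqref{str1} and the constant $\str$ in the first form of \eqref{str+} established in Proposition~\ref{P5}. Crucially, both are built by applying the \emph{same} outer operation $\lim_{\rho\downarrow0}\inf$ over the \emph{same} index set of triples $(a,b,x)$ with $a\in(A\setminus B)\cap\B_\rho(\bx)$, $b\in(B\setminus A)\cap\B_\rho(\bx)$, $x\in\B_\rho(\bx)$; they differ only in the inner operation, a $\limsup$ over $(a',b',u)\to(a,b,x)$ for ${\rm str}_1[A,B](\bx)$ versus a global $\sup$ over all $(a',b',u)\in A\times B\times X$ for $\str$. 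So everything reduces to comparing these two inner quantities pointwise in $(a,b,x)$.

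For part (i), the observation is that the $\limsup$ runs only over those $(a',b',u)$ approaching $(a,b,x)$, a subfamily of the points entering the $\sup$. Hence, for every fixed $\rho>0$ and every admissible $(a,b,x)$,
\[
\limsup_{\substack{a'\to a,\,b'\to b,\,u\to x\\ a'\in A,\,b'\in B,\,(a',b',u)\neq(a,b,x)}}\frac{\left(f(a,b,x)-f(a',b',u)\right)_+}{\norm{(a',b',u)-(a,b,x)}_{\rho}}\ \le\ \sup_{\substack{a'\in A,\,b'\in B,\,u\in X\\(a',b',u)\neq(a,b,x)}}\frac{\left(f(a,b,x)-f(a',b',u)\right)_+}{\norm{(a',b',u)-(a,b,x)}_{\rho}}.
\]
Taking the infimum over the common index set and passing to the limit $\rho\downarrow0$ gives ${\rm str}_1[A,B](\bx)\le\str$, with the convention on the empty set (both sides equal $1$) disposing of the degenerate case.

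For part (ii) I must establish the reverse pointwise inequality, namely that the global $\sup$ does not exceed the local $\limsup$, and here I would exploit convexity. With $A,B$ convex, the function $f$ in \eqref{f} is convex (a maximum of two norms composed with linear maps), and for any $(a',b',u)\in A\times B\times X$ the segment $(a_t,b_t,u_t):=(1-t)(a,b,x)+t(a',b',u)$, $t\in(0,1]$, again lies in $A\times B\times X$ and converges to $(a,b,x)$ as $t\downarrow0$. Convexity yields $f(a,b,x)-f(a_t,b_t,u_t)\ge t\bigl(f(a,b,x)-f(a',b',u)\bigr)$, positive homogeneity of $\norm{\cdot}_{\rho}$ yields $\norm{(a_t,b_t,u_t)-(a,b,x)}_{\rho}=t\,\norm{(a',b',u)-(a,b,x)}_{\rho}$, and dividing (using $(tc)_+=t\,c_+$ for $t>0$) shows the difference quotient along the segment dominates the quotient at $(a',b',u)$. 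Since $(a_t,b_t,u_t)\to(a,b,x)$, this quotient feeds into the inner $\limsup$, so the $\limsup$ at $(a,b,x)$ is at least the quotient for every $(a',b',u)$, hence at least the $\sup$. Combining with part (i) gives ${\rm str}_1[A,B](\bx)=\str$.

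I expect the segment/convexity step of part (ii) to be the crux: it is precisely the membership $(a_t,b_t,u_t)\in A\times B\times X$ that requires convexity of $A$ and $B$ and that can fail otherwise, which is why equality need not hold in the nonconvex setting. Part (i), by contrast, is essentially a tautology relating a $\limsup$ over a subfamily to a global $\sup$, and the remaining verifications (convexity of $f$ and positive homogeneity of $\norm{\cdot}_{\rho}$) are routine.
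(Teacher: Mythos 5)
Your proposal is correct and follows essentially the same route as the paper: part (i) is the tautological comparison of the local $\limsup$ against the global $\sup$ over the common index set given by the first representation in \eqref{str+}, and part (ii) is exactly the paper's convexity argument, moving along the segment $(a,b,x)+t\bigl((a',b',u)-(a,b,x)\bigr)$ (which stays in $A\times B\times X$ by convexity) and using convexity of $f$ together with positive homogeneity of $\norm{\cdot}_\rho$ to show the difference quotients along the segment dominate the quotient at $(a',b',u)$, so that the inner $\limsup$ majorizes the inner $\sup$.
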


\begin{proof}
(i) is an immediate consequence of the definition \eqref{str1} and the first representation in \eqref{str+} (or Proposition~\eqref{P6} and the second representation in \eqref{str+}).

(ii) Let $A$ and $B$ be convex.
Then function $f$ defined by \eqref{f} is convex.
For any $a\in A\setminus B$, $b\in B\setminus A$, $x\in X$, we have $f(a,b,x)>0$.
Hence, for any $\rho>0$, and any $a'\in A$, $b'\in B$ and $u\in X$ with $f(a',b',u)<f(a,b,x)$ (such a triple exists, e.g., $f(\bx,\bx,\bx)=0$), we have
\begin{align*}
\frac{f(a,b,x)- f(a',b',u)} {\norm{(a,b,x)-(a',b',u)}_{\rho}} &\le\lim_{t\downarrow0}\frac{f(a,b,x)- f((a,b,x)+t((a',b',u)-(a,b,x))} {\norm{(a,b,x)-((a,b,x)+t((a',b',u)-(a,b,x))}_{\rho}}
\\
&\le\limsup_{\substack{u\to x,\,a'\to a,\,b'\to b\\ a'\in A,\,b'\in B\\(a',b',u)\neq (a,b,x)}} \frac{\left(f(a,b,x)- f(a',b',u)\right)_+} {\norm{(a',b',u)-(a,b,x)}_{\rho}}.
\end{align*}
In view of the first representation in \eqref{str+} and definition \eqref{str1}, we have $\str\le{\rm str}_1[A,B](\bar x)$.
In view of (i), this proves (ii).
\qed\end{proof}

\begin{remark}
Proposition~\ref{P7} is valid in arbitrary (not necessarily complete) normed linear spaces if $\str$ is defined by
one of the expressions in \eqref{str+} (see Remark~\ref{R3}.2).
\sloppy
\xqed\end{remark}

To proceed to dual characterizations of subtransversality, we need a representation of the subdifferential of the convex function $f$ given by \eqref{f}.
It is computed in the next lemma which improves (in the current setting) \cite[Lemma~4.2]{KruTha15}.

\begin{lemma}\label{l02}
Let $X$ be a normed space and
$f$ be given by \eqref{f}.
Then
\begin{gather}\label{l02a-1}
\sd f(x_1,x_2,x) =\left\{(x_1^*,x_2^*,-x_{1}^*-x_{2}^*)\in (X^*)^3\mid (x_1^*,x_2^*)\in \sd g(x_1-x,x_2-x)\right\},\quad x_1,x_2,x\in X,
\end{gather}
where $g$ is the maximum norm on $X^2$:
\begin{gather}\label{l02a-2}
g(x_1,x_2):=\max\{\norm{x_1},\norm{x_2}\},\quad x_1,x_2\in X.
\end{gather}
If $x_1\ne x$ or $x_2\ne x$, then $(x_1^*,x_2^*,x^*)\in \sd f(x_1,x_2,x)$ if and only if the following conditions are satisfied:
\begin{gather*}
x_{1}^*+x_{2}^*+x^*=0,
\quad
\|x_1^*\|+\|x_2^*\|=1,
\\
\langle x_1^*,x_1-x\rangle= \|x_1^*\|\norm{x_1-x},
\quad
\langle x_2^*,x_2-x\rangle= \|x_2^*\|\norm{x_2-x},
\\
\mbox{if}\;
\norm{x_1-x}<\norm{x_2-x},
\quad\mbox{then}\quad
x_1^*=0,
\\
\mbox{if}\;
\norm{x_2-x}<\norm{x_1-x},
\quad\mbox{then}\quad
x_2^*=0.
\end{gather*}
\end{lemma}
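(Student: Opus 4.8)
The plan is to reduce the computation of $\sd f$ to that of $\sd g$ via the chain rule for the linear map $L:X^3\to X^2$, $L(x_1,x_2,x):=(x_1-x,x_2-x)$, since clearly $f=g\circ L$. First I would note that $L$ is a surjective bounded linear operator, so the convex chain rule gives $\sd f(x_1,x_2,x)=L^*\bigl(\sd g(L(x_1,x_2,x))\bigr)=L^*\bigl(\sd g(x_1-x,x_2-x)\bigr)$. Computing the adjoint $L^*:(X^*)^2\to(X^*)^3$ directly from $\ip{L(x_1,x_2,x)}{(y_1^*,y_2^*)}=\ip{x_1-x}{y_1^*}+\ip{x_2-x}{y_2^*}$, one reads off $L^*(y_1^*,y_2^*)=(y_1^*,y_2^*,-y_1^*-y_2^*)$. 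This immediately yields the representation \eqref{l02a-1}, with the third coordinate forced to be $-x_1^*-x_2^*$.

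The second task is to describe $\sd g$ explicitly for the maximum norm $g$ on $X^2$. The clean route is to write $g=\max\{\norm{\cdot}_1,\norm{\cdot}_2\}$ where $\norm{(x_1,x_2)}_i:=\norm{x_i}$ are the two seminorms, and use the subdifferential of a max of convex functions together with the standard formula for the subdifferential of a norm. Recall that for the norm $\norm{\cdot}$ on $X$, we have $\sd\norm{\cdot}(u)=\{u^*\in X^*\mid\norm{u^*}\le1,\ \ip{u^*}{u}=\norm{u}\}$ when $u\neq0$ and $\sd\norm{\cdot}(0)=\B^*$. At a point $(u_1,u_2)=(x_1-x,x_2-x)$ with, say, $\norm{u_1}<\norm{u_2}$, only the second seminorm is active, so $\sd g(u_1,u_2)=\{0\}\times\sd\norm{\cdot}(u_2)$, forcing $x_1^*=0$; symmetrically for the reverse inequality. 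When the two are equal and nonzero, $\sd g$ is the convex hull of the two active pieces, which produces the normalization $\norm{x_1^*}+\norm{x_2^*}=1$ together with the alignment conditions $\ip{x_i^*}{u_i}=\norm{x_i^*}\norm{u_i}$. Assembling these cases gives precisely the list of conditions in the second half of the lemma, under the standing hypothesis $x_1\neq x$ or $x_2\neq x$ (i.e. $(u_1,u_2)\neq0$).

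Concretely I would verify that $(x_1^*,x_2^*)\in\sd g(u_1,u_2)$ is equivalent to $\norm{x_1^*}+\norm{x_2^*}\le1$ together with $\ip{x_1^*}{u_1}+\ip{x_2^*}{u_2}=g(u_1,u_2)$, the second being the subgradient inequality forced to equality at the point itself (by positive homogeneity of $g$). One then argues that this inner-product condition, combined with $\ip{x_i^*}{u_i}\le\norm{x_i^*}\norm{u_i}\le\norm{x_i^*}g(u_1,u_2)$, saturates both inequalities, which simultaneously yields the Hölder alignment equalities $\ip{x_i^*}{u_i}=\norm{x_i^*}\norm{u_i}$, the normalization $\norm{x_1^*}+\norm{x_2^*}=1$ (the sum cannot be strictly less than $1$ once equality is attained and $(u_1,u_2)\neq0$), and the vanishing of $x_i^*$ on the inactive coordinate when $\norm{u_i}<g(u_1,u_2)$.

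The main obstacle I anticipate is the bookkeeping in the equality case of $\sd g$: one must show not merely that the convex-hull description holds, but that it collapses to the neat system of three equalities and two conditional implications stated in the lemma. The subtlety is confirming that the normalization is an \emph{equality} $\norm{x_1^*}+\norm{x_2^*}=1$ rather than an inequality, which requires exploiting that $(u_1,u_2)\neq0$ so that $g(u_1,u_2)>0$ and the saturation argument above applies; the degenerate case $(u_1,u_2)=0$, where $\sd g(0)$ is the full dual unit ball and the normalization fails, is exactly the one excluded by the hypothesis $x_1\neq x$ or $x_2\neq x$, so I would flag that boundary case explicitly.
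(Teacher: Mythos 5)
Your proposal is correct and follows essentially the same route as the paper: the convex chain rule applied to $f=g\circ L$ with the adjoint $L^*(y_1^*,y_2^*)=(y_1^*,y_2^*,-y_1^*-y_2^*)$, followed by the standard description of $\sd g$ at a nonzero point via the dual (sum) norm and the alignment condition, decomposed into the stated equalities by the saturation argument you outline. Your alternative max-of-active-pieces route for $\sd g$ is a harmless variant, and your explicit flagging of the excluded case $(x_1-x,x_2-x)=(0,0)$ matches the paper's hypothesis exactly.
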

\begin{proof}
The convex function $f$ given by \eqref{f} is a composition of the continuous linear mapping
\begin{gather}\label{l02ap-1}
(x_1,x_2,x)\mapsto(x_1-x,x_2-x)
\end{gather}
from $X^3$ to $X^2$ and the norm \eqref{l02a-2} on $X^2$.
The mapping adjoint to \eqref{l02ap-1} is from $(X^*)^2$ to $(X^*)^3$ and is of the form
\begin{gather*}
(x_1^*,x_2^*)\mapsto(x_1^*,x_2^*,-x_{1}^*-x_{2}^*).
\end{gather*}
Representation \eqref{l02a-1} is a consequence of the standard convex chain rule (cf., e.g., \cite[Theorem~4.2.2]{IofTik79}).

The dual norm corresponding to \eqref{l02a-2} is of the form $(x_1^*,x_2^*)\mapsto\|x_1^*\|+\|x_2^*\|$.
Hence (cf., e.g., \cite[Subection~0.3.2]{IofTik79}, \cite[Corollary~2.4.16]{Zal02}), if $(x_1,x_2)\ne0$, then
\begin{gather}\label{l02ap-2}
\sd g(x_1,x_2)=\left\{(x_1^*,x_2^*)\in (X^*)^2\mid \|x_1^*\|+\|x_2^*\|=1,\; \ang{(x_1^*,x_2^*),(x_1,x_2)} =\max\{\norm{x_1},\norm{x_2}\}\right\}.
\end{gather}
If $\|x_1^*\|+\|x_2^*\|=1$, then the last condition in \eqref{l02ap-2} is equivalent to the following group of conditions:
\begin{gather*}
\langle x_1^*,x_1\rangle= \|x_1^*\|\norm{x_1},
\quad
\langle x_2^*,x_2\rangle= \|x_2^*\|\norm{x_2},
\\
\mbox{if}\;
\norm{x_1}<\norm{x_2},
\quad\mbox{then}\quad
x_1^*=0,
\\
\mbox{if}\;
\norm{x_2}<\norm{x_1},
\quad\mbox{then}\quad
x_2^*=0.
\end{gather*}
The second part of the proposition follows now from the representation \eqref{l02a-1}.
\qed\end{proof}

The subtransversality constant \eqref{str1} admits dual estimates which are crucial for the conclusions of Theorems~\ref{T1} and \ref{T2}.
In what follows we will use notations $\itrd{w}$ and $\itrd{c}$ for the supremum of all $\al$ in Theorems~\ref{T1} and \ref{T2}, respectively, with the convention that the supremum over the empty set equals 0.
It is easy to check the following explicit representations of the two constants:

\begin{align}\notag
\itrd{w}:=& \lim_{\rho\downarrow0} \inf_{\substack{a\in(A\setminus B)\cap\B_\rho(\bx),\;b\in(B\setminus A)\cap\B_\rho(\bx)\\x\in\B_\rho(\bx),\; \norm{x-a}=\norm{x-b}}}
\\\label{itrw}
&\liminf_{\substack{x'\to x,\;x_1'\to a,\;x_2'\to b,\; a'\to a,\;b'\to b\\a'\in A,\;b'\in B,\; \|x'-x_1'\|=\|x'-x_2'\|\\ d(x_1^*,N_{A}(a'))<\rho,\; d(x_2^*,N_{B}(b'))<\rho,\; \|x_1^*\|+\|x_2^*\|=1
\\
\langle x_1^*,x'-x_1'\rangle=\|x_1^*\|\,\|x'-x_1'\|,\;
\langle x_2^*,x'-x_2'\rangle=\|x_2^*\|\,\|x'-x_2'\|}}
\|x^*_1+x^*_2\|,
\\\label{itrc}
\itrd{c}:=& \liminf_{\substack{x\to\bx,\;a\to\bx,\;b\to\bx\\a\in A\setminus B,\;b\in B\setminus A,\; \norm{x-a}=\norm{x-b}\\ d(x_1^*,N_{A}(a))\to0,\;d(x_2^*,N_{B}(b))\to0,\; \|x_1^*\|+\|x_2^*\|=1
\\
\langle x_1^*,x-a\rangle=\|x_1^*\|\,\|x-a\|,\;
\langle x_2^*,x-b\rangle=\|x_2^*\|\,\|x-b\|}}
\|x^*_1+x^*_2\|,
\end{align}
with the convention that the infimum over the empty set equals 1.
%The relationship between the above two constants is straightforward.
%\todo[inline]{There should be a way to simplify definition \eqref{itrw}.}

\begin{proposition}\label{P8}
Suppose $X$ is a Banach space, $A,B\subset X$ are closed, and $\bx\in A\cap B$.
\begin{enumerate}
\item
If either $X$ is Asplund or $A$ and $B$ are convex, then the following dual representations of the subtransversality constant \eqref{str1} are true:
\begin{align}\notag
{\rm str}_1[A,B](\bar x)&= \lim\limits_{\rho\downarrow0} \inf\limits_{\substack{a\in(A\setminus B)\cap\B_\rho(\bx),\;b\in(B\setminus A)\cap\B_\rho(\bx)\\x\in\B_\rho(\bx)\\ (x^*_1,x^*_2,x^*)\in\partial\hat f(a,b,x),\; \norm{x^*_1}+\norm{x^*_2}<\rho}}
\norm{x^*}
\\\label{str1'}
&= \lim\limits_{\rho\downarrow0} \inf\limits_{\substack{a\in(A\setminus B)\cap\B_\rho(\bx),\;b\in(B\setminus A)\cap\B_\rho(\bx)\\x\in\B_\rho(\bx),\; \norm{x-a}=\norm{x-b}\\ (x^*_1,x^*_2,x^*)\in\partial\hat f(a,b,x),\; \norm{x^*_1}+\norm{x^*_2}<\rho}}
\norm{x^*},
\end{align}
where the function $\hat f:X^3\to\R_\infty$ is defined by \eqref{hatf}
and the convention that the infimum over the empty set equals 1 is in force.
Moreover,
\item
if $X$ is Asplund, then
${\rm str}_1[A,B](\bar x)\ge\itrd{w}$;
\item
if $A$ and $B$ are convex, then ${\rm str}_1[A,B](\bar x)=\itrd{c}$.
\end{enumerate}
\end{proposition}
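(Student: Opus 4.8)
The plan is to read the inner $\limsup$ in \eqref{str1} as the local slope, in the $\rho$-norm \eqref{norm}, of the lower semicontinuous function $\hat f$ from \eqref{hatf} at the point $(a,b,x)$, and then to convert this primal slope into a dual quantity. For part (i) I would first record the elementary one-sided estimate valid in any normed space: if $(x_1^*,x_2^*,x^*)\in\partial\hat f(a,b,x)$, then testing the definition of the Fr\'echet subdifferential along directions $(a',b',u)\to(a,b,x)$ with $a'\in A$, $b'\in B$, and estimating the pairing through the dual $\rho$-norm \eqref{normd}, gives
\[
\limsup_{\substack{a'\to a,\,b'\to b,\,u\to x\\a'\in A,\,b'\in B,\,(a',b',u)\ne(a,b,x)}}\frac{\left(f(a,b,x)-f(a',b',u)\right)_+}{\norm{(a',b',u)-(a,b,x)}_\rho}\le\norm{x^*}+\rho^{-1}(\norm{x_1^*}+\norm{x_2^*}).
\]
Because of the weight $\rho^{-1}$, this does not directly compare $\norm{x^*}$ with the slope at the same scale; the device is to use two scales. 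Evaluating the estimate at scale $\sqrt\rho$ for a subgradient that is admissible at scale $\rho$ (so $\norm{x_1^*}+\norm{x_2^*}<\rho$) turns the weighted term into $\sqrt\rho\to0$, which yields the inequality ${\rm str}_1[A,B](\bar x)\le$ (right-hand side of \eqref{str1'}) after passing to the limit.

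For the reverse inequality in (i) I would split according to the two hypotheses. In the convex case the slope of the convex $\hat f$ equals $\dist_\rho(0,\partial\hat f(a,b,x))$ exactly; since the infimal slope is at most $1$, the minimal-$\rho$-norm subgradient automatically satisfies $\norm{x_1^*}+\norm{x_2^*}\le\rho$ and has $\norm{x^*}$ no larger than the slope, which is precisely what \eqref{str1'} demands. In the Asplund case the substantive step is to produce, near any $(a,b,x)$ of small slope, a genuine Fr\'echet subgradient of $\hat f$ of comparably small $\rho$-norm. I would obtain this from the Ekeland variational principle (Lemma~\ref{Ek}) applied to $\hat f$ perturbed by $(\eps/\lambda)\norm{\cdot-\hat z}_\rho$, followed by the fuzzy sum rule (Lemma~\ref{SR}(i)) applied to that Lipschitz norm and the lower semicontinuous $\hat f$, which places a small element in $\partial\hat f$ at a nearby feasible point. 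The passage from the maximum-norm to the restricted $\norm{x-a}=\norm{x-b}$ version (the second equality in \eqref{str1'}) is then exactly the argument of Proposition~\ref{P6}: when $\norm{x-a}\ne\norm{x-b}$ the slope is at least $1$, so such points are irrelevant to the infimum.

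With (i) in hand, parts (ii) and (iii) amount to splitting $\partial\hat f(a,b,x)=\partial(f+i_{A\times B})(a,b,x)$ and parametrising $\partial f$ by Lemma~\ref{l02}. For (ii) the fuzzy sum rule (again Lemma~\ref{SR}(i), legitimate since $f$ is Lipschitz and $i_{A\times B}$ is lower semicontinuous) expresses each $(x_1^*,x_2^*,x^*)\in\partial\hat f(a,b,x)$ as a subgradient of $f$ at a nearby point $(x_1',x_2',x')$ plus normals $n_1^*\in N_{A}(a')$, $n_2^*\in N_{B}(b')$ at nearby $a'\in A$, $b'\in B$, up to an arbitrarily small error. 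Lemma~\ref{l02} identifies the $f$-part with vectors obeying the normalisation $\norm{x_1^*}+\norm{x_2^*}=1$ and the alignment conditions \eqref{T1-2}, and gives $x^*=-(x_1^*+x_2^*)$ so that $\norm{x^*}=\norm{x_1^*+x_2^*}$; after the sign change built into \eqref{itrw} the smallness constraint $\norm{x_1^*}+\norm{x_2^*}<\rho$ from \eqref{str1'} becomes the conditions $d(x_1^*,N_{A}(a'))<\rho$ and $d(x_2^*,N_{B}(b'))<\rho$. Matching these data to \eqref{itrw} yields $\itrd{w}\le{\rm str}_1[A,B](\bar x)$. For (iii), convexity lets me replace the fuzzy sum rule by the exact convex sum rule (Lemma~\ref{SR}(ii), applicable because $f$ is continuous on $\dom i_{A\times B}$), so that $\partial\hat f(a,b,x)=\partial f(a,b,x)+N_{A}(a)\times N_{B}(b)\times\{0\}$ with the normals taken at $a,b$ themselves; the same parametrisation now gives an exact correspondence with \eqref{itrc}, hence the equality ${\rm str}_1[A,B](\bar x)=\itrd{c}$.

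The main obstacle I anticipate is the reverse inequality of (i) in the Asplund setting, i.e.\ manufacturing a small-norm Fr\'echet subgradient of $\hat f$ out of the primal slope. The difficulty is threefold: running Ekeland in the parameter-dependent norm \eqref{norm} while keeping the new base point feasible ($a'\in A$, $b'\in B$) and inside the shrinking balls; converting the dual $\rho$-norm \eqref{normd}, with its $\rho^{-1}$ weight, into the decoupled requirements ``$\norm{x^*}$ small'' and ``$\norm{x_1^*}+\norm{x_2^*}<\rho$'' without losing a constant, which is where the two-scale $\rho$ versus $\sqrt\rho$ trick is essential; and tracking the normalisation and alignment bookkeeping from Lemma~\ref{l02} through the fuzzy sum-rule error so that the limit survives intact. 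Everything else reduces to a routine combination of the conventions (infimum over the empty set equal to $1$) with the already-established primal representations of Propositions~\ref{P5} and \ref{P6}.
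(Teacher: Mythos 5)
Most of your outline tracks the paper's own proof: the forward estimate with the two-scale device (your $\rho$ versus $\sqrt\rho$ is the paper's $\rho$ versus $\rho^2$), the penalization-plus-sum-rule argument for the reverse inequality (your convex ``slope equals distance to the subdifferential'' is exactly the paper's exact-sum-rule step; and in the Asplund case the Ekeland step you insert is redundant, since a point of slope less than $\beta$ is already a local minimizer of $\hat f+\beta\norm{\cdot-(\hat a,\hat b,\hat x)}_{\rho'}$, so the fuzzy sum rule applies directly --- Ekeland is needed in Proposition~\ref{P4}, not here), and the decomposition $\hat f=f+i_{A\times B}$ combined with Lemma~\ref{l02} for parts (ii) and (iii).

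The genuine gap is your treatment of the second equality in \eqref{str1'} and, with it, of the equal-norm constraints in \eqref{itrw}. You claim that points with $\norm{x-a}\ne\norm{x-b}$ are irrelevant to the infimum because, by the argument of Proposition~\ref{P6}, the slope there is at least $1$. That argument disposes of such points in the primal quantities \eqref{str1} and \eqref{str1"}, but the infimum in \eqref{str1'} runs over \emph{subgradients}, and a lower bound on the slope gives no lower bound on $\norm{x^*}$: for a qualifying subgradient (one with $\norm{x_1^*}+\norm{x_2^*}<\rho$) your own forward estimate only yields $\text{slope}<\norm{x^*}+1$, so slope $\ge1$ forces merely $\norm{x^*}>0$. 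The same issue resurfaces in part (ii): to match the fuzzy-sum-rule output to \eqref{itrw} you must produce the equalities $\norm{x'-x_1'}=\norm{x'-x_2'}$ and $\norm{x-a}=\norm{x-b}$, which you defer to ``bookkeeping''. In the paper this is the crux, not bookkeeping: from $\norm{x_1^*+x_2^*}=\norm{x^*}\le\norm{w^*}+\eps<\al<1$ together with the normalization $\norm{x_1^*}+\norm{x_2^*}=1$ one gets $\norm{x_1^*},\norm{x_2^*}>(1-\al)/2>0$, and then the dichotomy in Lemma~\ref{l02} (a component must vanish when the two distances differ) forces the equal-distance conditions at the perturbed point and, retroactively, $\norm{x-a}=\norm{x-b}$ at the original one. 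This is precisely why the paper can prove the second (restricted) representation in \eqref{str1'} in the Asplund case only \emph{inside} the proof of (ii), whereas the convex case gets it for free because the exact sum rule does not move the base point. (Alternatively, one can argue directly that at a point with, say, $\norm{x-a}<\norm{x-b}$ every subgradient of $\hat f$ has third component of norm exactly $1$, because $\hat f(a,b,\cdot)$ coincides near $x$ with $\norm{b-\cdot}$; but some such argument must be supplied --- the slope bound alone does not do it.)
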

%\todo[inline]{I have combined two separate statements into a single one, and I do not quite like it.
%I had to do it because a portion of the proof of assertion (i) is contained in the proof of assertion (ii).
%I do not know how to prove it directly.
%Parts (ii) and (iii) of Proposition~\ref{P8} should make a separate proposition.}

\begin{proof}
(i) Let $R_1$ and $R_2$ denote the first and second expressions in \eqref{str1'}, respectively.
We first show that
${\rm str}_1[A,B](\bar x)\le R_1$.
Let $\rho>0$, $a\in A$, $b\in B$, $x\in X$, $(x^*_1,x^*_2,x^*)\in\partial\hat f(a,b,x)$ and $\norm{x^*_1}+\norm{x^*_2}<\rho^2$.
Then, using the definition \eqref{sd} of the Fr\'echet subdifferential and representation \eqref{normd} of the dual norm, we obtain
\begin{align*}
\limsup_{\substack{a'\to a,\,b'\to b,\, u\to x\\ a'\in A,\,b'\in B\\(a',b',u)\neq (a,b,x)}} &\frac{f(a,b,x)- f(a',b',u)} {\norm{(a',b',u)-(a,b,x)}_{\rho}} \le\norm{(x^*_1,x^*_2,x^*)}_\rho
\\
&-\liminf_{\substack{a'\to a,\,b'\to b,\, u\to x\\ a'\in A,\,b'\in B\\(a',b',u)\neq (a,b,x)}} \frac{f(a',b',u)- f(a,b,x)-\ang{(x^*_1,x^*_2,x^*),(a',b',u)-(a,b,x)}} {\norm{(a',b',u)-(a,b,x)}_{\rho}}
\\
&\le\norm{(x^*_1,x^*_2,x^*)}_\rho =\|x^*\|+\rho^{-1}(\|x^*_1\|+\|x^*_2\|)\le\norm{x^*}+\rho.
\end{align*}
If $\rho<1$, then $\rho^2<\rho$ and it follows from the above estimate that
\begin{multline*}
\inf\limits_{\substack{a\in(A\setminus B)\cap\B_\rho(\bx),\;b\in(B\setminus A)\cap\B_\rho(\bx)\\x\in\B_\rho(\bx)}}
\limsup_{\substack{u\to x,\,a'\to a,\,b'\to b\\ a'\in A,\,b'\in B\\(a',b',u)\neq (a,b,x)}} \frac{f(a,b,x)- f(a',b',u)} {\norm{(a',b',u)-(a,b,x)}_{\rho}} \\
\le\inf\limits_{\substack{a\in(A\setminus B)\cap\B_{\rho^2}(\bx),\;b\in(B\setminus A)\cap\B_{\rho^2}(\bx)\\x\in\B_{\rho^2}(\bx)\\\\ (x^*_1,x^*_2,x^*)\in\partial\hat f(a,b,x),\; \norm{x^*_1}+\norm{x^*_2}<\rho^2}}
\norm{x^*}+\rho.
\end{multline*}
Passing to the limits as $\rho\downarrow0$ and using definition \eqref{str1}, we arrive at the inequality
${\rm str}_1[A,B](\bar x)\le R_1$.

Next we prove the opposite inequality.
%If ${\rm str}_1[A,B](\bar x)=\infty$, the inequality holds trivially.
Let ${\rm str}_1[A,B](\bar x)<\beta<\alpha<\infty$, ${\rho>0}$ and
${\rho':=\min\{1,\alpha^{-1}\}\rho}$.
By \eqref{str1"}, one can find points $\hat a\in(A\setminus B)\cap\B_{\rho'}(\bx)$, $\hat b\in(B\setminus A)\cap\B_{\rho'}(\bx)$ and $\hat x\in\B_{\rho'}(\bx)$, such that $\|\hat x-\hat a\|=\|\hat x-\hat b\|$ and
$$
f(\hat a,\hat b,\hat x)- f(a',b',u) \le\be\|(a',b',u)-(\hat a,\hat b,\hat x)\|_{\rho'} \qdtx{for all} (a',b',u)\in A\times B\times X\;\;\mbox{near}\;\;(\hat a,\hat b,\hat x).
$$
In other words, $(\hat a,\hat b,\hat x)$ is a local minimizer of the function
$$
(a',b',u)\mapsto\hat f(a',b',u)+\beta\|(a',b',u)-(\hat a,\hat b,\hat x)\|_{\rho'},
$$
and consequently, its Fr\'echet subdifferential at $(\hat a,\hat b,\hat x)$ contains zero.
We consider two cases.

1) $X$ is an Asplund space.
Take an $\eps>0$ such that
$$\eps<\min\{d(\hat a,B),d(\hat b,A)\},\quad \norm{\hat x-\bar x}+\eps<\rho',\quad \norm{\hat a-\bx}+\eps<\rho', \quad \|\hat b-\bx\|+\eps<\rho', \quad
\be+\eps<\al.$$
Applying the fuzzy sum rule for Fr\'echet subdifferentials (Lemma~\ref{SR}(i))
%\todo{To be formulated in the Introduction}
and the representation \eqref{normd} of the dual norm,
we can find points $a\in A\cap\B_\eps(\hat a)$, $b\in B\cap\B_\eps(\hat b)$, $x\in\B_\eps(\hat x)$ and $(x^*_1,x^*_2,x^*)\in \partial\hat f(a,b,x)$ such that \begin{gather*}
\norm{(x^*_1,x^*_2,x^*)}_{\rho'} =\|x^*\|+(\|x^*_1\|+\|x^*_2\|)/\rho' <\beta+\varepsilon.
\end{gather*}
It follows that $a\in(A\setminus B)\cap\B_\rho(\bx)$, $b\in(B\setminus A)\cap\B_\rho(\bx)$ and $x\in\B_\rho(\bx)$.

2) $A$ and $B$ are convex.
Then function $\hat f$ is convex.
Applying the convex sum rule (Lemma~\ref{SR}(ii)), we can find a subgradient $(x^*_1,x^*_2,x^*)\in \partial\hat f(\hat a,\hat b,\hat x)$ such that
\begin{gather*}
\norm{(x^*_1,x^*_2,x^*)}_{\rho'} =\|x^*\|+(\|x^*_1\|+\|x^*_2\|)/\rho'\le\beta.
\end{gather*}

Thus, in both cases we have
\begin{gather*}
\|x^*\|+(\|x^*_1\|+\|x^*_2\|)/\rho'<\al,
\end{gather*}
and consequently,
$$\norm{x^*}<\alpha,\quad \norm{x^*_1}<\rho\qdtx{and} \norm{x^*_2}<\rho.$$
It follows that
$R_1\le\alpha$.
By letting $\alpha\to{\rm str}_1[A,B](\bar x)$, we obtain the claimed inequality.

Observe that, unlike the first case, in the second one we did not produce a new triple $(a,b,x)$ to replace $(\hat a,\hat b,\hat x)$, so the equality $\|\hat x-\hat a\|=\|\hat x-\hat b\|$ is preserved.
Hence, in the convex case both representations in \eqref{str1'} have been proved.

Now we proceed to the proof of the `moreover' part of the proposition.
The remaining equality $R_1=R_2$ in the Asplund space case will be established in the process.

(ii) Suppose $X$ is Asplund.
Let ${\rm str}_1[A,B](\bar x)<\al<1$
and $\rho>0$.
By the first representation in \eqref{str1'} proved above, there are
$a\in(A\setminus B)\cap\B_\rho(\bx)$, $b\in(B\setminus A)\cap\B_\rho(\bx)$, $x\in\B_\rho(\bx)$ and $(w^*_1,w^*_2,w^*) \in\partial\hat f(a,b,x)$, where $\hat f$ is given by \eqref{hatf}, such that
\begin{gather}\label{P8P-45}
\|w^*_1\|+\|w^*_2\|<\rho\qdtx{and} \|w^*\|<\al.
\end{gather}
%Observe that
%\begin{gather*}
%\de_0\le\norm{x-a}+\norm{x-b} \le2\max\{\norm{x-a},\norm{x-b}\},
%\end{gather*}
%and consequently,
%\begin{gather}\label{P8P-51}
%\red{\max\{\norm{x-a},\norm{x-b}\}\ge\frac{\de_0}{2}.}
%\end{gather}
Denote $\de_0:=\max\{\norm{x-a},\norm{x-b}\}>0$, $\de_1:=\big|\norm{x-a}-\norm{x-b}\big|$ and choose an $\eps>0$ such that
\begin{gather}\label{P8P-21}
\eps<\min\left\{d(a,B),d(b,A),\frac{\de_0}{2}\right\},
\\\label{P8P-22}
\mbox{if}\; \de_1>0
\quad\mbox{then}\quad
\varepsilon<\frac{\de_1}{4},
\\\label{P8P-23}
\|x-\bar x\|+\varepsilon<\rho,\quad \de_0+2\varepsilon<\rho,
\\\label{P8P-8}
\|w^*_1\|+\varepsilon<\rho,\quad
\|w^*_2\|+\varepsilon<\rho,\quad
\|w^*\|+\varepsilon<\al.
\end{gather}

Observe that function $\hat f$ is the sum of two functions: the Lipschitz continuous function $f$ defined by \eqref{f} and the lower semicontinuous indicator function $i_{A\times B}$ (considered as a function on $ X^{3}$).
We can apply the fuzzy sum rule for Fr\'echet subdifferentials (Lemma~\ref{SR}(i)):
%\todo{Reference}
there exist points $x_1',x_2',x'\in X$, $a'\in A$, $b'\in B$, $x_1^*,x_2^*,x^*,u_1^*,u_2^*\in X^*$ such that
\begin{gather}\label{P8P-11}
\|x'-x\|<\eps,\quad \|x_1'-a\|<\eps,\quad \|x_2'-b\|<\eps, \quad\|a'-a\|<\eps,\quad \|b'-b\|<\eps,
\\\label{P8P-12}
\left(-x_1^*,-x_2^*,x^*\right)\in\partial f(x_1',x_2',x'),\quad u_1^*\in N_{A}(a'),\quad u_2^*\in N_{B}(b'),
\\\notag
\|(w^*_1,w^*_2,w^*)-(-x_1^*,-x_2^*,x^*) -(u_1^*,u_2^*,0)\|<\varepsilon.
\end{gather}
The last inequality is equivalent to the following three:
\begin{gather}\label{P8P-6}
\|w^*-x^*\|<\varepsilon,\quad
\|w^*_1+x_1^*-u_1^*\|<\varepsilon,\quad
\|w^*_2+x_2^*-u_2^*\|<\varepsilon.
\end{gather}
%Set
%\begin{gather*}%\label{P8P-6}
%x_1'':=x_1'+x-x',\quad x_2'':=x_2'+x-x'.
%\end{gather*}
Thanks to \eqref{P8P-21}, \eqref{P8P-11} and \eqref{P8P-23}, we have $a'\notin B$, $b'\notin A$ and the following estimates:
\begin{align}\notag
\max\{\norm{x_1'-x'},\norm{x_2'-x'}\}\ge \max\{&\norm{x-a}-\|x_1'-a\|-\|x'-x\|,
\\\label{P8P-7}
&\norm{x-b}-\|x_2'-b\|-\|x'-x\|\}
>\de_0-2\eps>0.
\end{align}
If $\de_1>0$ then, in view of \eqref{P8P-22} and \eqref{P8P-11},
\begin{gather}\label{P8P-9}
\big|\norm{x_1'-x'}-\norm{x_2'-x'}\big|\ge \big|\norm{a-x}-\norm{b-x}\big|- \norm{x_1'-a}-\norm{x_2'-b}-2\norm{x'-x}>\de_1-4\eps>0.
\end{gather}

Thanks to \eqref{P8P-7} and Lemma~\ref{l02},
we have
\begin{gather}\label{P8P-1}
x^*=x_1^*+x_2^*,
\\\label{P8P-2}
\|x_1^*\|+\|x_2^*\|=1,
\\\label{P8P-3}
\langle x_1^*,x'-x_1'\rangle=\|x_1^*\|\,\|x'-x_1'\|,
\quad
\langle x_2^*,x'-x_2'\rangle=\|x_2^*\|\,\|x'-x_2'\|,
\\\label{P8P-4}
\mbox{if}\;
\norm{x_1'-x'}<\norm{x_2'-x'},
\quad\mbox{then}\quad
x_1^*=0,
\\\label{P8P-5}
\mbox{if}\;
\norm{x_2'-x'}<\norm{x_1'-x'},
\quad\mbox{then}\quad
x_2^*=0.
\end{gather}
It follows from \eqref{P8P-1}, the first inequality in \eqref{P8P-6} and the second inequality in \eqref{P8P-8} that
\begin{gather}\label{P8P-25}
\|x_1^*+x_2^*\|=\|x^*\|\le\|w^*\|+\varepsilon<\al.
\end{gather}
Then $\|x_2^*\|-\|x_1^*\|<\al$, $\|x_1^*\|-\|x_2^*\|<\al$ and, in view of \eqref{P8P-2},
\begin{gather}\label{P8P-27}
\|x_1^*\|>\frac{1-\al}{2}>0
\qdtx{and}
\|x_2^*\|>\frac{1-\al}{2}>0.
\end{gather}
Hence, by \eqref{P8P-4}, \eqref{P8P-5}, and \eqref{P8P-9}, we have $\norm{x_1'-x'}=\norm{x_2'-x'}$ and $\de_1=0$, i.e., $\de_0=\norm{x-a}=\norm{x-b}$.
This proves the second equality in \eqref{str1'}.
Inequalities \eqref{P8P-6} and \eqref{P8P-8} yield the following estimates:
\begin{gather*}
d(x_1^*,N_{A}(a')) \le\|x_1^*-u_1^*\| <\|w^*_1\|+\varepsilon<\rho,
\\
d(x_2^*,N_{B}(b')) \le\|x_2^*-u_2^*\| <\|w^*_2\|+\varepsilon<\rho.
\end{gather*}
In view of
\eqref{P8P-2}, \eqref{P8P-3} and \eqref{P8P-25}, after taking limits as $\eps\downarrow0$,
we conclude that
\begin{align*}
\liminf_{\substack{x'\to x,\;x_1'\to a,\;x_2'\to b,\; a'\to a,\;b'\to b\\a'\in A,\;b'\in B,\; \|x'-x_1'\|=\|x'-x_2'\|\\ d(x_1^*,N_{A}(a'))<\rho,\; d(x_2^*,N_{B}(b'))<\rho,\; \|x_1^*\|+\|x_2^*\|=1
\\
\langle x_1^*,x'-x_1'\rangle=\|x_1^*\|\,\|x'-x_1'\|,\;
\langle x_2^*,x'-x_2'\rangle=\|x_2^*\|\,\|x'-x_2'\|}}
\|x^*_1+x^*_2\|\le\alpha.
\end{align*}
By letting $\rho\downarrow0$ and $\alpha\downarrow{\rm str}_1[A,B](\bar x)$, we obtain the claimed inequality.

(iii) Let $A$ and $B$ be convex.
We first prove the inequality $\itrd{c}\le {\rm str}_1[A,B](\bar x)$ by modifying slightly (simplifying!) the above proof of (i) replacing the fuzzy sum rule for Fr\'echet subdifferentials by the exact convex sum rule.
%\todo{Reference}

Let ${\rm str}_1[A,B](\bar x)<\al<1$
and $\rho>0$.
By the second representation in
\eqref{str1'} proved above, there are
$a\in(A\setminus B)\cap\B_\rho(\bx)$, $b\in(B\setminus A)\cap\B_\rho(\bx)$, $x\in\B_\rho(\bx)$ with $\norm{x-a}=\norm{x-b}$, and $(w^*_1,w^*_2,w^*)\in\partial\hat f(a,b,x)$, where $\hat f$ is given by \eqref{hatf}, satisfying
conditions \eqref{P8P-45}.
Observe that function $\hat f$ is the sum of two convex functions: the Lipschitz continuous function $f$ defined by \eqref{f} and the indicator function $i_{A\times B}$ (considered as a function on $ X^{3}$).
We can apply the convex sum rule (Lemma~\ref{SR}(ii)): there exist a subgradient $(-x_1^*,-x_2^*,x^*)\in\partial f(a,b,x)$ and normals $u_1^*\in N_{A}(a)$ and $u_2^*\in N_{B}(b)$ such that
\begin{gather}\label{P8P-6'}
w^*=x^*,\quad
w^*_1=u_1^*-x_1^*,\quad
w^*_2=u_2^*-x_2^*.
\end{gather}
Thanks to Lemma~\ref{l02}, conditions \eqref{P8P-1} and \eqref{P8P-2} hold true as well as the following two:
\begin{gather}\label{P8P-3'}
\langle x_1^*,x-a\rangle=\|x_1^*\|\,\|x-a\|,
\quad
\langle x_2^*,x-b\rangle=\|x_2^*\|\,\|x-b\|.
\end{gather}
It follows from \eqref{P8P-1}, \eqref{P8P-45}
and the first equality in \eqref{P8P-6'} that
\begin{gather}\label{P8P-25'}
\|x_1^*+x_2^*\|=\|x^*\|=\|w^*\|<\al.
\end{gather}
Then $\|x_2^*\|-\|x_1^*\|<\al$, $\|x_1^*\|-\|x_2^*\|<\al$ and, in view of \eqref{P8P-2}, inequalities \eqref{P8P-27} hold true.
%Thus, by \eqref{P8P-4'} and \eqref{P8P-5'}, we have $\norm{x-a}=\norm{x-b}>0$.
%and $\de_0=\norm{x-a}=\norm{x-b}$,
Conditions \eqref{P8P-45}
and \eqref{P8P-6'} yield the following estimates:
\begin{gather*}
d(x_1^*,N_{A}(a)) \le\|x_1^*-u_1^*\| =\|w^*_1\|<\rho,
\\
d(x_2^*,N_{B}(b)) \le\|x_2^*-u_2^*\| =\|w^*_2\|<\rho.
\end{gather*}
Hence, $\itrd{c}\le\alpha$.
By letting $\alpha\downarrow{\rm str}_1[A,B](\bar x)$, we obtain the claimed inequality.

Let $\itrd{c}<\al<1$
and $\rho\in]0,1[$.
By definition \eqref{itrc}, there are
$a\in(A\setminus B)\cap\B_\rho(\bx)$, $b\in(B\setminus A)\cap\B_\rho(\bx)$, $x\in\B_\rho(\bx)$ with $\norm{x-a}=\norm{x-b}$, $x^*_1,x^*_2\in X^*$, and normals $u^*_1\in N_{A}(a)$, $u^*_2\in N_{B}(b)$ satisfying \eqref{P8P-3'} and
\begin{gather}\label{P8P-51}
\|x^*_1\|+\|x^*_2\|=1,\quad \|x^*_1+x^*_2\|<\al,\quad \|x^*_1-u^*_1\|<\frac{\rho}{2},\quad \|x^*_2-u^*_2\|<\frac{\rho}{2}.
\end{gather}
Thus, $(u^*_1,u^*_2)\in\sd i_{A\times B}(a,b)$ and $(-x_1^*,-x_2^*,x^*)\in \sd f(a,b,x)$, where  $x^*:=x^*_1+x^*_2$.
By the convex sum rule (Lemma~\ref{SR}(ii)), $(w_1^*,w_2^*,x^*)\in \sd\hat f(a,b,x)$, where $w^*_1=u_1^*-x_1^*$,
$w^*_2=u_2^*-x_2^*$.
Then $\|w_1^*\|+\|w_2^*\|<\rho$
and, in view of the second representation in \eqref{str1'}, ${\rm str}_1[A,B](\bar x)\le\al$.
By letting $\alpha\downarrow\itrd{c}$, we obtain the inequality ${\rm str}_1[A,B](\bar x)\le \itrd{c}$.
\qed\end{proof}

\begin{remark}
The inequality `$\le$' in both representations in \eqref{str1'} as well as the opposite inequalities in the convex case are valid in arbitrary (not necessarily complete) normed linear spaces.
\xqed\end{remark}

\begin{proof} \emph{of Theorems~\ref{T1} and \ref{T2}}
The theorems follow now from Propositions~\ref{P7} and \ref{P8} and definitions \eqref{itrw} and \eqref{itrc}.
\qed\end{proof}

\begin{proposition}\label{P9}
Suppose $X$ is a Banach space, $A,B\subset X$ are closed and convex, and $\bx\in A\cap B$.
Then
$\str={\rm str}_1[A,B](\bar x)=\itrd{c}.$
\end{proposition}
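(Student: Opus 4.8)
The plan is to read off both equalities as an immediate consequence of the work already carried out in Propositions~\ref{P7} and \ref{P8}, specialising each to the convex case. Since $A$ and $B$ are assumed convex, the first step I would take is to invoke Proposition~\ref{P7}(ii), which upgrades the inequality ${\rm str}_1[A,B](\bar x)\le\str$ of part~(i) to the equality ${\rm str}_1[A,B](\bar x)=\str$. The second step is to invoke Proposition~\ref{P8}(iii), which, again under convexity, asserts ${\rm str}_1[A,B](\bar x)=\itrd{c}$. Concatenating the two yields $\str={\rm str}_1[A,B](\bar x)=\itrd{c}$, which is exactly the claim.

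I would emphasise that the substantive content resides entirely in those two propositions, so at this stage there is nothing left to compute. Proposition~\ref{P7}(ii) rests on the convexity of the function $f$ in \eqref{f} together with the primal representation \eqref{str+}, while Proposition~\ref{P8}(iii) replaces the fuzzy sum rule by the \emph{exact} convex sum rule (Lemma~\ref{SR}(ii)) and exploits the subdifferential formula of Lemma~\ref{l02}. The completeness of $X$ that is in force here guarantees that the constant $\str$ of Definition~\ref{D1} admits the primal representations of Propositions~\ref{P4} and \ref{P5} on which both cited results are built; hence no hypothesis beyond those already assumed is needed, and the chain of equalities is legitimate.

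Accordingly, there is no genuine obstacle to overcome: the proposition is a corollary obtained simply by chaining the convex equality cases of Propositions~\ref{P7} and \ref{P8}. If one preferred a self-contained argument bypassing the intermediate constant ${\rm str}_1[A,B](\bar x)$, the only alternative would be to re-derive $\str=\itrd{c}$ directly, combining the primal representation \eqref{str+} with the convex sum rule and Lemma~\ref{l02}; but this would merely transcribe the proofs of Propositions~\ref{P7}(ii) and \ref{P8}(iii), so routing through ${\rm str}_1[A,B](\bar x)$ is the economical path and the one I would follow.
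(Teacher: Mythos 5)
Your proposal is correct and coincides exactly with the paper's own proof, which derives Proposition~\ref{P9} as an immediate consequence of Proposition~\ref{P7}(ii) and Proposition~\ref{P8}(iii). The additional remarks you make about what those two propositions rest on (the convexity of $f$, the exact convex sum rule, and Lemma~\ref{l02}) are accurate but not needed for the argument itself.
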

\begin{proof}
The assertion is a consequence of Proposition~\ref{P7}(ii) and Proposition~\ref{P8}(iii).
\qed\end{proof}

\begin{remark}
Using the representations in Propositions~\ref{P4}, \ref{P5}, \ref{P6} and \ref{P8}, one can formulate several intermediate sufficient (and in some cases also necessary) conditions of subtransversality.
\xqed\end{remark}

\section{Intrinsic transversality}\label{s:intrinsic trans}

The two-limit definition \eqref{itrw} as well as the corresponding dual space characterization of subtransversality in Theorem~\ref{T1} look complicated and difficult to verify.
The following one-limit modification of \eqref{itrw} in terms of Fr\'echet normals can be useful:
\begin{gather}\label{itr}
\itr:=\liminf\limits_{\substack{
a\to\bar x,\; b\to\bar x,\; x\to\bar x\\
a\in A\setminus B,\;b\in B\setminus A,\;x\ne a,\;x\ne b\\ x^*_1\in N_{A}(a)\setminus\{0\},\;x^*_2\in N_{B}(b)\setminus\{0\},\;\norm{x^*_1}+\norm{x^*_2}=1
\\
\frac{\norm{x-a}}{\norm{x-b}}\to1,\; \frac{\ang{x^*_1,x-a}}{\norm{x^*_1}\norm{x-a}}\to1,\; \frac{\ang{x^*_2,x-b}}{\norm{x^*_2}\norm{x-b}}\to1}}
\|x^*_1+x^*_2\|,
\end{gather}
with the convention that the infimum over the empty set equals 1.
The relationship between the constants \eqref{itrw}, \eqref{itrc} and \eqref{itr} is given by the next proposition.
%\todo[inline]{It might make sense to replace the $\liminf$ in \eqref{itr} by the one in the \LHS\ of \eqref{P10P-1} (with $\rho'\downarrow0$).}

\begin{proposition}\label{P10}
Suppose $X$ is a Banach space, $A,B\subset X$ are closed, and $\bx\in A\cap B$.
\begin{enumerate}
\item
$0\le\itr\le\itrd{w}\le\itrd{c}\le1$;
\item
if $\dim X<\infty$, then
\begin{gather}\label{itr"}
\itrd{w}= \liminf_{\substack{a\to\bx,\;b\to\bx,\; x\to\bx\\a\in A\setminus B,\;b\in B\setminus A,\; \norm{x-a}=\norm{x-b}\\ d(x_1^*,\overline{N}_{A}(a))\to0,\; d(x_2^*,\overline{N}_{B}(b))\to0,\; \|x_1^*\|+\|x_2^*\|=1
\\
\langle x_1^*,x-a\rangle=\|x_1^*\|\,\|x-a\|,\;
\langle x_2^*,x-b\rangle=\|x_2^*\|\,\|x-b\|}}
\|x^*_1+x^*_2\|,
\end{gather}
with the convention that the infimum over the empty set equals 1;
\item
if $\dim X<\infty$, and $A$ and $B$ are convex, then $\itrd{w}=\itrd{c}=\str$.
\end{enumerate}
\end{proposition}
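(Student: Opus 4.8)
The plan is to prove the chain in (i) link by link, then obtain (ii) by a compactness argument available only in finite dimensions, and finally deduce (iii) from (ii) together with Proposition~\ref{P9}.

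For (i) the outer bounds are immediate: $0\le\itr$ because $\|x^*_1+x^*_2\|\ge0$ and the empty-set convention gives $1$, while $\itrd{c}\le1$ because every admissible pair satisfies $\|x^*_1+x^*_2\|\le\|x^*_1\|+\|x^*_2\|=1$. To get $\itrd{w}\le\itrd{c}$ I would take data nearly realizing the $\liminf$ in \eqref{itrc}: points $a_n\in A\setminus B$, $b_n\in B\setminus A$, $x_n$ with $\norm{x_n-a_n}=\norm{x_n-b_n}$, all converging to $\bx$, and $x^*_{1,n},x^*_{2,n}$ satisfying the alignment equalities, the normalization $\|x^*_{1,n}\|+\|x^*_{2,n}\|=1$, $d(x^*_{1,n},N_{A}(a_n))\to0$, $d(x^*_{2,n},N_{B}(b_n))\to0$, and $\|x^*_{1,n}+x^*_{2,n}\|\to\itrd{c}$. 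For fixed $\rho>0$ and $n$ large, $(a_n,b_n,x_n)$ is an admissible centre in \eqref{itrw}, and the \emph{constant} configuration $a'=a_n,\,b'=b_n,\,x'=x_n,\,x'_1=a_n,\,x'_2=b_n,\,x^*_i=x^*_{i,n}$ satisfies every requirement of the inner $\liminf$ there — the exact alignment and the balance $\norm{x'-x'_1}=\norm{x'-x'_2}$ because these hold at the centre, and $d(x^*_{i,n},N_{A}(a_n))<\rho$ (resp.\ for $B$) because the distances tend to $0$. Hence the inner $\liminf$ is at most $\|x^*_{1,n}+x^*_{2,n}\|$, the centre-infimum at level $\rho$ is at most $\itrd{c}$, and letting $\rho\downarrow0$ gives $\itrd{w}\le\itrd{c}$.

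The link $\itr\le\itrd{w}$ is the main obstacle. Assuming $\itrd{w}<1$ (otherwise $\itr\le1=\itrd{w}$), I would start from configurations nearly realizing \eqref{itrw}: a centre with $\delta_0:=\norm{x-a}=\norm{x-b}>0$ (positivity holds since $a\in A\setminus B$ and $b\in B$ force $a\ne b$) and inner data $(a',b',x',x'_1,x'_2,x^*_1,x^*_2)$ with exact primal alignment, $d(x^*_1,N_{A}(a'))<\rho$, $d(x^*_2,N_{B}(b'))<\rho$, and $\|x^*_1+x^*_2\|$ near $\itrd{w}$. I would then replace the approximate normals by genuine ones: pick $u^*_1\in N_{A}(a')$, $u^*_2\in N_{B}(b')$ within $\rho$ of $x^*_1,x^*_2$ and rescale them to restore $\|u^*_1\|+\|u^*_2\|=1$. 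Here $u^*_i\ne0$ because $\|x^*_1+x^*_2\|<\alpha<1$ forces $\|x^*_i\|>\tfrac{1-\alpha}{2}$ (exactly as in the proof of Proposition~\ref{P8}), and $a'\in A\setminus B$, $b'\in B\setminus A$ once the perturbations are small relative to $d(a,B),d(b,A)>0$. Taking $a',b',x'$ as the $\itr$-points, the crux is that the distance ratio $\norm{x'-a'}/\norm{x'-b'}$ and the two alignment cosines must tend to $1$. This dictates a \emph{diagonal} choice in which the spatial perturbation scale is taken small relative to $\delta_0$ (legitimate because $\delta_0>0$ at each fixed centre) while $\rho\downarrow0$; combining the exact identity $\ang{x^*_1,x'-x'_1}=\|x^*_1\|\norm{x'-x'_1}$ with $\|u^*_i-x^*_i\|<\rho$ and the smallness of $\norm{x'_1-a'},\norm{x'_2-b'}$ relative to $\delta_0$ then yields cosines and ratio of the form $1+o(1)$. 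Since rescaling does not affect the limit, $\|u^*_1+u^*_2\|\to\itrd{w}$, producing $\itr$-admissible data of value tending to $\itrd{w}$, i.e.\ $\itr\le\itrd{w}$.

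For (ii), write $I_w$ for the right-hand side of \eqref{itr"}. For $\itrd{w}\le I_w$ I would use $I_w$-data as centres and exploit that \eqref{itrw} \emph{decouples} the normal-evaluation points $a',b'$ from the alignment points $x'_1,x'_2$: keeping $x'_1=a,\,x'_2=b,\,x'=x$ preserves the exact alignment and balance at the centre, while $\overline{N}_{A}(a)=\Limsup_{a''\to a,\,a''\in A}N_{A}(a'')$ (finite dimensions) lets me pick $a'\to a$ in $A$ carrying Fréchet normals that approximate $x^*_1$ to within $\rho$, and similarly $b'\to b$; this bounds each inner $\liminf$ by $\|x^*_1+x^*_2\|$. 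For $I_w\le\itrd{w}$ I would run the inner sequence of near-optimal $\itrd{w}$-configurations and pass to a limit: boundedness of the $x^*_i$ and finite-dimensional compactness give $x^*_i\to\tilde x^*_i$, while the accompanying normals $u^*_1\in N_{A}(a')$, $u^*_2\in N_{B}(b')$ (with $a'\to a$, $b'\to b$) converge into $\overline{N}_{A}(a)$, resp.\ $\overline{N}_{B}(b)$, so $d(\tilde x^*_1,\overline{N}_{A}(a))\le\rho$ and $d(\tilde x^*_2,\overline{N}_{B}(b))\le\rho$; alignment and normalization survive the limit and $\|\tilde x^*_1+\tilde x^*_2\|$ equals the inner $\liminf$. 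Letting $\rho\downarrow0$ yields $I_w$-admissible data of value tending to $\itrd{w}$, hence $I_w\le\itrd{w}$ and (ii) follows. Finally, (iii) is a short corollary: when $\dim X<\infty$ and $A,B$ are convex, the Fréchet and limiting normal cones both reduce to the convex normal cone, so $\overline{N}_{A}=N_{A}$ and $\overline{N}_{B}=N_{B}$, and the expression \eqref{itr"} coincides verbatim with definition \eqref{itrc} of $\itrd{c}$. Thus part (ii) gives $\itrd{w}=I_w=\itrd{c}$, while Proposition~\ref{P9} gives $\itrd{c}=\str$, and together these yield $\itrd{w}=\itrd{c}=\str$.
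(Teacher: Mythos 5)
Your proposal is correct and follows essentially the same route as the paper's proof: the trivial outer bounds and the constant-configuration argument for $\itrd{w}\le\itrd{c}$, the replacement of approximate normals by genuine Fr\'echet normals with rescaling and a nested ("diagonal") choice of parameters for $\itr\le\itrd{w}$, the unit-sphere compactness argument identifying \eqref{itrw} with \eqref{itr"} in finite dimensions, and the reduction of (iii) to the coincidence of Fr\'echet and limiting normal cones plus Proposition~\ref{P9}. The differences are only in level of detail: your $1+o(1)$ cosine and ratio estimates are asserted where the paper carries out the explicit chain of inequalities, while your two-sided argument for (ii) spells out what the paper compresses into one sentence.
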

%\todo[inline]{Can the last claim be extended to general Banach spaces?
%What about a finite dimensional version of \eqref{itr}?}

\begin{proof}
(i)
All three constants are nonnegative by definition and, thanks to the conventions made, never greater than 1.
Definition \eqref{itrc} corresponds to taking $x'=x$, $x_1'=a'=a$ and $x_2'=b'=b$ under the $\liminf$ in \eqref{itrw}.
Hence, $\itrd{w}\le\itrd{c}$.

Next we show that $\itr\le\itrd{w}$.
Let $\itrd{w}<\al<1$ and $\rho>0$.
Choose an $\al'$ with $\itrd{w}<\al'<\al$ and a $\rho'>0$ with
\begin{gather}\label{P10P-3}
\rho'<\min\left\{\frac{\rho}{2},\frac{1}{2}, \frac{\al-\al'}{4}, \frac{\rho(1-\al)}{4}\right\}.
\end{gather}
By definition \eqref{itrw}, there exist $a\in(A\setminus B)\cap\B_{\rho'}(\bx)$, $b\in(B\setminus A)\cap\B_{\rho'}(\bx)$ and $x\in\B_{\rho'}(\bx)$
such that
$\norm{x-a}=\norm{x-b}$,
and
\begin{gather}\label{P10P-1}
\liminf_{\substack{x'\to x,\;x_1'\to a,\;x_2'\to b,\; a'\to a,\;b'\to b\\a'\in A,\;b'\in B,\; \|x'-x_1'\|=\|x'-x_2'\|\\ d(x_1^*,N_{A}(a'))<\rho',\;d(x_2^*,N_{B}(b'))<\rho',\; \|x_1^*\|+\|x_2^*\|=1
\\
\langle x_1^*,x'-x_1'\rangle=\|x_1^*\|\,\|x'-x_1'\|,\;
\langle x_2^*,x'-x_2'\rangle=\|x_2^*\|\,\|x'-x_2'\|}}
\|x^*_1+x^*_2\|<\al'.
\end{gather}
We obviously have $a\ne b$, $x\ne a$ and $x\ne b$.
Choose an $\eps>0$ such that
\begin{gather*}
\eps<d(a,B),\quad \eps<d(b,A), \quad 2\eps\left(1+\frac{4}{1-\al}\left(\rho-\frac{4\rho'}{1-\al}\right)\iv\right) <\norm{x-a},\quad
\frac{4\eps}{\norm{x-a}-2\eps}<\rho,
\\
\|x-\bar x\|+
\varepsilon<\rho',\quad \|a-\bx\|+\varepsilon<\rho',\quad \|b-\bx\|+\varepsilon<\rho'.
\end{gather*}
By \eqref{P10P-1}, there are points $a'\in A\cap\B_\eps(a)$, $b'\in B\cap\B_\eps(b)$, $x_1'\in\B_\eps(a)$, $x_2'\in\B_\eps(b)$, $x'\in\B_\eps(x)$, and $x_1^*,x_2^*\in X^*$ satisfying conditions \eqref{T1-1}, \eqref{T1-2},
\begin{gather}\label{P10P-2}
d(x_1^*,N_{A}(a'))<\rho',\quad d(x_2^*,N_{B}(b'))<\rho' \qdtx{and}\|x^*_1+x^*_2\|<\alpha'.
\end{gather}
Then
\begin{gather}\label{P10P-16}
d(a',B)\ge d(a,B)-\norm{a'-a}>d(a,B)-\eps>0,
\\\label{P10P-17}
d(b',A)\ge d(b,A)-\norm{b'-b}>d(b,A)-\eps>0,
\\\notag
\norm{x'-a'} \le\norm{x-a}+\norm{x'-x}+\norm{a'-a} <\norm{x-a}+2\eps,
\\\notag
\norm{x'-b'} \le\norm{x-b}+\norm{x'-x}+\norm{b'-b} <\norm{x-b}+2\eps,
\\\label{P10P-8}
\norm{x'-a'} \ge\norm{x-a}-\norm{x'-x}-\norm{a'-a} >\norm{x-a}-2\eps > \frac{8\eps}{1-\alpha} \left(\rho-\frac{4\rho'}{1-\al}\right)\iv>0,
\\\label{P10P-12}
\norm{x'-b'} \ge\norm{x-b}-\norm{x'-x}-\norm{b'-b} >\norm{x-b}-2\eps > \frac{8\eps}{1-\alpha} \left(\rho-\frac{4\rho'}{1-\al}\right)\iv>0,
\\\label{P10P-20}
\frac{\norm{x'-a'}}{\norm{x'-b'}} <\frac{\norm{x-a}+2\eps}{\norm{x-b}-2\eps} =1+\frac{4\eps}{\norm{x-a}-2\eps}<1+\rho,
\\\label{P10P-21}
\frac{\norm{x'-a'}}{\norm{x'-b'}} >\frac{\norm{x-a}-2\eps}{\norm{x-b}+2\eps} =1-\frac{4\eps}{\norm{x-a}+2\eps}<1-\rho,
\\\label{P10P-13}
\norm{x'-\bx}\le\norm{x-\bx}+\norm{x'-x} <\norm{x-\bx}+\eps<\rho'<\rho,
\\\label{P10P-14}
\norm{a'-\bx}\le
\norm{a-\bx}+\norm{a'-a} < \norm{a-\bx}+\eps %<\rho'+\norm{x-a}
<\rho'<\rho,
\\\label{P10P-15}
\norm{b'-\bx}\le
\norm{b-\bx}+\norm{b'-b} <\norm{b-\bx}+\eps %<\rho'+\norm{x-b}
<\rho'<\rho,
\\\notag
\big|\|x^*_1\|-\|x^*_2\|\big|\le\|x^*_1-(-x^*_2)\|<\alpha'.
\end{gather}
The last estimate together with the equality
%the first equality in \eqref{T1-2}
$\|x^*_1\|+\|x^*_2\|=1$ yield
\begin{gather}\label{P10P-5}
\|x_1^*\|
%=1-\|x_2^*\|
<\frac{1+\al'}{2},
\quad
\|x_2^*\|
%=1-\|x_1^*\|
<\frac{1+\al'}{2},
\\\label{P10P-4}
\|x_1^*\|>\frac{1-\al'}{2}>0,
\quad
\|x_2^*\|>\frac{1-\al'}{2}>0.
\end{gather}
By \eqref{P10P-2}, there are Fr\'echet normals $v_1^*\in N_{A}(a')$ and $v_2^*\in N_{B}(b')$ such that
\begin{gather}\label{P10P-6}
\norm{x_1^*-v_1^*}<\rho',\quad \norm{x_2^*-v_2^*}<\rho'.
\end{gather}
Hence, by \eqref{P10P-6}, \eqref{P10P-3}, \eqref{P10P-2}, \eqref{P10P-4} and \eqref{P10P-5},
\begin{gather}\label{P10P-9}
\norm{v_1^*}+\norm{v_2^*}\ge\norm{x_1^*}+\norm{x_2^*} -\norm{x_1^*-v_1^*}-\norm{x_2^*-v_2^*}>1-2\rho'>0,
\\\label{P10P-10}
\norm{v_1^*+v_2^*} \le\norm{x_1^*+x_2^*}+\norm{x_1^*-v_1^*}+\norm{x_2^*-v_2^*} <\al'+2\rho',
\\\label{P10P-11}
\norm{v_1^*}>\norm{x_1^*}-\rho' >\frac{1-\al'}{2}-\rho'>\frac{1-\al}{2},\quad
\norm{v_2^*}>\norm{x_2^*}-\rho' >\frac{1-\al'}{2}-\rho'>\frac{1-\al}{2},
\\\label{P10P-7}
\norm{v_1^*}<\norm{x_1^*}+\rho' <\frac{1+\al'}{2}+\rho'<\frac{1+\al}{2},\quad
\norm{v_2^*}<\norm{x_2^*}+\rho' <\frac{1+\al'}{2}+\rho'<\frac{1+\al}{2},
\end{gather}
and
\begin{align*}
\langle v_1^*,x'-a'\rangle
&\ge\langle x_1^*,x'-a'\rangle -\|x_1^*-v_1^*\|\|x'-a'\|
\\
&\ge\langle x_1^*,x'-x_1'\rangle-\|x_1^*\|\|x_1'-a'\| -\|x_1^*-v_1^*\|\|x'-a'\|
\\
&=\|x_1^*\|\|x'-x_1'\|-\|x_1^*\|\|x_1'-a'\| -\|x_1^*-v_1^*\|\|x'-a'\|&& (\text{by \eqref{T1-2}})
\\
&\ge\|v_1^*\|\|x'-x_1'\|-\|x_1^*-v_1^*\|\|x'-x_1'\|
\\
&-\|x_1^*\|\|x_1'-a'\| -\|x_1^*-v_1^*\|\|x'-a'\|
\\
&\ge\|v_1^*\|(\|x'-a'\|-\|x_1'-a'\|)
\\
&-\|x_1^*-v_1^*\|(\|x'-a'\|+\|x_1'-a'\|)
\\
&-\|x_1^*\|\|x_1'-a'\| -\|x_1^*-v_1^*\|\|x'-a'\|
\\
&=(\|v_1^*\|-2\|x_1^*-v_1^*\|)\|x'-a'\|
\\
&-(\|v_1^*\|+\|x_1^*-v_1^*\|+\|x_1^*\|)\|x_1'-a'\|
\\
&>(\|v_1^*\|-2\rho')\|x'-a'\| - \left(\frac{1+\al}{2}+\frac{\al-\al'}{2} +\frac{1+\al'}{2}\right)2\eps&& (\text{by \eqref{P10P-6}, \eqref{P10P-7}, \eqref{P10P-5}, \eqref{P10P-3}})
\\
&=(\|v_1^*\|-2\rho')\|x'-a'\| -(1+\al)2\eps
\\
&>(\|v_1^*\|-2\rho')\|x'-a'\| -4\eps && (\al<1)
\\
&>\left(\|v_1^*\|-2\rho' -\frac{1-\alpha}{2}\rho+2\rho'\right) \|x'-a'\|
&& (\text{by \eqref{P10P-8}})
\\
&=\left(\|v_1^*\| -\frac{1-\alpha}{2}\rho\right) \|x'-a'\|
\\
&>\|v_1^*\|(1-\rho)\|x'-a'\|.
&& (\text{by \eqref{P10P-11}})
\end{align*}
Thus,
\begin{gather*}
\frac{\ang{v_1^*,x'-a'}}{\|v_1^*\|\|x'-a'\|} >1-\rho.
\end{gather*}
Similarly,
\begin{gather*}
\frac{\ang{v_2^*,x'-b'}}{\|v_2^*\|\|x'-b'\|} >1-\rho.
\end{gather*}
Set
\begin{gather*}
\hat x^*_1 =\frac{v_1^*}{\left\|v_1^*\right\|+\left\|v_2^*\right\|}, \quad
\hat x^*_2 =\frac{v_2^*}{\left\|v_1^*\right\|+\left\|v_2^*\right\|}.
\end{gather*}
Then $\hat x^*_1\in N_{A}(a')\setminus\{0\}$, $\hat x^*_2\in N_{B}(b')\setminus\{0\}$, $\left\|\hat x^*_1\right\|+\left\|\hat x^*_2\right\|=1$ and, by \eqref{P10P-9}, \eqref{P10P-10}, \eqref{P10P-3} and
%making use of
the inequality $1+\al<2$, we have
\begin{gather*}
\left\|\hat x^*_1+\hat x^*_2\right\| =\frac{\|v_1^*+v_2^*\|} {\left\|v_1^*\right\|+\left\|v_2^*\right\|} <\frac{\al'+2\rho'}{1-2\rho'} <\frac{\al'+\frac{\al-\al'}{2}} {1-\frac{\al-\al'}{2}} <\frac{\al'+\frac{\al-\al'}{1+\al}} {1-\frac{\al-\al'}{1+\al}}=\al,
\\
\frac{\ang{\hat x_1^*,x'-a'}} {\|\hat x_1^*\|\|x'-a'\|}>1-\rho,
\quad
\frac{\ang{\hat x_2^*,x'-b'}} {\|\hat x_2^*\|\|x'-b'\|}>1-\rho.
\end{gather*}
Hence, recalling \eqref{P10P-16}, \eqref{P10P-17}, \eqref{P10P-8}, \eqref{P10P-12}, \eqref{P10P-13}, \eqref{P10P-14} and \eqref{P10P-15},
\begin{gather*}
\inf\limits_{\substack{a'\in(A\setminus B)\cap\B_\rho(\bx),\; b'\in(B\setminus A)\cap\B_\rho(\bx)\\
x'\in\B_\rho(\bx),\;x'\ne a',\;x'\ne b'\\
\hat x^*_1\in N_{A}(a')\setminus\{0\},\;\hat x^*_2\in N_{B}(b')\setminus\{0\},\;\norm{\hat x^*_1}+\norm{\hat x^*_2}=1
\\
1-\rho<\frac{\norm{x'-a'}}{\norm{x'-b'}}<1+\rho,\; \frac{\ang{\hat x_1^*,x'-a'}} {\|\hat x_1^*\|\|x'-a'\|}>1-\rho,\;
\frac{\ang{\hat x_2^*,x'-b'}} {\|\hat x_2^*\|\|x'-b'\|}>1-\rho}}
\|\hat x^*_1+\hat x^*_2\|<\al,
\end{gather*}
%and consequently, $\itr\le\al$.
The claimed inequality follows after passing to the limits as $\rho\downarrow0$ and $\alpha\downarrow\itrd{w}$.

(ii) If $\dim X<\infty$, then, thanks to the compactness of the unit sphere, the $\liminf$ in \eqref{itrw} reduces to
\begin{gather*}
\inf_{\substack{ d(x_1^*,\overline{N}_{A}(a))\le\rho,\; d(x_2^*,\overline{N}_{B}(b))\le\rho,\; \|x_1^*\|+\|x_2^*\|=1
\\
\langle x_1^*,x-a\rangle=\|x_1^*\|\,\|x-a\|,\;
\langle x_2^*,x-b\rangle=\|x_2^*\|\,\|x-b\|}}
\|x^*_1+x^*_2\|.
\end{gather*}
As a result, the \RHS\ of \eqref{itrw} reduces to that of \eqref{itr"}.

(iii) In the convex case, the limiting and Fr\'echet normal cones coincide, and so do the \RHS s of \eqref{itr"} and \eqref{itrc}.
The second equality is a consequence of Proposition~\ref{P9}.
\qed\end{proof}

The property introduced in Theorem~\ref{T1} as a sufficient dual space characterization of subtransversality and corresponding to the condition $\itrd{w}>0$ as well as the stronger property corresponding to the condition $\itr>0$ are themselves important transversality properties of the pair $\{A,B\}$ at $\bx$.
Borrowing partially the terminology from \cite{DruIofLew15.2}, we are going to call these properties \emph{weak intrinsic transversality} and \emph{intrinsic transversality}, respectively.
%\todo{Can you suggest smth. better?}
\begin{definition}\label{D3+}
Suppose $X$ is a normed linear space, $A,B\subset X$ are closed, and $\bx\in A\cap B$.
The pair $\{A,B\}$ is
\begin{enumerate}
\item
weakly intrinsically transversal at $\bar x$ if $\itrd{w}>0$, i.e., there exist numbers $\alpha\in]0,1[$ and $\delta>0$ such that, for all $a\in(A\setminus B)\cap\B_\de(\bx)$, $b\in(B\setminus A)\cap\B_\de(\bx)$ and $x\in\B_\de(\bx)$
with $\norm{x-a}=\norm{x-b}$,
one has $\|x^*_1+x^*_2\|>\alpha$ for some $\eps>0$ and all $a'\in A\cap\B_\eps(a)$, $b'\in B\cap\B_\eps(b)$, $x_1'\in\B_\eps(a)$, $x_2'\in\B_\eps(b)$, $x'\in\B_\eps(x)$, and $x_1^*,x_2^*\in X^*$ satisfying conditions \eqref{T1-1}, \eqref{T1-2} and \eqref{T1-3};
\item
intrinsically transversal at $\bar x$ if $\itr>0$, i.e., there exist numbers $\alpha\in]0,1[$ and $\delta>0$ such that $\|x^*_1+x^*_2\|>\alpha$
for all $a\in(A\setminus B)\cap\B_\de(\bx)$, $b\in(B\setminus A)\cap\B_\de(\bx)$, $x\in\B_\de(\bx)$, $x_1^*\in N_{A}(a)\setminus\{0\}$ and $x_2^*\in N_{B}(b)\setminus\{0\}$ satisfying
\begin{gather}\label{D3+1}
x\ne a,\quad x\ne b,\quad 1-\delta<\frac{\norm{x-a}}{\norm{x-b}}<1+\delta,
\\\label{D3+2}
\norm{x^*_1}+\norm{x^*_2}=1,\quad\frac{\ang{x_1^*,x-a}} {\|x_1^*\|\|x-a\|} >1-\delta,\quad
\frac{\ang{x_2^*,x-b}} {\|x_2^*\|\|x-b\|} >1-\delta.
\end{gather}
\end{enumerate}
\end{definition}

\begin{remark}\label{R7}
The properties introduced in Definition~\ref{D3+} are less restrictive than the dual criterion of transversality in Theorem~\ref{T0}.
\xqed\end{remark}

In view of Definition~\ref{D3+}, Theorem~\ref{T1} says that in Asplund spaces weak intrinsic transversality (and consequently intrinsic transversality) implies subtransversality.
Thanks to Proposition~\ref{P10}(i) and Remark~\ref{R7}, we have the following chain of implications in Asplund spaces:
\begin{align*}
\text{transversality}
\quad&\Longrightarrow\quad
\text{intrinsic transversality}
\\
&\Longrightarrow\quad
\text{weak intrinsic transversality}
\quad\Longrightarrow\quad
\text{subtransversality}.
\end{align*}
By Proposition~\ref{P10}(iii),
when the space is finite dimensional and the sets are convex, the last two properties are equivalent.
%\todo[inline]{The relationship between intrinsic transversality and weak intrinsic transversality should be further investigated.
%Are they different in general? in finite dimensions? in Euclidean spaces? in the convex case?}

As a consequence of Proposition~\ref{P10}(i), we obtain the following dual sufficient condition of subtransversality of a pair of closed sets in an Asplund space.
It expands and improves \cite[Theorem~4.1]{KruTha15} as well as a more recent result announced without proof in the Euclidean space setting in \cite[Theorem~4(ii)]{KruLukTha}.

\begin{corollary}\label{C0}
Suppose $X$ is Asplund, $A,B\subset X$ are closed, and $\bx\in A\cap B$.
Then $\{A,B\}$ is subtransversal at $\bar x$ if there exist numbers $\alpha\in]0,1[$ and $\de>0$ such that $\|x^*_1+x^*_2\|>\alpha$
for all $a\in(A\setminus B)\cap\B_\de(\bx)$, $b\in(B\setminus A)\cap\B_\de(\bx)$, $x\in\B_\de(\bx)$, $x_1^*\in N_{A}(a)\setminus\{0\}$ and $x_2^*\in N_{B}(b)\setminus\{0\}$ satisfying \eqref{D3+1} and \eqref{D3+2}.
\end{corollary}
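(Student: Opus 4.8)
The plan is to read the hypothesis as exactly the intrinsic transversality condition of Definition~\ref{D3+}(ii) and then to propagate positivity through the comparison inequalities already assembled in Propositions~\ref{P7}, \ref{P8} and \ref{P10}. No new analysis is required: the corollary is designed to be a one-line consequence of the constants $\str$, ${\rm str}_1[A,B](\bar x)$, $\itrd{w}$ and $\itr$ being linearly ordered.

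First I would note that the assumed existence of $\alpha\in]0,1[$ and $\delta>0$ with $\|x^*_1+x^*_2\|>\alpha$ over all admissible configurations $a\in(A\setminus B)\cap\B_\de(\bx)$, $b\in(B\setminus A)\cap\B_\de(\bx)$, $x\in\B_\de(\bx)$, $x_1^*\in N_{A}(a)\setminus\{0\}$, $x_2^*\in N_{B}(b)\setminus\{0\}$ satisfying \eqref{D3+1} and \eqref{D3+2} is, by the very form of definition \eqref{itr}, equivalent to the bound $\itr\ge\alpha$. In particular the hypothesis forces $\itr>0$, i.e.\ $\{A,B\}$ is intrinsically transversal at $\bx$.

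Next I would chain the three estimates. Proposition~\ref{P10}(i) gives $\itr\le\itrd{w}$; since $X$ is Asplund, Proposition~\ref{P8}(ii) gives $\itrd{w}\le{\rm str}_1[A,B](\bar x)$; and Proposition~\ref{P7}(i) gives ${\rm str}_1[A,B](\bar x)\le\str$. Concatenating these yields
\[
\str\ge{\rm str}_1[A,B](\bar x)\ge\itrd{w}\ge\itr\ge\alpha>0.
\]
Because subtransversality of $\{A,B\}$ at $\bx$ is precisely the condition $\str>0$ (the quantitative characterization recorded after Definition~\ref{D1}), the conclusion is immediate.

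I do not anticipate any genuine obstacle, since all the analytic work---the Ekeland/fuzzy-sum-rule argument converting primal slopes into dual normal-cone estimates in Proposition~\ref{P8}, and the delicate passage from the two-limit constant $\itrd{w}$ to the one-limit Fr\'echet-normal constant $\itr$ in Proposition~\ref{P10}---has already been carried out. The only point meriting care is interpreting the hypothesis correctly as the uniform lower bound $\itr\ge\alpha$, so that the $\liminf$ in \eqref{itr} is bounded below rather than being read as a statement about a single fixed configuration; once that is observed, the ordering of the four constants does the rest.
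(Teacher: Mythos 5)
Your proof is correct and follows essentially the same route as the paper: the hypothesis is precisely the statement $\itr\ge\al>0$, and the chain $\itr\le\itrd{w}\le{\rm str}_1[A,B](\bar x)\le\str$ obtained from Propositions~\ref{P10}(i), \ref{P8}(ii) and \ref{P7}(i) is exactly how the paper derives the corollary, the only cosmetic difference being that the paper packages the last two inequalities as Theorem~\ref{T1} (weak intrinsic transversality implies subtransversality) rather than citing them separately.
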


%The next two sections are dedicated to the study of the intrinsic transversality property.

%\bibliographystyle{spmpsci}
%%\bibliography{KLN,KLN-AK}
%\bibliography{buch-kr,kruger,kr-tmp}

\section*{Acknowledgements}
The authors thank the referees for the careful reading of the manuscript and constructive comments and suggestions.

\end{document}